\newtheorem{theorem}{Theorem}[section]
\newtheorem{proposition}[theorem]{Proposition}
\newtheorem{corollary}[theorem]{Corollary}
\newtheorem{lemma}[theorem]{Lemma}
\newtheorem{main lemma}[theorem]{Main Lemma}
\theoremstyle{definition}
\newtheorem{definition}{Definition}
\newtheorem{remark}{Remark}
\newcommand{\R}{\mathbb{R}}
\newcommand{\Z}{\mathbb{Z}}
\newcommand{\N}{\mathbb{N}}
\def\beq{\begin{equation}}
\def\eeq{\end{equation}}
\def\pa{\partial}
\def\t{\theta}
\def\d{\delta}
\def\wt{\widetilde}
\def\wh{\widehat}
\def\f{\varphi}
\def\l{\lambda}
\def\a{\alpha}
\def\n{\nabla}
\def\o{\omega}
\def\eps{\varepsilon}
\def\r{\rho}
\def\wc{\rightharpoonup}
\newcommand{\ddfrac}[2] {\frac{\displaystyle #1 }{\displaystyle #2} }
\DeclareMathOperator{\dist}{dist}
\DeclareMathOperator{\ind}{Ind}
\DeclareMathOperator{\loc}{loc}
\title[Avoiding collisions with topological constraints]{Avoiding collisions under topological constraints in variational problems coming from celestial mechanics}
\author{Nicola Soave and Susanna Terracini}
\address{
\hbox{\parbox{5.7in}{\medskip\noindent
 Nicola Soave\\
Mathematisches Institut, \\
Justus-Liebig-Universit\"at Gie{\ss}en,\\
Arndtstra{\ss}e 2, 35392 Gie{\ss}en (Germany)\\[2pt]
{\em{E-mail address: }}{\tt nicola.soave@gmail.com.}\\[5pt]
 S. Terracini\\
 Dipartimento di Matematica ``Giuseppe Peano'',\\
Universit\`a di Torino, \\
Via Carlo Alberto 10,
10123 Torino (Italy). \\[2pt]
                                     \em{E-mail address: }{\tt susanna.terracini@unito.it.}}}
}
\date{\today}
\thanks{{\it Keywords:} Collisions, collision free acion minimizing paths, Bolza problems, Levi-Civita regularization.\\
\indent{2010} {\it Mathematics Subject Classification:} 70F16, 49A10, 70F05,  (70F10, 70K05)\\
\indent The authors are partially supported through the project ERC Advanced Grant 2013 n. 339958 ``Complex Patterns for Strongly Interacting Dynamical Systems - COMPAT". 
 }
\begin{document}
\maketitle



%

\begin{flushright}
\emph{To Yvonne Choquet-Bruhat with great admiration}
\end{flushright}
\begin{abstract}
In a singular potential setting, we generalize a method which allows to show that minimizers under topological constraints of the action functional (or of the Maupertuis') are collision-free. This methods applies to $3$-dimensional problems of celestial mechanics exhibiting a particular cylindrical symmetry, as well as to planar problems of $N$-centre type, where it gives optimal results.
\end{abstract}

\section{Introduction and main results}\label{sec: intro}

In the $N$-body problem, a \emph{collision trajectory} $q(t)=(q_1(t),\dots,q_N(t))$ has at least one instant $\bar t$ such that $q_i(\bar t)=q_j(\bar t)$ for some $i \neq j$. As collisions represent the main obstruction in order to apply minimization arguments, in the last decades several arguments have been developed to insure that an action minimizing path is free of collisions, starting from \cite{SeTe,SeTe2,Ta1,Ta2} . One of the most powerful tools has to be ascribed to Marchal, see \cite{Chencin,Mar}, and can be summarized in the following statement: let $Q_1, Q_2$ two different configurations in the $N$-body problem, and let $H(Q_1,Q_2,T)$ be the space of $H^1$-functions connecting $Q_1$ and $Q_2$ in a given time $T$: then,  any action minimizer in $H(Q_1,Q_2,T)$ is collision-free. It is instructive to recall the sketch of the proof. Let $\bar q$ be a minimizing collision trajectory; let us construct a family of rigid variations, parametrized over the sphere, moving one of the colliding particles away from the collision. While  it may be hard, or even impossible,  to evaluate the action variation for a single element of the family, it is fairly easy to estimate the average of the perturbed action over all the perturbations, and show that this average is smaller than the action of $\bar q$, concluding that $\bar q$ cannot be a minimizer. This idea has been widely generalized in \cite{Fe, FeTe} in order to include symmetries, several potentials,  and a large numer of applications to the search of periodic solutions to the $N$-body problem. Very general results on the absence of collisions for minimizers of the Bolza problem are reported in \cite{BaFeTe08}.

On the other hand, when looking for new selected trajectories in Celestial Machanics, one often seeks minimizers of the action in some set of functions sharing a prescribed topological behaviour, as, for example, in \cite{Chen1,Chen2,FuGr,FuGrNe,knauf1992,Shy,TeVe,Vent}. In such a situations, Marchal's lemma cannot be employed because the average argument may destroy the topological constraint, and this, usually, makes impossible to deduce any conclusive information. As a typical example, one can think at the hip-hop trajectories constructed in \cite{TeVe}, where the authors introduced a method, adapted also in different situations in \cite{Ca,SoTe}, to prove that minimizers of the action functional (or of the Maupertuis' one, which we define in what follows) are, under suitable topological constraints, collision-free.  The existence of collision free action minimizing periodic trajectories with nontrivial homotopy type has been recently linked to the threshold of existence of minimizing parabolic trajectories in \cite{BaTeVe}. Here we aim at giving further generalizations and unified approaches to the different problems already considered in the quoted literature. The method we are going to describe, which is topological in nature, provides optimal results in the planar case, even if it works also for some $3$-dimensional problems, under stronger assumptions. In order to avoid misunderstanding, we present separately the spatial problem and the planar one. In both the situations, we will deal with a configuration space which is not simply connected; we remark that we can deal with both the action functional and the Maupertuis' one without substantial differences. 

Our main results are stated in a form as general as possible, in order to obtain tools with a wide applicability. This fact led us to consider a setting which may appear quite far away from concrete problems. This is not the case, as we show presenting two applications at classical singular problems, in Section \ref{sec: appl}.


\subsection{Main result for the $3$-dimensional problem}

Let $V\in \mathcal{C}^1(\R^3 \setminus \Sigma)$, where $\Sigma$ is the union of a finite number of infinite straight lines $r_1,\dots,r_m$. We assume that $V \ge 0$ and $V(q) \to +\infty$ as $\dist(q,\Sigma) \to 0$.

We search for classical solution of the motion equation
\begin{equation}\label{intro, motion eq}
 \ddot{q}(t)= \nabla V(q(t)).
\end{equation}

For a fixed $k$, let $\Pi_k$ be a plane orthogonal to $r_k$, and let us consider the splitting $\R^3 = \Pi_k \oplus r_k$. Using, as usual, the complex notation for points of $\R^2$, we introduce a system of cylindrical coordinates $u^k= \rho^k \exp\{i \t^k\}$ in the plane $\Pi_k$, and $z^k$ on the line $r_k$, in such a way that 
\begin{itemize}
\item $q\in \{z^k=0\}$ if and only if $q \in \Pi_k$;
\item $q \in \{u^k=0\}$ if and only if $q \in r_k$;
\item chosen an orientation for the plane $\Pi_k$, the angle $\t^k$ is counted in counterclockwise sense.
\end{itemize}
We often omit the index $k$ when there is not possibility of misunderstanding, to simplify the notation. 

It is possible to define the concept of angle with respect to the axis $r_k$ for an ordered pair of points $p_1,p_2 \in \R^3 \setminus r_k$; firstly, up to a rotation we can assume that the angular component of $p_1$ is $0$. In these coordinates,
\[
p_1 = (\rho_1 \exp\{i 0\},z_1) \quad \text{and} \quad p_2 = (\rho_2 \exp\{i \theta_2\},z_2),
\]
where it is not restrictive to choose $\theta_2 \in [0,2\pi)$.
\begin{definition}\label{def: angle}
We say that $\t_2$ is \emph{the angle between $p_
1$ and $p_2$ with respect to the axis $r_k$}. 
\end{definition}

Let us consider, for any $k=1,\ldots,m$, a neighbourhood $\Xi_k$ of the line $r_k$ of type
\[
\Xi_k=\{ p \in \R^3: \dist(p,r_k)<d_k\}.
\]
Let $p_1,p_2 \in \partial \Xi_k$; we define
\[
\wh{K}= \wh{K}_{p_1 p_2}\left([a,b]\right) := \left\{ q \in H^1([a,b],\R^3) \left| \begin{array}{l}
q(a)=p_1,\  q(b)=p_2,  \\
q(t) \in \Xi_k \setminus r_k \text{ for every $t\in (a,b)$} \end{array}\right.\right\rbrace ,
\]
The set $\wh{K}$ has several connected components, which can be determined according to the winding number of their elements with respect to the axis $r_k$. To compute the rotation number of $q \in \wh{K}$ we consider the projection $u^k:[a,b] \to \overline{\Xi}_k \cap \Pi_k$ of $q$ on the plane $\Pi_k$; as $q \in \wh{K}$, it is well defined the usual winding number
\begin{equation}\label{usual rot number}
\ind(u^k,0):= \int_{u^k([a,b])} d \theta^k.
\end{equation}
Clearly, it results $\ind(u^k,0) = \hat \theta + 2 l \pi$ for some $l \in \Z$, where $\hat \theta$ is the angle between $p_1$ and $p_2$ with respect to the axis $r_k$.
\begin{definition}\label{def: winfding number}
We define the \emph{rotation number of $q \in \wh{K}_{p_1 p_2}([a,b])$ with respect to the axis $r_k$} as 
\[
\ind(q,r_k):=  \ind(u^k,0) \in \hat \theta + 2\pi\Z,
\]
where the right hand side has been defined by \eqref{usual rot number}.
\end{definition}
A connected component of $\wh{K}$ is of type
\[
\wh{K}_l=\wh{K}_l^{p_1 p_2}([a,b]) := \left\{ q \in \wh{K}: \ind(q,r_k)=\hat \theta +2l\pi\right\},
\]
where $l \in \Z$ is an arbitrary integer number. We denote with $K_l=K_l^{p_1 p_2}([a,b])$ the closure of $\wh{K}_l$ with respect to the weak topology of $H^1([a,b],\R^3)$; since the weak $H^1$ convergence implies the uniform one, $K_l \setminus \wh{K}_l$ consists in collision functions and functions parametrizing paths leaning on the boundary $\pa \Xi_k$.

\medskip

We now specify the shape of the potentials which we can deal with. It is convenient to introduce the following definition.

\begin{definition}
let $\gamma=q([c,d])$, where $q:[c,d] \to \R^3$ is continuous. Let us consider the expression $q=(u^k,z^k)$, for some $k =1,\dots, m$. Let
\[
\rho^\gamma:= \max \left\{|u^k(t)|: t \in [c,d] \right\},
\]
and let
\[
z^\gamma_{\min}:= \min \left\{z^k(t): t \in [c,d] \right\} \quad \text{and} \quad z^\gamma_{\max}:= \max \left\{z^k(t): t \in [c,d] \right\}.
\]
A \emph{cylindrical neighbourhood} of $\gamma$ is a set of type
\[
\left\{p=(u^k,z^k) \in \Xi_k: \text{ $|u^k|<\rho^\gamma +\delta$ and $z^k \in \left( z^\gamma_{\min}-\delta, z^\gamma_{\max}+\delta \right)$}\right\},
\]
for some $\delta>0$.
\end{definition}

\begin{definition}\label{def: local interaction}
Let $V: \R^3 \setminus \Sigma \to \R$, and let $q \in H^1([a,b],\R^3)$. We say that \emph{the interaction between $q$ and $\Sigma$ is locally axially Keplerian} if for any $k=1,\dots,m$ there exists a neighbourhood $\Xi_k= \{p \in \R^3: \dist (p,r_k)<d_k\}$ of $r_k$ such that for every connected component $\gamma \subset \overline{\Xi}_k \cap q([a,b])$ there exists a cylindrical bounded neighbourhood $\Gamma \subset \overline{\Xi}_k$ of $\gamma$ such that
\[
V(q)= \frac{m_k}{\alpha_k |u^k|^{\alpha_k}} + V_0(|u^k|, z^k) \qquad \forall q = (u^k,z^k) \in \Gamma,
\]
where $m_k>0$, $\a_k \in (0,2)$, and $V_0 \in \mathcal{C}^1(\overline{\Gamma})$.
\end{definition}

\begin{remark}\label{rem: su V 3D}
$i$) To understand the meaning of this definition, it is useful to think at the following situation. Let us write $\R^3 \ni q= (u,z) \in \R^2 \times \R$, and let $V$ be globally defined by 
\[
\bar V(q) = \frac{m}{\alpha |u|^\alpha} + \bar V_0 (|u|,z),
\] 
where $V_0 \in W^{1,\infty}(\R_+ \times \R) \cap \mathcal{C}^1(\R_+ \times \R)$ and $m>0$. Clearly, the interaction between $q$ and $\bar V$ is (locally) axially Keplerian for any $q \in H^1([a,b],\R^3)$. To assume that the interaction between $q \in H^1([a,b],\R^3)$ and a potential $V$ is locally axially Keplerian means that, at least locally when $q$ approaches the singular set $\Sigma$, the potential acts on $q$ as $\bar V$. \\
$ii$) Since $\bar V_0$ depends on $u$ only through $|u|$, if $q \in H^1([a,b],\R^3)$ is a solution of $\ddot{q}(t) = \nabla \bar V(q(t))$ in $(a,b)$, then the angular momentum of $q$ with respect to the axis $\{u=0\}$, which we define as $u \land \dot{u}$, is constant in $(a,b)$. 
$iii$) Let $h \in \R$. As $\lim_{\dist(q,\Sigma) \to 0} V(q)=+\infty$, it is possible, if necessary, to replace $d_k$ with a smaller quantity, in such a way that 
\[
\overline{\Xi}_k \subset \left\{q \in \R^3: V(q) +h \ge C > 0 \right\},
\]
for some $C>0$, for every $k$. The set on the left hand side, called \emph{the Hill's region}, is the set in which any solution to equation \eqref{intro, motion eq} having energy $h$ is confined. 
\end{remark}

We now recall the definition of the functionals we deal with. For $[a,b] \subset \R$ fixed, the \emph{action functional} $\mathcal{A}_{[a,b]}: H^1([a,b],\R^3) \to \R \cup \{+\infty\}$ is
\[
\mathcal{A}_{[a,b]}(q):= \int_a^b \left(\frac{1}{2} |\dot{q}(t)|^2 + V(q(t)) \right)\,dt.
\]
For $h \in \R$, the \emph{Maupertuis' functional} $\mathcal{M}_h([a,b];\cdot):H^1([a,b],\R^3)\to \R \cup \{+\infty\}$ is
\[
\mathcal{M}_h([a,b];u):= \frac{1}{2} \int_a^b |\dot{q}(t)|^2\,dt \int_a^b\left(V(q(t))+h \right)\,dt.
\]
We often write $\mathcal{M}_h$ instead of $\mathcal M_h\left([a,b];\cdot\right)$ when there is not be possibility of misunderstanding.

Let us consider a boundary value problem associated to equation \eqref{intro, motion eq}; typical examples are the periodic problem and the fixed ends one. It is well known that collision-free critical points of $\mathcal A_{[a,b]}$ in suitable subsets of $H^1([a,b],\R^3)$ (the choice of the subset will depend on the particular boundary value problem we are considering) are classical solution of \eqref{intro, motion eq} in the time interval $[a,b]$. Also, collision-free critical points of $\mathcal M_h$ at a positive level, suitably re-parametrized, are classical solution of \eqref{intro, motion eq} with energy $h$. We refer to the appendix at the end of the paper for more details. 

\medskip
 
We are ready to state the first of our main results.

Let $V: \R^3 \setminus \Sigma \to \R$, and let $\bar q \in H^1([a,b],\R^3)$. Assume that the interaction between $\bar q$ and $\Sigma$ is locally axially Keplerian, and let $\Xi_k$ the neighbourhood of $r_k$ given by Definition \ref{def: local interaction}. Let $\gamma= \bar q([c,d])$ be a connected component of $\overline{\Xi}_k \cap \bar q([a,b])$, for some $k = 1,\dots,m$.
Let $\hat \theta$ be the angle between $\bar q(c)$ and $\bar q(d)$ with respect to the axis $r_k$, as introduced in Definition \ref{def: angle}. Note that $\bar q|_{[c,d]} \in K_l$ for some $l \in \Z$. 

\begin{theorem}\label{thm: main 1}
In the previous setting, assume that $\bar q|_{[c,d]}$ is a minimizer of the action functional $\mathcal A_{[c,d]}$ or of the Maupertuis' functional $\mathcal M_h$ (for some $h \in \R$) in $K_l$. If $\hat \theta + 2l\pi \in \left(0,2\pi/(2-\alpha_k)\right)$, then $\bar q$ has no collision with $r_k$ in $[c,d]$. 
\end{theorem}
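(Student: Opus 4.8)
\emph{Proof idea.} The plan is to argue by contradiction: suppose $\bar q|_{[c,d]}$ has a collision with $r_k$, say at $\bar t\in(c,d)$ (the endpoints lie on $\partial\Xi_k$ and are collision-free), and then produce a collision-free competitor in the same class $K_l$ with strictly smaller action $\mathcal A_{[c,d]}$ (resp.\ Maupertuis' functional $\mathcal M_h$), contradicting minimality. The first move is to reduce the local picture to a radial one. Writing $\bar q=(u^k,z^k)$, in the neighbourhood $\Gamma$ the potential equals $\frac{m_k}{\alpha_k|u^k|^{\alpha_k}}+V_0(|u^k|,z^k)$ with $V_0\in\mathcal C^1$ bounded and the singular term $\alpha_k$-homogeneous and central in $u^k$. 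Since $V_0$ depends on $u^k$ only through $|u^k|$, the angular momentum $u^k\wedge\dot u^k$ is conserved along every collision-free subarc (Remark \ref{rem: su V 3D}(ii)); because $\alpha_k<2$, the centrifugal term $|u^k\wedge\dot u^k|^2/(2|u^k|^2)$ dominates $\frac{m_k}{\alpha_k}|u^k|^{-\alpha_k}$ as $|u^k|\to0$, so energy conservation forces $u^k\wedge\dot u^k=0$ on every subarc adjacent to the collision. Hence near $\bar t$ the planar projection $u^k$ is asymptotically radial, reaching and leaving the puncture along definite rays, and the winding of $\bar q|_{[c,d]}$ is produced by the directional jumps at the collision(s).

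I would then explain the threshold through a conical geometry. Freezing the bounded contributions, the Jacobi--Maupertuis length element of energy $h$ is, near the puncture and up to a positive constant, $\sqrt{|u^k|^{-\alpha_k}}\,\sqrt{\mathrm d|u^k|^2+|u^k|^2\,\mathrm d\theta^2}$. The change of variables $\sigma=\frac{2}{2-\alpha_k}|u^k|^{1-\alpha_k/2}$, $\psi=(1-\alpha_k/2)\theta$ turns this, up to a constant, into the flat metric $\mathrm d\sigma^2+\sigma^2\,\mathrm d\psi^2$: the puncture becomes the apex of a flat cone of total angle $\pi(2-\alpha_k)$, and a collision is precisely a geodesic reaching the apex. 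Developing the cone in its universal cover, two points at conical distance $\sigma_0$ whose prescribed winding corresponds to an intrinsic angular displacement $\psi_0=(1-\alpha_k/2)(\hat\theta+2l\pi)$ can be joined either by the through-apex path, of length $2\sigma_0$ independently of $\psi_0$, or, when $\psi_0<\pi$, by the apex-avoiding straight segment, of length $2\sigma_0\sin(\psi_0/2)<2\sigma_0$. The condition $\psi_0<\pi$ is exactly $\hat\theta+2l\pi<\frac{2\pi}{2-\alpha_k}$, the hypothesis of the theorem.

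It remains to globalise the comparison and to absorb the lower-order terms. Concretely, I would excise the portion of $\bar q$ contained in $\{|u^k|\le\eps\}$ around $\bar t$ and replace it by the apex-avoiding geodesic joining the same endpoints in $\Pi_k$, keeping the same $z^k$-profile and realising the same angular increment; by construction the winding, hence the class $K_l$, is preserved and the boundary data at $c,d$ are untouched. To transfer the model computation to the genuine functional I would blow up at the collision through the homogeneity-adapted rescaling $u^k(\bar t+\l^{1+\alpha_k/2}\tau)=\l\,v(\tau)$, under which the $\alpha_k$-homogeneous action is invariant of order $\l^{1-\alpha_k/2}$, while the contributions of $V_0$, of the energy $h$, and of the longitudinal variable $z^k$ are of strictly higher order in $\l$ and hence negligible. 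Consequently the strict leading-order gain $2\sigma_0\sin(\psi_0/2)<2\sigma_0$ survives for $\eps$ small, producing the desired strict decrease of $\mathcal A_{[c,d]}$ (resp.\ $\mathcal M_h$) and the contradiction.

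The main obstacle is exactly this last, perturbative step. One must make rigorous the asymptotic (Sundman--Sperling type) estimates at an isolated collision --- existence of the limiting collision and ejection directions and the parabolic rate $|u^k(t)|\sim C|t-\bar t|^{2/(2+\alpha_k)}$ --- and then perform the excision so that the competitor is genuinely $H^1$, matches the prescribed endpoints, and lies in the very same connected component $K_l$, all while controlling the errors due to $V_0$, to $h$, and to the coupling with $z^k$ so that they stay below the strictly negative leading gain furnished by the cone inequality. Collisions accumulating at $c$ or $d$, and the presence of several isolated collisions, are then handled by applying the same local surgery to each of them.
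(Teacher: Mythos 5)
Your argument reaches the same threshold as the paper but by a genuinely different route. The paper never builds an explicit competitor: after showing that collisions are isolated (Lemma \ref{collisions isolated}, via a Lagrange--Jacobi identity) and that the angular momentum vanishes, so the angle is piecewise constant (Lemma \ref{lem: angolo costante}), it introduces the obstacle levels $d(\eps)$, proves continuity of $d$ at $0$ and uniqueness of the collision minimizer, and derives the contradiction from Proposition \ref{teorema 2.7}: a blow-up of the obstacle minimizers at scale $\eps_n^{(\alpha+2)/2}$ converges in $\mathcal{C}^1_{\loc}$ to a zero-energy $\alpha$-Kepler orbit with nonzero angular momentum, whose total angular variation is at least $2\pi/(2-\alpha_k)$, incompatible with the monotone angle $\hat\theta_1<2\pi/(2-\alpha_k)$ imposed by the class. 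Your cone picture is the dual, direct-deformation version of the same fact --- the parabolic orbit is precisely the development of a straight line on the cone of opening $\pi(2-\alpha_k)$, and its apsidal angle $\pi/(2-\alpha_k)$ is the paper's integral $\int_0^1 d\xi/\sqrt{\xi^\alpha-\xi^2}$ --- and your order count does close: the gain is of order $\eps^{(2-\alpha_k)/2}$ while the errors from $V_0$, $h$ and the $z$-coupling are $O\left(\eps^{(2+\alpha_k)/2}\right)+O(\eps)$, both of higher order. What your sketch defers, and what the paper's architecture is built to supply, are three real pieces of work: (i) isolation of collisions must be established \emph{before} any local surgery (Sundman--Sperling asymptotics presuppose it, and ``applying the surgery to each of them'' does not handle an accumulation point in the interior); (ii) membership of the competitor in $K_l$ is not automatic, since a collision path lies in the weak closure of several components at once --- one needs the rearrangement of the angular jumps in \eqref{change the angle}, which uses the rotational invariance of $V$, to guarantee that the jump being replaced is the one lying in $\left(0,\hat\theta+2l\pi\right]$, so that the chord sweeping exactly that angle keeps the class; (iii) the passage between $\mathcal A$, $\mathcal M_h$ and the additive Jacobi length $\mathcal L_h$ (Propositions \ref{minimo M<->L} and \ref{localizzazione dei minimi_M}) is what legitimizes a purely local length comparison. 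Granting these, your approach yields a correct and arguably more geometric proof of Theorem \ref{thm: main 1}; the paper's obstacle-plus-blow-up formulation buys the extra mileage exploited later, where $V_0$ need not be radial and the case $\alpha_k=1$ requires tracking the limiting orbit itself rather than just a length inequality.
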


\subsection{Main results in the planar case}\label{sub: intro planar}

In the planar case, we assume that $V \in \mathcal{C}^1(\R^2 \setminus \Sigma)$, where $\Sigma$ is the union of a finite number of points $c_1,\dots,c_m$; we suppose that $V \ge 0$ and $V(u) \to +\infty$ as $\dist(u,\Sigma) \to 0$.

We keep here the same notation introduced above for the sets of functions and for the action and the Maupertuis' functionals. 

\medskip

Let us fix $k=1,\dots,N$. For $p_1,p_2 \in \R^2 \setminus \{c_k\}$,  we introduce a system of polar coordinates $u=c_k+\rho^k \exp\{i\t^k\}$ centred in the point $p_k$. Up to a rotation we can assume that the angular coordinate of $p_1$ is $0$, so that 
\[
p_1 = c_k+\rho_1^k \exp\{i 0\} \quad \text{and} \quad p_2 = c_k+\rho_2^k \exp\{i \theta_2^k\},
\]
where it is not restrictive to choose $\theta_2 \in [0,2\pi)$. We often omit the index $k$.
\begin{definition}\label{def: angle planar}
We say that $\theta_2$ is \emph{the angle between $p_1$ and $p_2$ with respect to the pole $c_k$}.
\end{definition}

Let $\Xi_k=\{\dist(u,c_k)<d_k\}$ (for some $d_k>0$). Let $p_1,p_2 \in \pa \Xi_k$. We define $\wh{K}_{p_1 p_2}([a,b])$ as the set of $H^1([a,b],\R^2)$ collision-free functions connecting $p_1$ and $p_2$.

\begin{definition}
We define the \emph{rotation number of $u \in \wh{K}_{p_1 p_2}([a,b])$ with respect to the pole $c_k$} as 
\[
\ind(u,c_k):=  \int_{u([a,b])} d\t^k \in \hat \theta + 2\pi\Z,
\]
where $\hat \theta$ is the angle between $p_1$ and $p_2$ with respect to the pole $c_k$.
\end{definition}
A connected component of $\wh{K}$ is of type
\[
\wh{K}_l=\wh{K}_l^{p_1 p_2}([a,b]) := \left\{ u \in \wh{K}: \ind(u,c_k)=\hat \theta +2l\pi\right\},
\]
where $l \in \Z$ is an arbitrary integer number. We denote with $K_l=K_l^{p_1 p_2}([a,b])$ the closure of $\wh{K}_l$ with respect to the weak topology of $H^1([a,b],\R^3)$.

\medskip

As far as the functional $V$, we replace the notion of being locally axially Keplerian with the following.

\begin{definition}\label{def: planar kep}
Let $V: \R^2 \setminus \Sigma \to \R$, and let $u \in H^1([a,b],\R^2)$. We say that \emph{the interaction between $u$ and $\Sigma$ is locally Keplerian} if for any $k=1,\dots,m$ there exists a neighbourhood $\Xi_k= \{\dist(u,c_k)<d_k\}$ of $c_k$ such that, for every connected component $\gamma \subset \overline{\Xi}_k \cap u([a,b])$, it results
\[
V(u)= \frac{m_k}{\alpha_k |u-c_k|^{\alpha_k}} + V_0(u) \qquad \forall u \in \Xi_k,
\]
where $m_k>0$, $\alpha_k \in (0,2)$, and $V_0 \in \mathcal{C}^1\left(\overline{\Xi}_k\right)$.
\end{definition}

\begin{remark}
We point out that, with respect to the $3$-dimensional case, we do not require that $V_0$ depends on $u$ only through its radial component $\rho^k$. 
\end{remark}

Firstly, we can recover the natural extension of Theorem \ref{thm: main 1} in the planar case. Let $\bar u \in H^1([a,b],\R^2)$. Assume that the interaction between $\bar u$ and $\Sigma$ is locally Keplerian, with the further assumption $V_0=V_0(|u-c_k|)$, and let $\Xi_k$ be the neighbourhood of $c_k$ given by Definition \ref{def: planar kep}. Let $\gamma= \bar u([c,d])$ be a connected component of $\overline{\Xi}_k \cap \bar u([a,b])$, for some $k = 1,\dots,m$. Let $\hat \theta$ be the angle between $\bar u (c)$ and $\bar u(d)$ with respect to the centre $c_k$, as introduced in Definition \ref{def: angle planar}. Note that $\bar u|_{[c,d]} \in K_l$ for some $l \in \Z$.

\begin{corollary}\label{thm: main 2}
In the previous setting, assume that $\bar u|_{[c,d]}$ is a local minimizer of $\mathcal A_{[c,d]}$ or of $\mathcal M_h$ (for some $h \in \R$) in $K_l$. If $\hat \theta + 2l\pi \in \left(0,2\pi/(2-\alpha_k)\right)$, then $\bar u$ has no collision in $c_k$ in $[c,d]$. 
\end{corollary}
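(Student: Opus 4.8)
The plan is to deduce the statement directly from Theorem~\ref{thm: main 1}, by lifting the planar problem to a $3$-dimensional one in which the extra coordinate is inert. Realize $\R^2$ as a plane $\Pi_k \subset \R^3 = \Pi_k \oplus r_k$, with $r_k$ the line through $c_k$ orthogonal to $\Pi_k$, and introduce on $\R^3 \setminus r_k$ the potential $\wt V(u,z) := V(u)$. The decisive point is that, because of the additional hypothesis $V_0 = V_0(|u - c_k|)$, in a cylindrical neighbourhood of $\gamma$ one has
\[
\wt V(u,z) = \frac{m_k}{\alpha_k |u^k|^{\alpha_k}} + V_0(|u^k|),
\]
so that the interaction between the lifted curve and $r_k$ is locally axially Keplerian in the sense of Definition~\ref{def: local interaction}: the radial dependence of $V_0$ is exactly what is needed to match the $3$-dimensional structure, where $V_0$ is allowed to depend on $u$ only through $|u^k|$. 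Setting $\bar q(t) := (\bar u(t),0)$, the projection of $\bar q$ on $\Pi_k$ is $\bar u$ itself, whence the angle between $\bar q(c)$ and $\bar q(d)$ with respect to $r_k$ equals $\hat\theta$, the rotation number satisfies $\ind(\bar q, r_k) = \ind(\bar u, c_k) = \hat\theta + 2l\pi$, and $\gamma$ lifts to a connected component of $\overline{\Xi}_k \cap \bar q([a,b])$ (the solid cylinder $\Xi_k$ meets $\Pi_k$ in the planar disc $\{|u-c_k|<d_k\}$). In particular $\bar q|_{[c,d]}$ belongs to the same class $K_l$ and the admissibility window $\left(0, 2\pi/(2-\alpha_k)\right)$ is unchanged.

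Next I would verify that $\bar q$ is a local minimizer of the $3$-dimensional functional over $K_l$ with fixed endpoints $p_1 = (\bar u(c),0)$ and $p_2 = (\bar u(d),0)$. Since $\wt V$ does not depend on $z$, both functionals split: writing $\mathcal F$ for either $\mathcal A_{[c,d]}$ or $\mathcal M_h$, and letting the subscripts $2$ and $3$ denote the planar and spatial versions, one has
\[
\mathcal A_{3}(u,z) = \mathcal A_{2}(u) + \tfrac12 \int_c^d |\dot z|^2\,dt, \qquad \mathcal M_{3}(u,z) = \left(\tfrac12\int_c^d \bigl(|\dot u|^2 + |\dot z|^2\bigr)\,dt\right)\int_c^d \bigl(V(u)+h\bigr)\,dt .
\]
Now take any competitor $q = (u,z) \in K_l$ close to $\bar q$. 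Its projection $u$ is close to $\bar u$ and lies in the planar $K_l$: the projection on $\Pi_k$ is linear and weakly continuous, hence maps $\wh{K}_l$ into the planar $\wh{K}_l$ (it preserves the winding number, and $q(t) \in \Xi_k \setminus r_k$ is equivalent to $u(t) \neq c_k$) and carries the weak closure into the weak closure. The endpoints force $z(c)=z(d)=0$, so the additional $z$-kinetic term is nonnegative; moreover $\int_c^d (V(u)+h)\,dt > 0$ because the competitors remain in $\overline{\Xi}_k \subset \{V + h \ge C > 0\}$ by Remark~\ref{rem: su V 3D}$(iii)$. Hence, in both cases, $\mathcal F_{3}(u,z) \ge \mathcal F_{2}(u) \ge \mathcal F_{2}(\bar u) = \mathcal F_{3}(\bar q)$, the central inequality being the local minimality of $\bar u$. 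Thus $\bar q$ is a local minimizer in the $3$-dimensional $K_l$.

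It then remains to apply Theorem~\ref{thm: main 1} to $\bar q$: since $\hat\theta + 2l\pi \in \left(0, 2\pi/(2-\alpha_k)\right)$, we conclude that $\bar q$ has no collision with $r_k$ on $[c,d]$. Unwinding the lift, this means $u^k(t) \neq 0$, i.e. $\bar u(t) \neq c_k$, for every $t \in [c,d]$, which is exactly the claim. (Theorem~\ref{thm: main 1} is stated for minimizers, but its proof, being based on variations supported near the collision instant and preserving the rotation number, applies equally to local minimizers.)

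The main obstacle is the minimality-transfer step: one must rule out that enlarging the configuration space from $\R^2$ to $\R^3$ produces admissible competitors of strictly smaller cost, which would destroy the minimality of $\bar q$ and make Theorem~\ref{thm: main 1} inapplicable. This is controlled precisely by the two structural features exploited above — the radial assumption on $V_0$, which eliminates every $z$-dependence of the potential and yields the splitting, and the fixed endpoints $z(c)=z(d)=0$, which force the optimal lift to be planar. A secondary technical point, to be handled with care, is that the projection onto $\Pi_k$ genuinely sends the spatial $K_l$ into the planar $K_l$, i.e. that it preserves both the rotation number and the weak closure; this follows from the weak continuity of the projection and from the definition of the rotation number through the planar winding number.
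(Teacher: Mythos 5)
Your argument is correct, but it reaches the corollary by a route that is genuinely different from the paper's. The paper treats Corollary~\ref{thm: main 2} as a byproduct of the fact that the proof of Theorem~\ref{thm: main 1} in Section~3 is essentially two-dimensional: the $z$-component enters only through the boundedness of $V_0$ and $\partial_z V_0$, and the one step that truly needs the cylindrical symmetry of the potential --- Lemma~\ref{lem: angolo costante}, i.e.\ the vanishing of the angular momentum at a collision and the resulting piecewise constancy of the angle --- is exactly what the extra hypothesis $V_0=V_0(|u-c_k|)$ restores in the plane. So the paper simply re-runs the spatial proof with the $z$-component suppressed. You go the opposite way: you lift the planar problem to $\R^3$ by adjoining an inert coordinate and invoke Theorem~\ref{thm: main 1} as a black box. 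This is legitimate, and the two points you isolate are indeed the ones that need checking: the minimality transfer (which works because $\wt V$ is $z$-independent, the endpoints pin $z(c)=z(d)=0$, and the potential factor of $\mathcal M_h$ is positive on $\overline{\Xi}_k$ after shrinking $d_k$ as in Remark~\ref{rem: su V 3D}), and the compatibility of the two rotation numbers (immediate from Definition~\ref{def: winfding number}, since the spatial index is defined through the planar projection, which is linear and hence weakly continuous on the closures $K_l$). What your reduction buys is that one never has to re-inspect the proof of Theorem~\ref{thm: main 1} lemma by lemma; what it costs is that it cannot be pushed to Theorem~\ref{thm: main 3}, where $V_0$ is not radial and the lifted potential would fail to be axially Keplerian --- which is why the paper develops the planar machinery directly. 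Your closing remark on ``minimizer'' versus ``local minimizer'' is also apt: the competitors built in the proof of Lemma~\ref{lemma 2.6} are $H^1$-close to $\bar q$, so local minimality suffices, and the paper itself applies the statement in that form.
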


In the planar case we can say something more on the possibility that a minimizer of the action functional or of the Maupertuis' one has a collision. Let us focus on the definition of $\wh{K}$ and of its connected components. We collect together the components of $\wh{K}$ with rotation number "having the same parity", defining
\begin{align*}
\wh{K}_{e}=\wh{K}_{e}^{p_1 p_2}([a,b]):= \bigcup_{l \in 2\Z} \wh{K}_l^{p_1 p_2}([a,b]) \qquad &(\text{direct class})\\
\wh{K}_{o}=\wh{K}_{o}^{p_1 p_2}([a,b]):= \bigcup_{l \in 2\Z+1} \wh{K}_l^{p_1 p_2}([a,b]) \qquad &(\text{inverse class}),
\end{align*}
and their closure $K_{e}$ and $K_{o}$ with respect to the weak topology of $H^1$.

\begin{definition}
A (local) \emph{collision-ejection minimizer} of $\mathcal A_{[a,b]}$ or of $\mathcal M_h$ is a (local) minimizer $u_{\min}$ of $\mathcal A_{[a,b]}$ or of $\mathcal M_h$ in $U \subset H^1([a,b],\R^2)$, such that:
\begin{itemize}
\item[($i$)] there exists a collision set $T_c(u_{\min})\subset [a,b]$, such that for every $t^* \in T_c(u_{\min})$ it holds $u_{\min}(t^*) \in \Sigma$;
\item[($ii$)] the function is symmetric with respect to each collision instant, in the sense that
\[
u_{\min}(t+t^*)=u_{\min}(t^*-t) \qquad \forall t^* \in T_c(q), 
\]
where the previous relation holds whenever $t \in \R$ is such that both $t+t^*$ and $t^*-t$ are in $[a,b]$.
\end{itemize}
\end{definition}

\medskip

Let $V: \R^2 \setminus \Sigma \to \R$, and let $\bar u \in H^1([a,b],\R^3)$. Assume that the interaction between $\bar u$ and $\Sigma$ is locally Keplerian, and let $\Xi_k$ be the neighbourhood of $c_k$ given by Definition \ref{def: planar kep}. Let $\gamma= \bar u([c,d])$ be a connected component of $\overline{\Xi}_k \cap \bar u([a,b])$, for some $k = 1,\dots,m$. Let $\hat \theta$ be the angle between $\bar u (c)$ and $\bar u(d)$ with respect to the centre $c_k$, as introduced in Definition \ref{def: angle planar}. Note that $\bar u|_{[c,d]} \in K_e$ or $\bar u|_{[c,d]} \in K_o$. 


\begin{theorem}\label{thm: main 3}
In the previous setting, let $\a_k$ be defined by Definition \ref{def: planar kep}. Assume that $\a_k \in [1,2)$ for every $k =1,\dots,m$, and that $\bar u|_{[c,d]}$ is a local minimizer of $\mathcal A_{[c,d]}$ or of $\mathcal M_h$ (for some $h \in \R$) in $K_e$ or in $K_o$. It holds:
\begin{itemize}
\item[($i$)] if $\a_k \in (1,2)$, then $\bar u$ has no self-intersection and no collision in $c_k$ in $(c,d)$;
\item[($ii$)] if $\a_k=1$ and $\bar u (c) \neq \bar u (d)$, then $\bar u$ has no self-intersection and no collision in $c_k$ in $(c,d)$;
\item[($iii$)] if $\a_k =1$, $\bar u(c) = \bar u(d)$ and $u|_{[c,d]} \in K_o$, then one of the following alternative occurs:
\begin{itemize}
\item[($a$)] $\bar u$ has no self-intersection and no collision in $c_k$ in $(c,d)$;
\item[($b$)] $\bar u|_{[c,d]}$ is a collision-ejection minimizer, with a unique collision within the time interval $[c,d]$;
\end{itemize}
\item[($iv$)] if $\a_k =1$, $\bar u(c) = \bar u(d)$, $u|_{[c,d]} \in K_e$, and it is non-constant, then one of the following alternative occurs:
\begin{itemize}
\item[($a$)] $\bar u$ has no self-intersection and no collision in $c_k$ in $(c,d)$;
\item[($b$)] $\bar u|_{[c,d]}$ is a collision-ejection minimizer, with a unique collision within the time interval $[c,d]$.
\end{itemize}
\end{itemize} 
\end{theorem}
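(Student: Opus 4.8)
The plan is to reduce every assertion to the behaviour of $\bar u$ on a short arc around $c_k$, where by Definition \ref{def: planar kep} the interaction is exactly Keplerian, and then to treat the three regimes of $\alpha_k$ by different devices. The conceptual reason for enlarging the class from a single $K_l$ to the whole parity class $K_e$ or $K_o$ is that all the competitors I construct are produced by \emph{local surgeries} that alter the winding number $l$ by an even integer: such surgeries preserve the parity class, so the competitor is admissible, but they are unavailable inside a fixed $K_l$, which is exactly why the single-class conclusions of Theorem \ref{thm: main 1} and Corollary \ref{thm: main 2} have to be strengthened here.

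First I would exclude self-intersections away from $c_k$, simultaneously in all four cases. Suppose $\bar u(t_1)=\bar u(t_2)=:P\neq c_k$ with $c<t_1<t_2<d$. Then the sub-arc $\bar u|_{[t_1,t_2]}$ is a closed loop based at $P$, with some integer winding $n$ about $c_k$; reversing its orientation yields an $H^1$ competitor with the same endpoints whose total rotation number changes by $-2n$, so $l\mapsto l-2n$ and the parity class is preserved. Because $\bar u$ solves the motion equation off collisions, the one-sided velocities no longer match at the two junction points after the reversal, so the competitor carries genuine corners; rounding them strictly lowers $\mathcal A_{[c,d]}$ (respectively $\mathcal M_h$), contradicting local minimality. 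This settles the ``no self-intersection'' part everywhere; a self-intersection located \emph{at} $c_k$ is a collision, treated below.

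For the strongly singular regime $\alpha_k\in(1,2)$ (part (i)) I would argue by contradiction from a collision at some $t^*\in(c,d)$. By classical Sundman--Sperling-type asymptotics for the $\alpha_k$-homogeneous problem, after a parabolic blow-up the colliding arc is asymptotic to a homothetic (radial) collision--ejection solution. I would then replace the near-collision piece by a local competitor that skirts $c_k$ — inserting, if necessary, a full $\pm2\pi$ turn so that the parity is unchanged — and invoke a Gordon-type estimate asserting that the homothetic arc is \emph{strictly} non-minimal in its class precisely when $\alpha_k>1$ (the threshold $2\pi/(2-\alpha_k)$ of Corollary \ref{thm: main 2} becoming borderline only at $\alpha_k=1$). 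The resulting strict decrease of the action contradicts minimality, and together with the previous step this proves (i).

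The Kepler regime $\alpha_k=1$ (parts (ii)--(iv)) I would handle through the Levi-Civita regularization $u-c_k=w^2$ and the attendant time change, under which $\mathcal A_{[c,d]}$ (respectively $\mathcal M_h$) becomes a functional that is smooth across $w=0$; thus $\bar u$ lifts to a regularized minimizer $w$ joining a square root $w_1$ of $p_1-c_k$ to $w_2=(-1)^l\sqrt{p_2-c_k}$, so that the even class corresponds to $w_2=+\sqrt{p_2-c_k}$ and the odd class to $w_2=-\sqrt{p_2-c_k}$. A collision $u(t^*)=c_k$ is exactly a zero $w(s^*)=0$, and since $\arg(u-c_k)=2\arg w$, a smooth crossing with $\dot w(s^*)\neq0$ forces $u$ to reach and leave $c_k$ along the \emph{same} ray, i.e.\ a radial bounce. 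In case (ii) a radial bounce would put $p_1,p_2$ on one ray at distance $d_k$, hence $p_1=p_2$, against the hypothesis, so $w$ avoids the origin and there is no collision; in cases (iii)--(iv), where $p_1=p_2$ and $w$ runs from $w_1$ to $-w_1$ (odd) or is a nonconstant loop at $w_1$ (even), either $w$ misses the origin, giving alternative (a), or it crosses it, in which case minimality and uniqueness for the regularized boundary value problem impose the reflection $w(s^*+s)=-w(s^*-s)$, whence $u(t^*+t)=u(t^*-t)$: this is the collision--ejection minimizer of alternative (b), with a single collision since a second crossing would again create a removable loop. I expect the two genuinely hard points to be, on one hand, the sharp Gordon-type comparison of the third paragraph — constructing a class-preserving competitor with a \emph{quantitatively strict} gain that degenerates to equality exactly at $\alpha_k=1$ — and, on the other, the rigorous transformation of the action and Maupertuis functionals across the singularity together with the reflection-symmetry and uniqueness argument that pins down the collision--ejection minimizer in the borderline Kepler case.
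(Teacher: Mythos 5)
Your overall architecture is the right one, and in one place it coincides exactly with the paper's: the loop-reversal surgery (which changes $l$ by an even integer, hence preserves $K_e$/$K_o$, and produces a corner incompatible with minimality) is precisely the paper's Lemma \ref{lem: no autointersezioni}, and the dichotomy ``available angle $\le 2\pi$ versus required angle $2\pi/(2-\alpha_k)$'' is the engine of both proofs. The first genuine gap is in part ($i$): you replace the paper's obstacle-problem and blow-up analysis by an appeal to a ``Gordon-type estimate'' asserting strict non-minimality of the homothetic arc when $\alpha_k>1$, and you leave that estimate unproved. This is not a routine citation here: in Theorem \ref{thm: main 3} the perturbation $V_0$ is \emph{not} radial, so angular momentum is not conserved, the colliding arc of $\bar u$ is not a homothetic solution of any autonomous problem, and one is minimizing the Maupertuis functional of the full potential rather than the action of the pure $\alpha$-Kepler one. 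The paper's Subsections \ref{sub: obstacle}--\ref{sub: core 3d} exist precisely to manufacture, through the obstacle levels $d(\eps)$, a sequence of $\mathcal{C}^1$ constrained minimizers with \emph{nonzero} angular momentum whose blow-up limit is a genuine zero-energy $\alpha$-Kepler arc sweeping at least $2\pi/(2-\alpha_k)$, which is then played against the $2\pi$ bound coming from the absence of self-intersections (Lemma \ref{variazione dell'angolo limitata}). Any rigorous version of your direct comparison would have to reconstruct essentially this machinery, so the crux of ($i$) is missing rather than merely deferred.

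The second gap is in your treatment of $\alpha_k=1$: you apply the Levi-Civita transform directly to $\bar u$ and assert that it ``lifts to a regularized minimizer $w$''. The paper explicitly warns (closing Remark of Section 4) that this step is not available: since $\bar u(t_1)=c_k$, the set $\Lambda(\bar u)$ of continuous square roots does not reduce to $\pm\sqrt{\bar u}$, there is no canonical lift, and the $u$-dependent time change $dt=S|w|^2\,d\tau$ degenerates at the collision, so it is unclear that any element of $\Lambda(\bar u)$ minimizes the regularized functional. The paper circumvents this by lifting the obstacle-problem minimizers $u_n$ (for which $\Lambda_+$ is a genuine bijection and Lemma \ref{nuovo Maupertuis} applies), passing to the limit, and --- crucially --- using the blow-up output $|\bar\phi^+-\bar\phi^-|=0$ of Lemma \ref{lem: angolo se alpha = 1} to show that the limit $\bar w$ has matching one-sided derivatives at the origin and hence solves the regularized equation there; only then do the evenness of the regularized potential and uniqueness for the initial value problem yield $\bar w(\tau_1+\tau)=-\bar w(\tau_1-\tau)$. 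In your version nothing prevents the lift of $\bar u$ from having a corner at $w=0$, and without minimality of $w$ in a class admitting variations through the origin you cannot smooth that corner away, so the reflection argument does not launch. The endgame you describe (the radial bounce forcing $\bar u(c)=\bar u(d)$, hence ruling out a collision in case ($ii$), and the propagation of the reflection to a unique collision) is correct once the regularized equation for $\bar w$ is actually in hand.
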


\begin{remark}
In case $\bar u(c) = \bar u(d)$ and $\bar u \in K_e$ we have to require that $\bar u$ is not constant, because otherwise it is possible that $\bar u(t) \equiv \bar u(c)$. This is particularly evident if $\bar u$ is a local minimizer of the Maupertuis' functional: if $\bar u(t) \equiv \bar u(c)$, then $\dot{\bar u}(t) \equiv 0$; hence, $\mathcal{M}_h(\bar u)=0$, which is the minimum value in $K_e$, provided $\Xi_k$ has been chosen sufficiently small. On the other hand, note that, even if $\bar u(c)= \bar u (d)$, any $\bar u \in K_o$ cannot be constant.
\end{remark}

\subsection{Strategy of the proofs} 

The main difference between dealing with the action or with the Maupertuis' functional consists in the fact that, while a collision-free minimizer of the action functional is a classical solution of the motion equation, a collision-free minimizer of the Maupertuis' functional has to be suitably re-parametrized in order to give a classical solution (we refer to the appendix). For this reason, the fixed energy case is slightly more complicated, and we give the proof for it.

We start with the proof of Theorem \ref{thm: main 1}, having in mind that a lot of intermediate results hold true also in the planar case. This follows from the local cylindrical symmetry of the potential in $\R^3$, see Definition \ref{def: local interaction} and the subsequent remark. 

For a fixed $k$, let $\gamma = \bar q([c,d])$ be a connected component of $\overline{\Xi}_k \cap \bar q([a,b])$. Let $\hat \theta$ denote the angle between $\bar q(c)$ and $\bar q(d)$ with respect to the axis $r_k$; let us assume that $\bar q|_{[c,d]} \in K_l$, with $\hat \theta + 2\pi l \in \left(0, 2\pi/(2-\alpha_k)\right)$. We have already noted that there is a splitting $\R^3 = \Pi_k \oplus r_k$. Let us consider a system of cylindrical coordinates in $\R^3$, as described in the introduction:
\[
\R^3 \ni q=(u,z) \in \R^2 \times \R \quad  \text{and} \quad u = \rho \exp\{i \theta\}, \quad \text{where} \quad (\rho,\theta) \in \R^+ \times \R / 2\pi \Z.
\]
As already observed, we can choose the frame of reference so that $\Pi_k = \{z=0\}$, $r_k=\{u=0\}$, and the angular component of the projection of $\bar q(c)$ of the plane $\Pi_k$ is $0$. Since we are assuming that the interaction between $\bar q$ and $V$ is locally axially Keplerian, there exists a cylindrical neighbourhood of $\gamma$ in which
\[
V(q) = \frac{m_k}{\alpha_k |u|^{\alpha_k}} + V_0(|u|,z),
\]
with $V_0$ smooth and bounded. It is not difficult to check that, since $\bar q|_{[c,d]}$ is a local minimizer of $\mathcal{M}_h$, it is a solution of the differential equation $\omega^2 \ddot{q}= \n V(q)$ for a certain $\omega \in \R$ in each open interval where $\bar u(t) \neq 0$. Moreover, we can show that the angular momentum of $\bar q|_{[c,d]}$ with respect to the axis $\{u=0\}$, defined by $\mathcal{C}_{\bar q}(t) = \bar u(t) \land \dot{\bar u}(t)$, is constant. 

We assume by contradiction that $\bar q$ has a collision at $t_1 \in (c,d)$, that is, $\bar u(t_1)=0$. We prove that, if a collision occurs, then the angular momentum has to be identically $0$, so that the angular component of $\bar q|_{[c,d]}$ is piecewise constant. Moreover, it is possible to show that the collisions are isolated, so that there exists a sub-interval $[c',d'] \subset [c,d]$ such that $t_1$ is the unique collision-time in $[c,d]$, and the restriction $\bar q|_{[c',d']}$ is minimal for $\mathcal A_{[c',d']}$ (or for $\mathcal{M}_h$). In order to describe the topological behaviour of the path parametrized by $\bar q|_{[c',d']}$ with respect to the axis $\{u=0\}$, we show that $\bar q|_{[c',d']}$ is the limit, in the weak topology of $H^1$, of a sequence of minimizers of some auxiliary problems, the so-called \emph{obstacle problems}: let $d(\eps)$ be the minimum of the action in the set of functions $q=(u,z) \in H^1\left([c,d],\R^3\right)$ such that
\[
\min_{t \in [c',d']} |u(t)|= \eps.
\] 
We show that the function $\eps \mapsto d(\eps)$ is well defined for $\eps$ sufficiently small, is continuous in $0$, and that given $\eps_n \to 0$, the sequence of the minimizers $q_n$ for $d(\eps_n)$ is convergent, in the weak topology of $H^1$, to $\bar q$. With a blow-up analysis, we obtain a description of the topological features of $q_n$ with respect to the axis $\{u=0\}$, proving in particular that, in the limit for $t \to t_1$, up to a suitable scaling the sequence $(q_n)$ converges to a parabolic solution of the $\a_k$-Kepler problem. This allows us to say that, if a collision occurs, necessarily the collision minimizer describes an angle greater than or equal to $2\pi/(2-\a_k)$, which gives a contradiction with the fact that $\bar \t+2l \pi \in \left(0,2\pi/(2-\a_k)\right)$. We mention that the blow-up analysis shares some similarities with the approach adopted by Tanaka in \cite{Ta1,Ta2}. 

In order to deduce this contradiction, we strongly use the fact that the angular component is constant. This is not true in the planar case, so to complete the proof of Theorem \ref{thm: main 3} requires some extra work. In particular, for the proof of Theorem \ref{thm: main 3}, we have to assume that $\a_k \in [1,2)$ for every $k$. If $\a_k \in (1,2)$ we obtain the same contradiction as before, while in case $\a_k=1$, keeping track of the topological information we collected in the blow-up analysis, by means of a Levi-Civita regularization we deduce that, if $\bar q|_{[c,d]}$ has a collision, then it is a collision-ejection minimizer.

Before proceeding with the proofs of our main results, we present, as announced, two applications. 

\section{Applications}\label{sec: appl}

\subsection{An application of Theorem \ref{thm: main 1}.} Let us consider the potential 
\begin{equation}\label{reduced potential}
V(q) = \frac{m}{\alpha |u|^\alpha} + V_0(|u|,z)
\end{equation}
where $m>0$, $\alpha \in (0,2)$, $V_0 \in \mathcal{C}^1\left(\R^3 \setminus \{0\}\right)$, but $V_0(u,z) \to +\infty$ as $(u,z) \to (0,0)$. For $\tau>0$ and $\phi \in \R$, let 
\[
\hat \Lambda_\phi:= \left\{q=(\rho \exp\{i \t\},z) \in H^1([a,b],\R^3)\left| \begin{array}{l}
\rho(t+\tau)=\rho(t), \\
\theta(t+\tau)=\theta(t) + \phi, \\
\rho(t) \neq 0 \text{ for every $t \in [0,\tau]$}  
\end{array} \right.\right\},
\]
and let $\Lambda_\phi$ be its closure with respect to the weak topology of $H^1$. We search for some conditions in order to deduce that a minimizer of the action or of the Maupertuis' functional in (a weakly closed subset of) $\Lambda_\phi$ is collision-free, that is, it belongs to $\hat{\Lambda}_\phi$. Let us assume that $q_{\min}=(\rho_{\min} \exp\{i \t_{\min}\},z_{\min})$ is a minimizer of the action functional $\mathcal{A}_{[0,\tau]}$ in a weakly closed subset of $\Lambda_\phi$.

\begin{theorem}
If $\phi \in (0,2\pi/(2-\alpha))$ and $q_{\min}(t) \neq 0$ for every $t \in [0,T]$, then $q_{\min}$ is collision-free.
\end{theorem}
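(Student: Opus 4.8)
The plan is to reduce the statement to a direct application of Theorem \ref{thm: main 1} along the axis $r_1=\{u=0\}$, which here is the only singular line of $\Sigma$. First I would check that the interaction between $q_{\min}$ and $\Sigma$ is locally axially Keplerian in the sense of Definition \ref{def: local interaction}. The potential \eqref{reduced potential} already has the required form $\frac{m}{\alpha|u|^\alpha}+V_0(|u|,z)$ with $V_0$ depending on $u$ only through $|u|$; the only delicate point is that $V_0$ is merely $C^1$ on $\R^3\setminus\{0\}$ and blows up at the origin, which lies on $r_1$. This is exactly where the hypothesis $q_{\min}(t)\neq 0$ for all $t$ enters: since $q_{\min}([0,\tau])$ is compact and avoids the origin, it is bounded away from it, so on every connected component $\gamma\subset\overline{\Xi}_1\cap q_{\min}([0,\tau])$, and on a sufficiently small cylindrical neighbourhood of $\gamma$, the function $V_0$ is a genuine bounded $C^1$ function, as required. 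Shrinking $d_1$ if necessary, I may also assume $\overline{\Xi}_1$ lies in a Hill region, cf. Remark \ref{rem: su V 3D}(iii).

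Second, I would pass from global minimality in (a weakly closed subset of) $\Lambda_\phi$ to local minimality in a class $K_l$. Assume, for contradiction, that $q_{\min}$ has a collision with $r_1$, necessarily at some $t_1$ with $\rho_{\min}(t_1)=0$ and $z_{\min}(t_1)\neq 0$; let $\gamma=q_{\min}([c,d])$ be the component of $\overline{\Xi}_1\cap q_{\min}([0,\tau])$ containing $t_1$, with $q_{\min}(c),q_{\min}(d)\in\partial\Xi_1$ (or $c=0$, $d=\tau$). Then $q_{\min}|_{[c,d]}$ lies in some $K_l$, and it is a local minimizer of $\mathcal A_{[c,d]}$ (resp. of $\mathcal M_h$) there: any competitor in $K_l$ shares the endpoints and the rotation number of $q_{\min}|_{[c,d]}$, so, glued with the unchanged part of $q_{\min}$, it produces an admissible path in the same subset of $\Lambda_\phi$, carrying the same total rotation $\phi$ and with no larger action. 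Hence it cannot strictly beat $q_{\min}|_{[c,d]}$.

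The crux is to control the rotation number $\hat\theta+2l\pi$ of $q_{\min}|_{[c,d]}$ and to show it lies in $\left(0,2\pi/(2-\alpha)\right)$, so that Theorem \ref{thm: main 1} applies and rules out the collision on $[c,d]$. Because $V_0$ is axially symmetric, the angular momentum $\mathcal{C}_{q_{\min}}=u\wedge\dot u$ is conserved on every collision-free subinterval; combined with minimality (unwinding or truncating any oscillation of $\theta_{\min}$ strictly lowers the action while preserving the endpoints and the net rotation $\phi$), this forces $\theta_{\min}$ to be monotone. As its total variation over $[0,\tau]$ equals $\phi$, the rotation number of every component, in particular that of $[c,d]$, belongs to $[0,\phi]\subset[0,2\pi/(2-\alpha))$, which gives the upper bound. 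For the lower bound I would rule out the value $0$ on a collision component: if $\hat\theta+2l\pi=0$, then $q_{\min}|_{[c,d]}$ is a minimizer in the trivial class $K_0$, but such a minimizer is collision-free, since replacing the radial collision–ejection by a hairpin passing the axis on one side keeps the path in $K_0$, preserves the endpoints, and strictly lowers the action for $\alpha\in(0,2)$—contradicting the collision at $t_1$. Therefore $\hat\theta+2l\pi\in\left(0,2\pi/(2-\alpha)\right)$, Theorem \ref{thm: main 1} applies on $[c,d]$, and $q_{\min}$ cannot collide there, a contradiction; hence $q_{\min}\in\hat\Lambda_\phi$. I expect the monotonicity of the angular component, together with the careful exclusion of rotation number $0$ on a collision component while staying inside the prescribed weakly closed subset of $\Lambda_\phi$, to be the main obstacle; everything else reduces to the already-established Theorem \ref{thm: main 1} and to the standard locality of minimizers.
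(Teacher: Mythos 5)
Your overall strategy is the same as the paper's: use the hypothesis $q_{\min}(t)\neq 0$ to make the interaction with $\{u=0\}$ locally axially Keplerian (since the trajectory stays at positive distance from the origin, $V_0$ is $C^1$ and bounded on a cylindrical neighbourhood of each component), localize the minimality to a class $K_l$ on a component $[c,d]$ containing the collision, pin the rotation number of that component inside $\left(0,2\pi/(2-\alpha)\right)$, and invoke Theorem \ref{thm: main 1}. The first two steps and the final reduction are fine.

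The gap is in how you control the rotation number. First, your monotonicity mechanism is wrong: once a collision occurs, the constant angular momentum $\rho_{\min}^2\dot\theta_{\min}$ is forced to vanish (this is Lemma \ref{lem: angolo costante}: the centrifugal term $\mathfrak{C}^2/\rho^2$ would violate the energy relation as $\rho\to 0$), so $\theta_{\min}$ is \emph{piecewise constant} with jumps only at collision times. The angular part then contributes nothing to the kinetic energy, so ``unwinding an oscillation'' does \emph{not} strictly lower the action --- redistributing the jumps leaves $\mathcal A$ (or $\mathcal M_h$) unchanged. The paper's argument is precisely to exploit this degeneracy: since $V$ is independent of $\theta$, one may replace $q_{\min}$ by an \emph{equal-action} minimizer whose angle jumps are all strictly positive and sum to $\phi$, cf.\ \eqref{change the angle2}; this simultaneously gives the upper bound ($\le\phi<2\pi/(2-\alpha)$) and the strict lower bound ($>0$) for the collision component, with no further work. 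Second, your substitute for the lower bound --- that a ``hairpin'' strictly beats a radial collision--ejection arc for every $\alpha\in(0,2)$ --- is unsupported and in fact false for $\alpha=1$: the paper's own Theorem \ref{thm: main 3}, cases ($iii$)--($iv$), allows genuine collision--ejection \emph{minimizers} in classes that contain one-sided detours, and for the Kepler case the degenerate arc realizes the same action level as its non-collision competitors. So you should delete the hairpin step and replace the monotonicity claim by the zero-angular-momentum/redistribution argument; with that correction the proof closes as in the paper.
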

\begin{remark}
Theorem 12 in \cite{TeVe} is a particular case of this statement. In general, searching for solutions of the $N$-body problem, one can impose some symmetry in order to reduce the general problem to a simpler one. This reduction modifies the shape of the original potential, and one can easily check that, if we consider the hip-hop symmetry studied in \cite{TeVe}, we obtain exactly a reduced potential of type \eqref{reduced potential}; in the expression of $V$, the term $m/(\alpha |u|^\alpha)$ comes from the partial collisions, while $V_0$ comes from the total collisions; our assumption $q_{\min}(t) \neq 0$ reflects the fact that in the quoted paper the authors proved firstly that minimizers of the action functional are free of total collisions, and then turn to the question of partial ones, proving Theorem 12. 
\end{remark}

\begin{proof}
The function $q_{\min}$ has constant angular momentum in $[0,T]$: indeed, this follows by the extremality of $q_{\min}$ with respect to variation of the angular component keeping the radial and the $z$ component fixed: for any $\varphi \in \mathcal{C}^\infty_c((0,\tau))$, let $q_\lambda:=(\rho_{\min} \exp\{i (\t_{\min}+\lambda \varphi)\},z_{\min})$; we have
\[
\left. \frac{d}{d\lambda}\mathcal{A}_{[0,T]}(q_\lambda)\right|_{\lambda=0} = 0,
\]
and since this holds true for any $\varphi\in \mathcal{C}^\infty_c((0,\tau))$, one can easily deduce that $\rho_{\min}^2 \dot{\t}_{\min}= const$. Assume by contradiction that $q_{\min}$ has at least one collision. Arguing as in the forthcoming Subsection \ref{sub: basic 3d}, we can show that the set of collision times of $q_{\min}$ is discrete and finite, and the angular component of $q_{\min}$ is piecewise constant: if, $t_1,\dots,t_m$ denote the collision times of $q_{\min}$, then for every $j$
\[
\t_{\min}(t)= 0  \quad t \in [0,t_1),  \quad \tilde \t(t)= \hat \t \quad t \in (t_m,1] \quad
\tilde \t(t)=const.=\bar \theta_j  \quad t \in (t_j,t_{j+1}).
\]
Moreover, since the potential is locally independent of the angle, it is possible if necessary to replace $q_{\min}$ with another minimizer such that
\beq\label{change the angle2}
0 < \bar \t_{j+1}-\bar \t_j \le \hat \t + 2l\pi;
\eeq
for the reader's convenience, we develop the details of this line of reasoning in the proof of the \eqref{change the angle}.

Let $\bar r:= \min_t |q_{\min}(t)|>0$.  Let us consider 
\[
\Xi:= \left\{ x \in \R^3: \dist\left(x,\{u=0\}\right)<1\right\}.
\]
Let $\gamma = q_{\min}([c,d])$ be any connected component of $\overline{\Xi} \cap q_{\min}([0,\tau])$. For instance, let us consider $[c,d]$ such that $t_1 \in [c,d]$. There exists a cylindrical neighbourhood $\Gamma$ of $\gamma$ such that $\Gamma \subset \Xi \cap \left\{|z|>\bar r/2\right\}$, so that here the potential $V_0$ is smooth and bounded. This means that the interaction between $q_{\min}|_{[c,d]}$ and the singular set $\{u=0\}$ is locally axially Keplerian. Furthermore, thanks to the conservation of the angular momentum and to the \eqref{change the angle2}, the rotation number of $q_{\min}|_{[c,d]}$ (as introduced in Definition \ref{def: winfding number}) belongs to $(0,2\pi/(2-\alpha))$, so that Theorem \ref{thm: main 1} implies that $q_{\min}$ is collision-free in $[c,d]$, a contradiction.
\end{proof}

\subsection{Fixed ends trajectories for a generalized planar $N$-centre problem}

Here we give some applications of Theorem \ref{thm: main 3} to some  planar problems of $N$-centre type, where for each centre we consider a possibly different degree of homogeneity. We consider the study of the motion of a test particle under the Newtonian-like attraction of $N$ fixed heavy bodies, the centres of the problem. Let $c_k \in \R^2$ be the position of the $k$-th centre. We consider the differential equation
\begin{equation}\label{eq appl 2D}
\ddot{u}= -\sum_{k=1}^N \frac{m_k}{|u-c_k|^{2+\alpha_k}}(u-c_k)+ \nabla V_0(u)= \nabla V(u),
\end{equation}
where $m_k>0$ and $\alpha_k \in [1,2)$ for every $k$, 
\[
V(u) = \sum_{k=1}^N\frac{m_k}{\alpha_k|u-c_k|^{\alpha_k}}+ V_0(u),
\]
and $V_0 \in \mathcal{C}^1(\R^2) \cap W^{1,\infty}(\R^2)$ and is such that equation \eqref{eq appl 2D} does not admit stationary solutions. We wish to prove the existence and the multiplicity of solutions of the fixed ends and fixed energy problem
\begin{equation}\label{fixed ends fixed energy}
\begin{cases}
\ddot{u}= \nabla V(u) & \text{in $(a,b)$}\\
\frac{1}{2}|\dot{u}|^2-V(u)=h   & \text{in $(a,b)$}\\
u(a)=p_1 \qquad u(b)=p_2,
\end{cases}
\end{equation}
where $a,b$ are not assigned and $p_1,p_2 \in \R^2 \setminus \{c_1,\dots,c_N\}$. We consider the case $h \ge 0$; for the case $h<0$, we refer to Theorem 1.2 of \cite{SoTe}.

Let 
\[
\wh{H}=\wh{H}_{p_1 p_2}([0,1]):= \left\{ u \in H^1([0,1],\R^2): u(0)=p_1, \ u(1) = p_2, \ u(t) \neq c_j \ \forall t \in [0,1] \right\},
\]
and let $H= H_{p_1 p_2}([a,b])$ be its closure with respect to the weak topology of $H^1$. Since we are dealing with a fixed energy problem, we consider the Maupertuis' functional $\mathcal{M}_h: H \to \R \cup \{+\infty\}$ defined by 
\[
\mathcal{M}_h(u):= \frac{1}{2}\int_0^1 |\dot{u}(t)|^2\,dt \int_0^1 \left(V(u(t))+h \right)\,dt.
\]
As explained in \cite{SoTe}, the paths in $\wh{H}$ can be classified with respect to their winding numbers with respect to the centres. These rotation numbers can be computed by artificially closing the paths of $\wh{H}$, in the following way. Let us fix a smooth function $v \in H^1([1,2],\R^2)$ such that $v(1)=p_2$, $v(2)=p_1$, and $v(t) \neq c_k$ for every $t$. For any $u \in \wh{H}$, we define
\[
\gamma_u(t):= \begin{cases} u(t) & t \in [0,1] \\ v(t) & t \in (1,2]. \end{cases}
\] 
Since $\gamma_u$ is closed, it is well defined the usual winding number
\[
\ind(\gamma_u,c_k) = \frac{1}{2\pi i} \int_{\gamma_u} \frac{dz}{z-c_k},
\]
where $z$ denotes a complex variable. We consider open subsets of $\wh{H}$ of type
\[
\wh{\mathfrak{H}}_{(l_1,\dots,l_N)}=\left\{u \in \wh{H}: \ind(\gamma_u,c_k)=l_k \quad \forall k=1,\dots,N\right\},
\]
where $(l_1,\dots,l_N) \in \Z^N$. We collect together all the classes having winding number with the same parity with respect to each centre, that is, for a given $(l_1\dots,l_N) \in \Z_2^N$, we define the open set
\[
\wh{H}_{(l_1,\dots,l_N)}:= \left\{u \in \wh{H}: \ind(\gamma_u,c_k) \equiv l_k \mod 2,  \quad \forall k=1,\dots,N\right\},
\]
and we consider its closure $H_{(l_1,\dots,l_N)}$ with respect to the weak topology of $H^1$.

\begin{theorem}
Let $(l_1,\dots,l_N) \in \Z_2^N$ such that
\begin{equation}\label{hstrana}
\exists \ i \neq j: \ l_i \not \equiv l_j \mod 2.
\end{equation}
There exists a weak solution $x_{(l_1,\dots,l_N)}$ of problem \eqref{fixed ends fixed energy} which is a re-parametrization of a minimizer $u_{(l_1,\dots,l_N)}$ of the Maupertuis' functional $\mathcal{M}_h$ in $H_{(l_1,\dots,l_N)}$. In particular:
\begin{itemize}
\item[($i$)] if there exists $k$ such that $\alpha_k=1$, and
\begin{equation}\label{cond necessaria coll 1}
l_k \not \equiv l_j \mod 2 \qquad \forall j \neq k,
\end{equation}
then either $u_{(l_1,\dots,l_N)}$ is collision-free, or it is a collision-ejection minimizer, with precisely one collision in $c_k$ within the time interval $[0,1]$;
\item[($ii$)] if there exist $j \neq k \in \{1,\dots,N\}$ such that $\alpha_k=\alpha_j=1$,
\begin{equation}\label{cond necessaria coll 2}
l_k \equiv l_j \mod 2 \quad \text{and} \quad l_k \not \equiv l_m \mod 2 \quad \forall m \not \in \{k,j\}, 
\end{equation}
then either $u_{(l_1,\dots,l_N)}$ is collision-free, or it is a collision-ejection minimizer, with precisely one collision in $c_k$ and one collision in $c_j$ within the time interval $[0,1]$;
\item[($iii$)] in all the other cases, $u_{(l_1,\dots,l_N)}$ is collision-free.
%
%
%
%
\end{itemize}
\end{theorem}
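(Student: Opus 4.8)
The plan is to first obtain $u_{(l_1,\dots,l_N)}$ as a minimizer of $\mathcal{M}_h$ in the weakly closed class $H_{(l_1,\dots,l_N)}$ by the direct method, then to read off its behaviour near each centre from Theorem \ref{thm: main 3}, and finally to convert the resulting local dichotomy into the global parity bookkeeping that isolates the exceptional cases \eqref{cond necessaria coll 1}--\eqref{cond necessaria coll 2}.

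\emph{Existence.} Since $V\ge0$, $h\ge0$ and $V_0\in W^{1,\infty}(\R^2)$, the functional $\mathcal{M}_h$ is weakly lower semicontinuous and coercive on $H^1([0,1],\R^2)$, the borderline case $h=0$ being handled as in \cite{SoTe}; as $H_{(l_1,\dots,l_N)}$ is weakly closed by construction, a minimizing sequence admits a weak limit $u_{(l_1,\dots,l_N)}$ minimizing $\mathcal{M}_h$ in the class. Hypothesis \eqref{hstrana} forces every competitor to have non-trivial winding --- it encircles two centres with opposite parity, hence is non-constant --- so $\int_0^1|\dot u|^2>0$; together with $V+h\not\equiv0$ along $u$ this places the minimum at a strictly positive level. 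By the appendix, on every subinterval avoiding $\Sigma$ the Maupertuis reparametrization of $u$ solves \eqref{eq appl 2D} with energy $h$, and concatenation yields the claimed weak solution $x_{(l_1,\dots,l_N)}$ once the collisions are classified.

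\emph{Localization via Theorem \ref{thm: main 3}.} Fix $k$ and let $\gamma=u([c,d])$ be any connected component of $\overline{\Xi}_k\cap u([0,1])$, so that $u(c),u(d)\in\partial\Xi_k$. The restriction $u|_{[c,d]}$ is a local minimizer of $\mathcal{M}_h$ with these fixed endpoints in the class $K_e$ or $K_o$ prescribed by the parity of its rotation number about $c_k$: any admissible local variation leaves the endpoints --- hence the rest of $u$ and all the windings about the remaining centres --- untouched and preserves the parity about $c_k$, so it competes in the global problem. Theorem \ref{thm: main 3} therefore applies componentwise. When $\alpha_k\in(1,2)$ there is neither self-intersection nor collision at $c_k$; when $\alpha_k=1$ and $u(c)\neq u(d)$ the same holds; and when $\alpha_k=1$ with $u(c)=u(d)$ the only alternative to a collision-free component is a collision-ejection with a single collision at $c_k$, which, by the blow-up analysis underlying Theorem \ref{thm: main 3}, sweeps exactly the threshold angle $2\pi/(2-\alpha_k)=2\pi$.

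\emph{Parity bookkeeping (the crux).} It remains to decide when the collision alternative survives minimality and to count the collisions, which is the heart of the matter. The decisive fact is that a collision-ejection at $c_k$ sweeps a full $2\pi$, hence changes the winding number about $c_k$ by one unit and sits exactly on the weak-$H^1$ boundary between the collision-free classes that encircle $c_k$ an even, respectively odd, number of times; in the Levi-Civita variable $w$, with $u-c_k=w^2$, the collision unfolds into a smooth crossing whose two admissible desingularizations differ precisely by the parity of $\ind(\gamma_u,c_k)$. I would then run a comparison argument: only one desingularization keeps the path in $H_{(l_1,\dots,l_N)}$, and a collision is compatible with the minimality of $u$ only if the parity required about $c_k$ cannot be supplied more cheaply by rerouting the path around the remaining centres. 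A count of the forced windings shows such a reroute to be strictly cheaper --- excluding the collision --- unless the parity class of $c_k$ is carried by $c_k$ alone (condition \eqref{cond necessaria coll 1}: one collision, case (i)) or by $c_k$ together with a single further $\alpha=1$ centre $c_j$ of the same parity (condition \eqref{cond necessaria coll 2}: one collision at each of $c_k$ and $c_j$, case (ii)); in every other configuration the reroute forbids the collision, which is case (iii). The main obstacle is precisely this comparison: one must construct the competing rerouted paths explicitly, estimate their action through the local Keplerian form of Definition \ref{def: planar kep}, and invoke the blow-up/Levi-Civita description to certify that outside the exceptional parities the swept angle strictly exceeds $2\pi$, so that the collision is ruled out; the fact that the minimizer makes a single pass near each exceptional centre --- whence the exact collision counts --- is part of the same analysis.
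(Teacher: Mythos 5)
Your existence step and the component-wise application of Theorem \ref{thm: main 3} (on each connected component of $\bar u([0,1])\cap\overline{\Xi}_k$, with $\bar u|_{[c,d]}$ minimizing in $K_e$ or $K_o$ and non-constant because $\bar u(c)\in\partial\Xi_k$ while $\bar u(t_1)=c_k$) agree with the paper. The gap is in your final ``parity bookkeeping'' step, which you yourself flag as the main obstacle: the rerouting/action-comparison argument is neither carried out nor, in my view, the right tool. The parity about $c_k$ is the parity of the winding number about $c_k$ itself, so it cannot be ``supplied more cheaply by rerouting the path around the remaining centres''; a rerouted competitor with different winding about $c_k$ lies in a different class $H_{(l_1,\dots,l_N)}$ and is therefore not an admissible competitor, while a comparison within the same class would require exactly the quantitative action estimate that the topological constraint is designed to circumvent --- and your sketch offers only the assertion that ``a count of the forced windings shows such a reroute to be strictly cheaper''.

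The paper's mechanism is structural and involves no comparison. Once Theorem \ref{thm: main 3} yields the local collision-ejection condition at the first collision time $t_1$ (with $\bar u(t_1)=c_k$ and $t_1$ the unique collision in $[c,d]$), one uses that $\bar u$ solves $\bar\omega^2\ddot{\bar u}=\nabla V(\bar u)$ on $(c,t_1)$ and on $(t_1,d)$, that this equation is reversible under $t\mapsto -t$, and hence, by uniqueness for the initial value problem, that the reflection identity $\bar u(t_1-t)=\bar u(t_1+t)$ propagates to the whole of $[0,1]$. The trajectory therefore retraces itself through every collision: with one collision it runs from $p_1$ to $c_k$ and back, with $p_2$ on the retraced arc; with a second collision it bounces between two centres $c_k$ and $c_j$, with $p_1$ and $p_2$ on the bounce path. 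Reading off the winding parities of such bounce paths from the definition of $\wh{H}_{(l_1,\dots,l_N)}$ is what singles out \eqref{cond necessaria coll 1} and \eqref{cond necessaria coll 2} (with $\alpha_k=1$, respectively $\alpha_k=\alpha_j=1$, supplied by Theorem \ref{thm: main 3}), and the bound of exactly one collision per colliding centre, hence at most two in $[0,1]$, follows from minimality rather than from an angle estimate. You should replace the comparison step by this reversibility--uniqueness argument; the Levi-Civita desingularization picture you invoke is already subsumed in the proof of Theorem \ref{thm: main 3} and need not be redone here.
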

The following pictures represent, respectively, a collision-ejection minimizer in $H_{(0,0,1,0,0)}$ with a unique collision, a collision-ejection minimizer in $H_{(1,0,1,0,0)}$ with two collisions within $[0,1]$, and a collision-free minimizer in $H_{(1,1,0,0,1)}$.
\begin{center}
\begin{tikzpicture}[>=stealth, scale=1.5]
\filldraw (0.3,-0.6) circle (1pt)
          (0.8,0.3) circle (1pt)
          (0,0.7) circle (1pt)
          (-0.6,-0.3) circle (1pt)
          (-0.5,0.1) circle (1pt) 
          (-1.5,1.5) circle (1pt)
          (-0.7,0.83) circle (1pt);
\draw[<<->] (-1.5,1.5) .. controls (0,0.3) and (0,0) .. (0.3,-0.6);
\draw[font=\footnotesize] (0.3,-0.8) node{$c_3$};
\draw[font=\footnotesize] (0.6,0.3) node{$c_2$};
\draw[font=\footnotesize] (-0.2,0.7) node{$c_1$};
\draw[font=\footnotesize] (-0.6,-0.5) node{$c_4$};
\draw[font=\footnotesize] (-0.5,-0.1) node{$c_5$};
\draw[font=\footnotesize] (-1.2,1.5) node{$p_2$};
\draw[font=\footnotesize] (-0.7,1) node{$p_1$};
\end{tikzpicture}
$\qquad \qquad $
\begin{tikzpicture}[>=stealth, scale=1.5]
\filldraw (0.3,-0.8) circle (1pt)
          (0.9,0.6) circle (1pt)
          (-0.2,1.3) circle (1pt)
          (-0.8,-0.9) circle (1pt)
          (-0.9,0) circle (1pt) 
          (0.13,-0.2) circle (1pt)
          (-0.03,0.3) circle (1pt);
\draw[<<->] (0.3,-0.8) .. controls (0,0.3) and (0,0) .. (-0.2,1.3);
\draw[font=\footnotesize] (0.5,-0.6) node{$c_3$};
\draw[font=\footnotesize] (0.9,0.4) node{$c_2$};
\draw[font=\footnotesize] (0,1.1) node{$c_1$};
\draw[font=\footnotesize] (-0.8,-0.7) node{$c_4$};
\draw[font=\footnotesize] (-0.9,0.2) node{$c_5$};
\draw[font=\footnotesize] (0.3,-0.2) node{$p_2$};
\draw[font=\footnotesize] (-0.2,0.3) node{$p_1$};
\end{tikzpicture}
$\qquad \qquad$
\begin{tikzpicture}[>=stealth, scale=1.5]
\filldraw (0.3,-0.6) circle (1pt)
          (0.8,0.3) circle (1pt)
          (0,0.7) circle (1pt)
          (-0.6,-0.3) circle (1pt)
          (-0.5,0.5) circle (1pt) 
          (1.5,-0.2) circle (1pt)
          (-0.7,0.83) circle (1pt);
\draw[->] (-0.7,0.83) .. controls (-0.5,-0.3) and (0,-0.2) .. (1.5,-0.2);
\draw[font=\footnotesize] (0.3,-0.8) node{$c_3$};
\draw[font=\footnotesize] (0.6,0.3) node{$c_2$};
\draw[font=\footnotesize] (-0.2,0.7) node{$c_1$};
\draw[font=\footnotesize] (-0.6,-0.5) node{$c_4$};
\draw[font=\footnotesize] (-0.4,0.3) node{$c_5$};
\draw[font=\footnotesize] (1.5,0) node{$p_2$};
\draw[font=\footnotesize] (-0.7,1) node{$p_1$};
\end{tikzpicture}

\end{center}

\begin{remark}
Assumption \eqref{hstrana} permits to exclude degenerate situations, such as the fact that a minimizer is constant.
\end{remark}

\begin{proof}
Let $(l_1,\dots,l_N) \in \Z_2^N$ such that \eqref{hstrana} holds. It is not difficult to prove that the functional $\mathcal{M}_h$ is weakly lower semi-continuous and coercive, so that there exists a minimizer $\bar u=u_{(l_1,\dots,l_N)}$ in the weakly closed set $H_{(l_1,\dots,l_N)}$. We assume that $\bar u$ has some collisions, and we show that necessarily we are in cases ($i$) or ($ii$) of the theorem. Let us introduce $N$ neighbourhoods
\[
\Xi_k :=\left\{ u \in \R^2: |u-c_k|<d_k\right\},
\]
where $d_k$ is chosen in such a way that $\Xi_j\cap \Xi_k = \emptyset$ of $j \neq k$, and $p_1,p_2 \not \in \Xi_k$ for every $k$. Using these neighbourhoods in Definition \ref{def: planar kep}, from the explicit expression of the potential $V$ we deduce that the interaction between $\bar u$ and $\{c_1,\dots,c_N\}$ is locally Keplerian. Let $t_1$ be the first collision time of $\bar u$; let us say that $\bar u(t_1)=c_k$, and let $\bar u([c,d])$ be the connected component of $\bar u([0,1]) \cap \Xi_k$ containing $t_1$. Since $\bar u$ minimizes $\mathcal{M}_h$ in $H_{(l_1,\dots,l_N)}$, the restriction $\bar u|_{[c,d]}$ minimizes $\mathcal{M}_h$ in one set between $K_e$ and $K_o$, which have been defined in Subsection \ref{sub: intro planar}. Even if $\bar u$ in $K_e$, it cannot be constant in $[c,d]$: indeed, $\bar u(c) \in \pa \Xi_k$ and $\bar u(t_1)=c_k$. We are then in position to apply Theorem \ref{thm: main 3}, deducing that $\bar u|_{[c,d]}$ is a collision ejection minimizer, and $t_1$ is the unique collision time of $\bar u$ in $[c,d]$. Now, $\bar u$ solves equation 
\begin{equation}\label{eq per bar u applica}
\bar \omega^2 \ddot{\bar u}= \nabla V(\bar u) \quad \text{where} \quad \bar \omega^2 = \frac{\int_0^1 V(\bar u) +h}{\frac{1}{2} \int_0^1 |\dot{\bar u}|^2 },
\end{equation} 
in $(c,t_1)$ and in $(t_1,d)$, satisfies the collision-ejection condition in $[c,d]$, and this equation is reversible with respect to the time involution $t \mapsto -t$; as a consequence, the uniqueness theorem for the initial value problems ensures that $\bar u(t_1-t) = \bar u(t_1+t)$ whenever both $t_1-t$ and $t_1+t$ are in $[0,1]$. This means that $\bar u$ is an ejection-collision minimizer. 

If $t_1$ is the unique collision time of $\bar u$, then necessarily we are in case ($i$) or ($ii$) of the theorem; this follows directly from the definition of the component $\wh{H}_{(l_1, \dots, l_N)}$. If there exists another collision time $t_2$, then repeating the previous line of reasoning we deduce that $\bar u(t_2-t) = \bar u(t_2+t)$ whenever both $t_2-t$ and $t_2+t$ are in $[0,1]$; this implies that the trajectory of $\bar u$ bounces between two centres $c_k$ and $c_j$, and $p_1$ and $p_2$ belongs to this trajectory. Again, by definition of $\wh{H}_{(l_1,\dots,l_N)}$, it follows that we are in case ($iii$) of the theorem. The fact that in the time interval $[0,1]$ the function $\bar u$ has only two collision times follows from the minimality of $\bar u$.
\end{proof}

\section{Proof of Theorem \ref{thm: main 1}}

\subsection{Basic properties of a local minimizer}\label{sub: basic 3d}

For a fixed $k$, let $\gamma = \bar q([c,d])$ be a connected component of $\overline{\Xi}_k \cap \bar q([a,b])$. We assume that $[c,d]=[0,1]$ to simplify the notation. Let $\hat \theta$ denote the angle between $\bar q(0)$ and $\bar q(1)$ with respect to the axis $r_k$; we are assuming that $\bar q|_{[0,1]} \in K_l$, with $\hat \theta + 2\pi l \in \left(0, 2\pi/(2-\alpha_k)\right)$.

As explained above, it is possible to introduce a frame of reference $(u,z) = (\rho \exp\{i \theta\},z) \in \R^2 \times \R$ in such a way that 
\[
V(q) = \frac{m_k}{\alpha_k |u|^{\alpha_k}} + V_0(|u|,z) 
\]
for $q$ in a cylindrical neighbourhood $\Gamma$ of $\gamma$; we recall that $m_k>0$ and $V_0 \in \mathcal{C}^1\left(\overline{\Gamma}\right)$. Since we proceed with a local argument, we write $m$ instead of $m_k$ and $\alpha$ instead of $\alpha_k$ to simplify the notation. 

We assume that $\bar q$ has a collision in $(0,1)$, and we wish to show that this implies $\hat \theta > 2\pi/(2-\alpha)$, in contradiction with our assumption. In what follows, we write 
\[
\bar q= (\bar u,\bar z) = (\bar \rho \exp\{i \bar \theta\},\bar z).
\]
As already announced, the proof of Theorem \ref{thm: main 1} requires a lot of intermediate results which we use also in the proof of Theorem \ref{thm: main 3}. For this reason, we explicitly point out when we use the fact that, in the present problem, the term $V_0$ in Definition \ref{def: local interaction} depends on $u$ only through $|u|$, while in the planar case $V_0$ this is not true. Clearly, those statements have to be neglected in the next section.

\medskip


As $\bar q(0) \in \pa \Xi_k$ and $\bar q(t_1) \in r_k$ for some $t_1 \in (0,1)$, $\bar q$ is not constant and $\|\dot{\bar q}\|_2>0$. As a consequence, it is well defined the quantity
\[
\bar \omega^2:= \frac{\int_0^1 (V(\bar q)+h)}{\frac{1}{2} \int_0^1 \dot{\bar q}^2}.
\]
By minimality, we know that (see Proposition \ref{conservazione dell'energia}) the energy function
\[
t \mapsto \frac{1}{2}|\dot{\bar q}(t)|^2 - \frac{V(\bar q(t))}{\bar \omega^2},
\]
which is defined almost everywhere in $[0,1]$, is constant and equal to $h/\bar \omega^2$. In particular, this implies that 
\begin{equation}\label{oss su omega}
\bar \omega^2 = \frac{\int_c^d (V(\bar q)+h)}{\frac{1}{2} \int_c^d \dot{\bar q}^2} \quad \text{for every $(c,d) \subset [0,1]$}.
\end{equation}
Let $T_c(\bar q)$ be the set of the collision times of $\bar q$:
\[
T_c(\bar q) = \left\{ t \in [0,1]: \bar q(t) \in \Sigma\right\}.
\]

\begin{lemma}
It results $T_c(\bar q)= \left\{ t \in [0,1]: \bar q(t) \in r_k\right\}$.
\end{lemma}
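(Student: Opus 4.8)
The inclusion $\supseteq$ is immediate since $r_k \subset \Sigma$, so the entire content of the lemma is the opposite inclusion: I must rule out that $\bar q$ meets, within the interval $[0,1]$, any singular line $r_j$ other than $r_k$. The plan is to exploit that the restriction $\gamma = \bar q([0,1])$ lives inside the cylindrical neighbourhood $\Gamma$ furnished by Definition \ref{def: local interaction}, where $V$ has the explicit form $m/(\alpha|u|^\alpha) + V_0(|u|,z)$ with $V_0 \in \mathcal{C}^1(\overline{\Gamma})$, and to show that $\overline{\Gamma}$ cannot touch $\Sigma$ except along $r_k$.

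Concretely, I would first record the two facts to be played against each other: (a) by hypothesis $V(q) \to +\infty$ whenever $\dist(q,\Sigma) \to 0$; and (b) on $\Gamma \setminus \Sigma$ one has the closed formula above, in which the only singular term $m/(\alpha|u|^\alpha)$ blows up \emph{exactly} when $|u| = \dist(q,r_k) \to 0$, while $V_0$ is continuous, hence bounded, on the set $\overline{\Gamma}$. Then I would argue by contradiction: suppose that $q^* := \bar q(t) \in r_j$ for some $j$ with $q^* \notin r_k$ (so necessarily $j \neq k$). Since $\Gamma$ is open and $\gamma \subset \Gamma$, we have $q^* \in \Gamma$, and because $\Sigma$ has empty interior we may choose points $p_n \in \Gamma \setminus \Sigma$ with $p_n \to q^*$. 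Along this sequence $\dist(p_n,r_k) \to \dist(q^*,r_k) > 0$, so $m/(\alpha|u(p_n)|^\alpha)$ stays bounded, and $V_0(p_n) \to V_0(q^*)$ by continuity; hence $V(p_n)$ remains bounded. But $\dist(p_n,\Sigma) \le \dist(p_n,r_j) \to 0$, forcing $V(p_n) \to +\infty$ by (a), a contradiction.

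This yields $\overline{\Gamma} \cap \Sigma = \overline{\Gamma} \cap r_k$, and since $\bar q([0,1]) = \gamma \subset \Gamma$, every time $t \in [0,1]$ with $\bar q(t) \in \Sigma$ is a time at which $\bar q(t) \in r_k$, which is the claimed equality. The only genuinely delicate point is the bookkeeping around the domain of $V$: the explicit expression holds a priori only on $\Gamma \setminus \Sigma$, and the singular term is itself undefined on $r_k$, so the limiting comparison must be run through points of $\Gamma \setminus \Sigma$ (using density of $\Gamma \setminus \Sigma$ in $\Gamma$ together with the continuity of $V_0$ up to $\overline{\Gamma}$) rather than evaluated directly at $q^*$. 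Everything else reduces to the openness of $\Gamma$ and the inclusion $\gamma \subset \Gamma$ guaranteed by the word ``neighbourhood'' in Definition \ref{def: local interaction}.
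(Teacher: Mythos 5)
Your argument is correct and is essentially the paper's own proof: both show that $\Sigma\cap\Gamma\subset r_k$ by playing the explicit local form $V=m/(\alpha|u^k|^{\alpha_k})+V_0(|u^k|,z^k)$, finite away from $r_k$, against the hypothesis that $V$ blows up on approach to $\Sigma$. Your version is marginally more careful (the paper "evaluates" $V$ directly at a point of $r_j\setminus r_k$, whereas you run the comparison along a sequence in $\Gamma\setminus\Sigma$), but the idea and the contradiction are identical.
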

\begin{proof}
It is not obvious, because we are not assuming that $r_i \cap r_j = \emptyset$ whenever $i \neq j$. On the other hand, by Definition \ref{def: local interaction}, it is immediate to observe that $\Sigma \cap \Gamma \subset r_k$ (where $\Gamma$ is the cylindrical neighbourhood $\Gamma$ of $\gamma=\bar q([0,1])$, given by Definition \ref{def: local interaction}). Indeed, if this is not true, there exists $j \neq k$ such that $r_j \cap r_k \cap \Gamma \neq \emptyset$. Thus, by the expression of $V$ in $\Gamma$, we see that there exists $p=(u^k,z^k) \in r_j \setminus r_k$ such that 
\[
V(p) = \frac{m}{\alpha |u^k|^\alpha} + V_0(|u^k|,z^k) < +\infty,
\] 
in contradiction with the fact that $p$ is a singular point of $V$.
\end{proof}

Since $\bar q$ is continuous and $\mathcal{M}_h(\bar q)<+\infty$, the set $T_c(\bar q)$ is a closed set of null measure, and its complement is the union of a finite or countable number of closed intervals. It is well known that, if $(c,d) \subset [0,1] \setminus T_c(\bar q)$, then $\bar q$ is a smooth solution of a differential equation in $(c,d)$ (see Theorem \ref{teorema 4.1}). A remarkable fact is that, even if $\bar q$ has some collisions, its $z$ component is smooth. 

\begin{lemma}\label{smoothness of z}
Let $(c,d) \subset [0,1]$ be a connected component of $(c,d) \setminus T_c(\bar q)$. Then $\bar q \in \mathcal{C}^2((c,d))$ and
\beq\label{eq q_k^h}
\bar \o^2 \ddot{\bar q}(t)= \n V(\bar q(t)) \qquad t \in (c,d).
\eeq
Moreover, $\bar z \in \mathcal{C}^2([0,1])$ and
\[
\bar \o^2\ddot{\bar z}(t)= \pa_z V_0(|\bar u(t)|, \bar z(t)) \qquad t \in (0,1).
\]
\end{lemma}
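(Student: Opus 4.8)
The plan is to split the statement into two parts: the interior regularity on collision-free subintervals, which is essentially classical, and the global $\mathcal{C}^2$ regularity of the $z$-component, which is the genuinely new content and where I would concentrate the effort.

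For the first part, I would invoke the standard regularity theory for collision-free minimizers recalled in the appendix (Theorem \ref{teorema 4.1}). On any connected component $(c,d)$ of $[0,1] \setminus T_c(\bar q)$ the trajectory stays away from the singular set $\Sigma$, so the potential $V$ is of class $\mathcal{C}^1$ in a neighbourhood of $\bar q([c,d])$; a minimizer of $\mathcal{M}_h$ in $K_l$ is then, after the reparametrization encoded by $\bar\omega$, a classical $\mathcal{C}^2$ solution of $\bar\omega^2 \ddot{\bar q} = \nabla V(\bar q)$, by the usual bootstrap from the Euler--Lagrange equation. Together with the conservation of energy \eqref{oss su omega}, this settles the first display.

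The core of the lemma is the global smoothness of $\bar z$, and the key idea is that variations acting only on the $z$-component are admissible on the whole interval $[0,1]$, including at collision times. Indeed, for $\varphi \in \mathcal{C}^\infty_c((0,1))$ I would set $\bar q_\lambda := (\bar u, \bar z + \lambda \varphi)$. Since the projection $\bar u$ onto the plane $\Pi_k$ is untouched, the varied path has exactly the same collision set ($\bar q_\lambda(t) \in r_k$ iff $\bar u(t) = 0$) and exactly the same winding number with respect to $r_k$; hence $\bar q_\lambda \in K_l$ for every $\lambda$, and for $|\lambda|$ small it remains in the neighbourhood where $\bar q$ is minimal. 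Crucially, the singular term $m/(\alpha|u|^\alpha)$ does not depend on $z$, so it is inert under this variation, and the only contribution from the potential comes through $\partial_z V_0(|\bar u|, \bar z)$, which is bounded because $V_0 \in \mathcal{C}^1(\overline{\Gamma})$. Thus $\lambda \mapsto \mathcal{M}_h(\bar q_\lambda)$ is differentiable with no obstruction from the singularity. Imposing $\frac{d}{d\lambda}\big|_{\lambda = 0}\mathcal{M}_h(\bar q_\lambda) = 0$ and dividing by $\frac12\int_0^1 |\dot{\bar q}|^2 > 0$ yields the weak form
\[
\bar\omega^2 \int_0^1 \dot{\bar z}\,\dot\varphi \, dt = -\int_0^1 \partial_z V_0(|\bar u|, \bar z)\,\varphi \, dt \qquad \forall \varphi \in \mathcal{C}^\infty_c((0,1)),
\]
valid on all of $(0,1)$ at once. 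This identifies the distributional derivative of $\dot{\bar z}$ with $\bar\omega^{-2}\partial_z V_0(|\bar u|,\bar z) \in L^\infty(0,1)$, so $\bar z \in H^2(0,1) \hookrightarrow \mathcal{C}^{1}([0,1])$. A final bootstrap closes the argument: since $\bar u$ and $\bar z$ are continuous and $V_0 \in \mathcal{C}^1$, the right-hand side $t \mapsto \partial_z V_0(|\bar u(t)|, \bar z(t))$ is continuous, hence $\ddot{\bar z}$ coincides with a continuous function and $\bar z \in \mathcal{C}^2([0,1])$, with $\bar\omega^2 \ddot{\bar z} = \partial_z V_0(|\bar u|, \bar z)$ throughout.

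I expect the delicate point to be the justification that the $z$-variation genuinely preserves membership in $K_l$ and that the first variation is legitimately differentiable despite the presence of collisions: one must check that the variation neither alters the rotation number nor pushes $\bar q_\lambda$ across $\partial\Xi_k$ for small $\lambda$, and that the singular part of the action is truly unaffected by $z$-shifts. Once these points are secured, the absence of any singular contribution in the $z$-direction makes the remainder a routine elliptic bootstrap. I observe that this part of the argument uses only that $\partial_z V_0$ is bounded and continuous, and not that $V_0$ depends on $u$ solely through $|u|$, so this is precisely one of the statements that will have to be dropped in the planar discussion, where the $z$-component is absent altogether.
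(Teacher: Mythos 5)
Your proposal is correct and follows essentially the same route as the paper's (very terse) proof: interior regularity on collision-free components via compactly supported variations and the Euler--Lagrange equation, and global $\mathcal{C}^2$ regularity of $\bar z$ via variations $\bar q + \lambda(0,\zeta)$ with $\zeta \in \mathcal{C}^\infty_c((0,1))$, which remain in $K_l$ and see no singular contribution, combined with the boundedness of $\partial_z V_0$. You merely spell out the weak formulation and the $H^2\hookrightarrow\mathcal{C}^1$ bootstrap that the paper leaves as ``a direct computation.''
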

\begin{proof}
The first part is a consequence of the extremality of $\bar q$ with respect to variations with compact support in $(c,d)$: for any $\varphi \in \mathcal{C}^\infty_c(c,d)$, it results $\bar q+\lambda \varphi \in K_l$ for $|\lambda|$ sufficienlty small; as a consequence,
\begin{equation}\label{extremality}
\left.\frac{d}{d\lambda} \mathcal{M}_h(\bar q+\lambda \varphi)\right|_{\lambda=0}=0,
\end{equation}
and a direct computation gives the desired result.

As far as the smoothness of $\bar z$ is concerned, one can consider variations of the $z$ component only, of type $\bar q+\l \f$, with $\f=(0,\zeta) \in \mathcal{C}_c^{\infty}(0,1)$; such variations are in $K_l$, whenever $|\l|$ is not too large. By computing again \eqref{extremality} in the present case, we obtain the desired result thanks to the boundedness of $\pa_z V_0$.
\end{proof}

The following lemma gives a more precise description of the set $T_c(\bar q)$: it is a discrete and finite set, that is, \emph{the collisions are isolated}. This is a generalization of a known fact in our particular setting, see \cite{BaFeTe08, FeTe, SoTe}.

\begin{lemma}\label{collisions isolated}
The set $T_c(\bar q)$ is discrete and has a finite number of elements. 
\end{lemma}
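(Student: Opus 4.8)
The plan is to control the partial moment of inertia $I(t):=\frac{1}{2}|\bar u(t)|^2=\frac{1}{2}\bar\rho(t)^2$ and to show that it is \emph{strictly convex} near every collision time, which forces the collisions to be isolated. I would work on the open set $\{\bar u \neq 0\}$, where by Lemma \ref{smoothness of z} the path $\bar q$ is $\mathcal{C}^2$ and solves \eqref{eq q_k^h}, so that $I \in \mathcal{C}^2$ there and $\ddot I = |\dot{\bar u}|^2 + \bar u \cdot \ddot{\bar u}$. Using the $u$-component of \eqref{eq q_k^h}, namely $\bar\omega^2 \ddot{\bar u} = -m|\bar u|^{-\alpha-2}\bar u + \nabla_u V_0$, the virial-type identity $\bar u \cdot \nabla_u\!\big(m/(\alpha|\bar u|^\alpha)\big) = -m|\bar u|^{-\alpha}$, and the energy conservation $\frac{1}{2}\bar\omega^2|\dot{\bar q}|^2 = V(\bar q)+h$ (Proposition \ref{conservazione dell'energia}) to eliminate $|\dot{\bar u}|^2$, I obtain
\[
\ddot I = \frac{m}{\bar\omega^2}\,\frac{2-\alpha}{\alpha}\,|\bar u|^{-\alpha} + R(t),
\]
where $R$ gathers the contributions of $V_0$, $\nabla_u V_0$, $h$ and $\dot{\bar z}^2$.

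Next I would note that $R$ is bounded on $\Gamma$: indeed $V_0 \in \mathcal{C}^1(\overline{\Gamma})$ and $\bar z \in \mathcal{C}^2([0,1])$ by Lemma \ref{smoothness of z}, so $V_0$, $\nabla_u V_0$ and $\dot{\bar z}$ are all bounded there. Since $\alpha \in (0,2)$ we have $(2-\alpha)/\alpha>0$, so the singular term dominates: there exist $r_0>0$ and constants $c,C>0$ such that $\ddot I(t) \ge c\,|\bar u(t)|^{-\alpha} - C > 0$ whenever $0<|\bar u(t)|<r_0$. In particular $I$ is strictly convex on every interval on which $0<|\bar u|<r_0$.

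For the discreteness, suppose $T_c(\bar q)$ admits an accumulation point $t^*$. Since $T_c(\bar q)$ is closed we have $\bar u(t^*)=0$, and by continuity there is $\delta>0$ with $|\bar u|<r_0$ on $(t^*-\delta,t^*+\delta)$. Choosing two collision times $a<b$ in this neighbourhood, the function $I$ is continuous and non-negative on $[a,b]$ and vanishes at the endpoints; its maximum over $[a,b]$ cannot be attained at an interior point where $\bar u \neq 0$, since there $I$ is strictly convex and has no interior local maximum. Hence the maximum is $0$, forcing $\bar u \equiv 0$ on $[a,b]$, which contradicts $\mathcal{M}_h(\bar q)<+\infty$ (as $V(\bar q)\equiv+\infty$ on that interval). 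Therefore $T_c(\bar q)$ has no accumulation point, i.e. it is discrete; being a discrete closed subset of the compact interval $[0,1]$, it is finite.

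The genuinely delicate point is the sign computation for $\ddot I$: one must check that the singular parts of the kinetic energy and of $\bar u\cdot\nabla_u V$ combine, through the cancellation above, into a \emph{strictly positive} multiple of $|\bar u|^{-\alpha}$, which is exactly where the hypothesis $\alpha<2$ enters. Once strict convexity near collisions is secured, the topological conclusion is routine. I would finally remark that the argument uses only the boundedness of $V_0$ and $\nabla_u V_0$, and not the cylindrical symmetry of $V_0$, so it carries over verbatim to the planar setting required in the proof of Theorem \ref{thm: main 3}.
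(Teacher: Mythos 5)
Your proof is correct and follows essentially the same route as the paper: both rest on the modified Lagrange--Jacobi identity obtained from the motion equation and the conservation of energy, which shows that $\ddot I$ behaves like a positive multiple of $|\bar u|^{-\alpha}$ near a collision (this is where $\alpha<2$ enters), with the remainder bounded thanks to $V_0\in\mathcal{C}^1(\overline{\Gamma})$ and the boundedness of $\dot{\bar z}$ from Lemma \ref{smoothness of z}. The only cosmetic difference is the endgame: the paper evaluates $\ddot I$ at interior maximum points of $I$ on the intervals between consecutive collisions and contradicts $\ddot I(\xi_n)\le 0$, whereas you use strict convexity together with the zero boundary values to force $I\equiv 0$ and contradict $\mathcal{M}_h(\bar q)<+\infty$ --- the same convexity argument in a slightly different wrapper.
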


\begin{proof}
Assume by contradiction that $t_0$ is an accumulation point in the set $T_c(\bar q)$. Then there exists a sequence of intervals $((a_n,b_n))$, with $(a_n,b_n) \subset [0,1]$, such that $a_n \to t_0$ and $b_n \to t_0$ as $n \to \infty$, $\bar u(a_n)=0=\bar u(b_n)$ for every $n$, and $|\bar u(t)|>0$ for every $t \in (a_n,b_n)$. Let us set $I(t):= |\bar u(t)|^2$. Since $t \mapsto \bar u(t)$ is a classical solution of \eqref{eq q_k^h} for $t \in (a_n,b_n)$, we can differentiate twice $I$; using the conservation of the energy and the equation of $\bar u$, we obtain the following modified Lagrange-Jacobi identity:
\begin{align*}
\ddot{I} & = 2 |\dot{\bar u}|^2+2 \langle \bar u, \ddot{\bar u} \rangle \\
&=\frac{4h}{\bar \o^2}+\frac{2}{\a \bar \o^2}(2-\a)\frac{m}{|\bar u|^\a}-2|\dot{\bar z}|^2  +\frac{4}{\bar \o^2}V_0(\bar q) + \frac{2}{\bar \o^2}  \partial_\rho V_0(|\bar u|,\bar z) |\bar u|.
\end{align*}
Let $\xi_n \in (a_n,b_n)$ be a maximum point of $I$ in $(a_n,b_n)$; by maximality $\ddot{I}(\xi_n) \leq 0$ for every $n$. On the other hand, since $\a \in (0,2)$, in a neighbourhood of $t_0$ the second term in the expression of $\ddot{I}$ becomes arbitrarily large, while the other terms are bounded (we use the previous lemma to deduce that $\dot{\bar z}$ is bounded in $[0,1]$); therefore
\[
\lim_{n \to \infty} \ddot{I}(\xi_n)=+\infty,
\]
a contradiction. Each collision time is isolated and, by compactness, the interval $[0,1]$ contains only a finite number of them. 
\end{proof}

By the previous proof, we deduce also a useful property of the function $I$ when $q$ approaches the singularity.

\begin{corollary}\label{convexity of I}
If $t_0 \in T_c(\bar q)$, then there exists a neighbourhood of $t_0$ such that the function $I(t)= |\bar u(t)|^2$ is strictly convex.
\end{corollary}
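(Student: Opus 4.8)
The plan is to read off the conclusion directly from the modified Lagrange-Jacobi identity already established in the proof of Lemma \ref{collisions isolated}, namely
\[
\ddot{I} = \frac{4h}{\bar \o^2}+\frac{2}{\a \bar \o^2}(2-\a)\frac{m}{|\bar u|^\a}-2|\dot{\bar z}|^2 +\frac{4}{\bar \o^2}V_0(\bar q) + \frac{2}{\bar \o^2} \pa_\rho V_0(|\bar u|,\bar z)\, |\bar u|,
\]
which holds on any interval free of collisions. First I would invoke Lemma \ref{collisions isolated} to fix $\delta>0$ so small that $t_0$ is the only collision time in $(t_0-\delta,t_0+\delta)$; on the two punctured pieces $(t_0-\delta,t_0)$ and $(t_0,t_0+\delta)$ the path $\bar q$ solves \eqref{eq q_k^h} by Lemma \ref{smoothness of z}, so $I\in\mathcal C^2$ there and the identity above is available.

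Next I would show that $\ddot I>0$ on a (possibly smaller) punctured neighbourhood of $t_0$. As $t\to t_0$ we have $|\bar u(t)|\to 0$, hence the singular term $\tfrac{2(2-\a)m}{\a\bar\o^2|\bar u|^\a}$ tends to $+\infty$ because $\a\in(0,2)$; all the remaining terms stay bounded, the term $|\dot{\bar z}|^2$ by the $\mathcal C^2$-smoothness of $\bar z$ from Lemma \ref{smoothness of z}, and the terms involving $V_0$ and $\pa_\rho V_0$ because $V_0\in\mathcal C^1(\overline\Gamma)$ with $\overline\Gamma$ bounded while $|\bar u|$ is bounded. Thus there is $\delta'\in(0,\delta]$ with $\ddot I(t)>0$ for all $t\in(t_0-\delta',t_0)\cup(t_0,t_0+\delta')$, so $\dot I$ is strictly increasing on each of the two half-intervals.

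The one genuine subtlety, and the step I expect to require the most care, is upgrading positivity of $\ddot I$ on the punctured neighbourhood to strict convexity of $I$ across the non-smooth point $t_0$, where $\ddot I$ is not defined (the velocity blows up at the collision). Here I would use that $I\ge 0=I(t_0)$, so $t_0$ is a local minimum of the continuous function $I$. Since $\dot I$ is increasing on $(t_0-\delta',t_0)$, it must satisfy $\dot I\le 0$ there (otherwise $I$ would be strictly increasing up to $t_0$, contradicting $I(t_0)=0<I(t)$), and being monotone and bounded above it has a limit $L^-\le 0$ at $t_0^-$; symmetrically $\dot I\ge 0$ on $(t_0,t_0+\delta')$ with limit $L^+\ge 0$ at $t_0^+$. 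As $I$ is continuous on the whole interval, strictly convex on each open half, and satisfies the junction condition $L^-\le 0\le L^+$ (left derivative not exceeding right derivative), it is strictly convex on $(t_0-\delta',t_0+\delta')$, which is the assertion. This last convexity bookkeeping is elementary but must be carried out carefully precisely because $I$ fails to be twice differentiable exactly at the collision instant.
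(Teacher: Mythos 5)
Your proof is correct and follows essentially the same route as the paper, which deduces the corollary directly from the modified Lagrange--Jacobi identity established in the proof of Lemma \ref{collisions isolated} (so that $\ddot I\to+\infty$, hence $\ddot I>0$, on a punctured neighbourhood of the collision time). Your additional bookkeeping at the junction $t_0$ --- using $I\ge 0=I(t_0)$ to get $L^-\le 0\le L^+$ and then gluing the two strictly convex halves --- correctly supplies the detail the paper leaves implicit.
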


Let us conclude this preliminary subsection with a result which is a consequence of the peculiar form of $V_0$, specified by Definition \ref{def: local interaction}. This means that what follows cannot be taken into account for the proof of Theorem \ref{thm: main 3}.

\begin{lemma}\label{lem: angolo costante}
Let $(c,d) \subset [0,1]$ a connected component of $[0,1] \setminus T_c(\bar q)$. Then $\bar \t$ is piecewise constant in $(c,d)$.
\end{lemma}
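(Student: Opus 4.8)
The plan is to show that the conserved angular momentum vanishes on $(c,d)$, which immediately forces $\dot{\bar\theta}\equiv 0$ there. First I would exploit the special structure of $V_0$: since in the cylindrical neighbourhood $\Gamma$ the potential depends on $u$ only through $|u|=\rho$, it is invariant under rotations about the axis $r_k=\{u=0\}$. Writing the $u$-component of equation \eqref{eq q_k^h}, the force $\nabla_u V$ is purely radial, so $\bar u\land\ddot{\bar u}=0$ and hence $\frac{d}{dt}(\bar u\land\dot{\bar u})=0$; thus the angular momentum $\mathcal{C}(t):=\bar u(t)\land\dot{\bar u}(t)=\bar\rho^2\dot{\bar\theta}$ is constant on $(c,d)$ (cf. Remark \ref{rem: su V 3D}, part $ii$). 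This is precisely the step that uses the hypothesis $V_0=V_0(|u|,z)$, and which therefore cannot be transported to the planar Theorem \ref{thm: main 3}.

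Next I would argue that this constant is in fact zero. Since we are assuming a collision occurs, $T_c(\bar q)\neq\emptyset$, and by Lemma \ref{collisions isolated} it consists of finitely many isolated points $t_1<\dots<t_N$; consequently every connected component of $[0,1]\setminus T_c(\bar q)$ has at least one endpoint, say $t_0$, lying in $T_c(\bar q)$, so that $\bar\rho(t)\to 0$ as $t\to t_0$. I would then invoke the conservation of energy \eqref{oss su omega}, written as $\frac{1}{2}|\dot{\bar q}|^2=\bar\omega^{-2}(V(\bar q)+h)$, and decompose $|\dot{\bar q}|^2=\dot{\bar\rho}^2+\bar\rho^2\dot{\bar\theta}^2+\dot{\bar z}^2=\dot{\bar\rho}^2+\mathcal{C}^2/\bar\rho^2+\dot{\bar z}^2$. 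Discarding the non-negative terms $\dot{\bar\rho}^2$ and $\dot{\bar z}^2$ and inserting the explicit form of $V$ gives
\[
\frac{\mathcal{C}^2}{2\bar\rho^2}\le\frac{1}{\bar\omega^2}\left(\frac{m}{\alpha\bar\rho^\alpha}+V_0(\bar\rho,\bar z)+h\right).
\]
Multiplying by $\bar\rho^2$ yields $\mathcal{C}^2/2\le\bar\omega^{-2}\left(\frac{m}{\alpha}\bar\rho^{2-\alpha}+(V_0+h)\bar\rho^2\right)$, whose right-hand side tends to $0$ as $t\to t_0$, since $\alpha<2$ and $V_0$ is bounded on $\overline{\Gamma}$. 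Hence $\mathcal{C}=0$ on $(c,d)$. Finally, the interior of $(c,d)$ is collision-free, so $\bar\rho>0$ there, and $\mathcal{C}=\bar\rho^2\dot{\bar\theta}=0$ forces $\dot{\bar\theta}\equiv 0$; thus $\bar\theta$ is constant — a fortiori piecewise constant — on $(c,d)$.

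The only genuinely delicate point is the vanishing of the angular momentum, and it is exactly here that the subcritical exponent $\alpha\in(0,2)$ is indispensable: the centrifugal energy $\mathcal{C}^2/(2\bar\rho^2)$ blows up like $\bar\rho^{-2}$, which would overwhelm the Keplerian potential, blowing up only like $\bar\rho^{-\alpha}$, unless $\mathcal{C}=0$. Everything else follows from the rotational symmetry of $V$ and from the already established fact (Lemma \ref{collisions isolated}) that the collision times are isolated, so that each component abuts a collision.
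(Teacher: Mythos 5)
Your proposal is correct and follows essentially the same route as the paper: conservation of the angular momentum $\mathfrak{C}_{\bar q}=\bar\rho^2\dot{\bar\theta}$ from the rotational invariance of $V_0(|u|,z)$ (Remark \ref{rem: su V 3D}), followed by the energy identity near a collision endpoint, where the centrifugal term $\mathfrak{C}^2/(2\bar\rho^2)$ dominates the $\bar\rho^{-\alpha}$ singularity (since $\alpha<2$) unless $\mathfrak{C}=0$. Your version merely makes explicit two points the paper leaves implicit — that each component of $[0,1]\setminus T_c(\bar q)$ abuts a collision time, and the discard-and-multiply-by-$\bar\rho^2$ manipulation — so no substantive difference.
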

\begin{proof}
We show that $\dot{\bar \t}$ vanishes identically in $(c,d)$. Let 
\[
\mathfrak{C}_{\bar q}(t)=|\bar u(t) \land \dot{\bar u}(t)| = \bar{\rho}^2(t) \dot{\bar \t}(t)
\] 
be the angular momentum of $\bar q$ with respect to the $z$ axis. As observed in Remark \ref{rem: su V 3D}, it is constant in $(c,d)$, because therein $\bar q$ solves \eqref{eq q_k^h}. By the conservation of the energy, we have
\[
\frac{\bar \o^2}{2} \dot{\bar \rho}^2+ \frac{\bar \o^2}{2 \bar{\rho}^2} \mathfrak{C}_{\bar q}^2+\frac{\bar \o^2}{2} \dot{\bar{z}}^2 - \frac{m}{\a \bar \rho^\a}- V_0(\bar q)= h
\]
in $(c,d)$, so that
\[
 \frac{\a  \bar 	\o^2 \mathfrak{C}_{\bar q}^2 - 2 m \bar \rho^{2-\a} - V_0(\bar q) \bar \rho^2}{2 \a \bar \rho^2} \le h
\]
in $(c,d)$. A necessary condition in order to satisfy the previous relation when $\bar \rho \to 0$ is that $\mathfrak{C}_{\bar q} = 0$, i.e. $\dot{\bar \t}=0$ in $(c,d)$.
\end{proof}

The previous lemmas imply that, if $\bar q$ is a collision minimizer of $\mathcal{M}_h$, then $T_c(\bar q)=\{t_1,\ldots,t_m\}$ for some $m \in \N$, and (being $t_0=0$ and $t_{m+1}=1$)
\[
\bar \t(t)= \bar{\t}_j \qquad \forall t \in (t_j,t_{j+1}), \ j=0,\ldots,m,
\]
where $\bar \t_0= 0$ and $\bar \t_{m+1}=\hat \t$ (for the reader's convenience, we recall that 
\[
\bar u(0) = \bar \rho(0) \exp\{i 0\} \quad \text{and} \quad \bar u(1) =\bar \rho(1) \exp\{i \hat \theta\}).
\] 
Now, Definition \ref{def: local interaction} says that $V(q)$ is independent on $\t$ (at least in a cylindrical neighbourhood of $\bar q([0,1])$. Therefore, any function $\tilde{q}$ defined by
\[
\begin{cases}
\tilde \rho(t)= \bar \rho(t) \\
\tilde z(t)= \bar z(t) \\
\tilde \t(t)= 0  \quad t \in [0,t_1),  \quad \tilde \t(t)= \hat \t \quad t \in (t_m,1] \\
\tilde \t(t)=const.  \quad t \in (t_j,t_{j+1}), \ j=1,\ldots,m-1
\end{cases}
\]
is a minimizer of $\mathcal{M}_h$ in $K_l$. As a consequence, it is not restrictive to assume that, if $\bar q$ is a collision minimizer of $\mathcal{M}_h$ in $K_l$, it results 
\beq\label{change the angle}
0 < \bar \t_{j+1}-\bar \t_j \le \hat \t + 2l\pi.
\eeq
In the first picture we represent a minimizer $\bar u$ of $\mathcal{M}_h$ in $K_l$ with $l=0$, not satisfying the \eqref{change the angle}; in the second picture we consider another function, $\tilde u$, obtained by $\bar u$ after a change in the angular component, in such a way that the \eqref{change the angle} is fulfilled. As $V$ does not depend on $\theta$, $tilde u$ is another minimizer of $\mathcal{M}_h$ in $K_l$ with $l=0$.
\begin{center}
\begin{tikzpicture}[>=stealth, scale=1.5]
\filldraw (0.0,0.0) circle (1pt)
          (-0.2,0.8) circle (1pt)
          (0.9,0) circle (1pt)
          (-0.8,-0.1) circle (1pt);
\draw (0.9,0)--(0,0)--(-0.2,0.8)--(0,0)--(-0.8,-0.1); 
\draw[dashed] (0,0)--(-0.7,0.7);
\draw[dashed] (3mm,0mm) arc (0:135:3mm);
\draw[font=\footnotesize] (0.4,0.2) node{$\hat \t$};
\draw[font=\footnotesize] (0,-0.2) node{$(0,0)$};
\draw[font=\footnotesize] (0.9,0.2) node{$\bar u(0)$};
\draw[font=\footnotesize] (0.1,0.8) node{$\bar u(1)$};
\draw[font=\footnotesize] (-0.8,0.1) node{$\bar u(1/2)$};
\end{tikzpicture}
$\qquad \qquad$
\begin{tikzpicture}[>=stealth, scale=1.5]
\filldraw (0.0,0.0) circle (1pt)
          (-0.2,0.8) circle (1pt)
          (0.9,0) circle (1pt)
          (0.65,0.4) circle (1pt);
\draw (0.9,0)--(0,0)--(-0.2,0.8)--(0,0)--(0.65,0.4); 
\draw[dashed] (0,0)--(-0.7,0.7);
\draw[dashed] (3mm,0mm) arc (0:135:3mm);
\draw[font=\footnotesize] (0.4,0.2) node{$\hat \t$};
\draw[font=\footnotesize] (0,-0.2) node{$(0,0)$};
\draw[font=\footnotesize] (0.9,0.2) node{$\tilde u(0)$};
\draw[font=\footnotesize] (0.1,0.8) node{$\tilde u(1)$};
\draw[font=\footnotesize] (0.65,0.6) node{$\tilde u(1/2)$};
\end{tikzpicture}
\end{center}

\subsection{The obstacle problems}\label{sub: obstacle}

We pass from a global analysis of the minimizer $\bar q$ to a local study in a neighbourhood of a collision. This is possible because, as shown in Lemma \ref{collisions isolated}, the collisions are isolated: if $\bar q$ has a collision at time $t_1$, then there exist $c,d \in [0,1]$ such that $c<t_1<d$, and $t_1$ is the unique collision time in $[c,d]$; we can choose $c$ and $d$ in such a way that 
\begin{itemize}
\item the function $I=|\bar u|^2$ is strictly convex in $(c,d)$ (see Corollary \ref{convexity of I});
\item $|\bar u(c)|=|\bar u(d)|$; we set $\hat \rho:= |\bar u(c)|$, and choose $c,d$ so that $\hat \rho<d_k$ (where we recall that $d_k$ has been introduced in Definition \ref{def: local interaction}).
\end{itemize} 
We set $\hat \t_1:= \bar \t(d)-\bar \t(c)$; as already observed, it is not restrictive to assume that $0< \hat \t_1 \le \hat \t+2l \pi$. Note that, if $\hat \t_1=2\pi$ (which is admissible for $\alpha \in (1,2)$), this does not exclude that $\bar p_1 = \bar p_2$. 

Let $\Gamma_1$ be a cylindrical neighborhood of $q|_{[c,d]}$, of type
\[
\Gamma_1:= \left\{ (u,z) \in \R^3: |u| < \hat \rho, \ z \in \left(\inf_{t \in [c,d]} \bar z(t) - \delta, \sup_{t \in [c,d]} \bar z(t) + \delta\right) \right\}
\]
for some $\delta>0$. Since $\hat \rho<d_k$, the set $\Gamma_1$ is compactly contained in $\Xi_k$ (introduced in Definition \ref{def: local interaction}).

Let
\[
\wh{\mathcal{K}}:= \left\lbrace q \in H^1\left([c,d]\right) \left| \begin{array}{l}
\text{$|u(t)| \neq 0$ and $q(t) \in \overline{\Gamma}_1$ for every $t \in [c,d]$,}\\
\text{$q(c)=\bar p_1$, $q(d)= \bar p_2$, and the function} \\
\begin{cases} \bar q(t) & t \in [0,c) \cup (d,1] \\ q(t) & t \in [c,d] \end{cases} \text{ belongs to $K_l$,} \\
\end{array}  \right. \right\},
\]
and let $\mathcal{K}$ be its closure with respect to the weak topology of $H^1$ (we remark that $\mathcal{K} \setminus \hat{\mathcal{K}}$ consists in collision functions and functions leaning on the boundary $\pa \Gamma_1$). We consider the restriction of the Maupertuis' functional (still denoted $\mathcal{M}_h$, with some abuse of notation) to $\mathcal{K}$:
\[
\mathcal{M}_h(q)=\frac{1}{2} \int_c^d |\dot{q}(t)|^2\,dt \int_c^d \left( V(q(t))+h\right)\,dt.
\]
\begin{lemma}\label{lem: proprieta' di M_h}
The functional $\mathcal{M}_h$ is weakly lower semi-continuous and coercive in $\mathcal{K}$. Moreover, $\mathcal{M}_h(q) \ge C>0$ for every $q \in \mathcal{K}$. 
\end{lemma}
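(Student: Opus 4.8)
The plan is to verify the three claimed properties of $\mathcal{M}_h$ on $\mathcal{K}$ separately, exploiting the fact that the constraint $q(t) \in \overline{\Gamma}_1$ confines the competitors to a bounded region where the potential has the explicit form $V(q) = m/(\alpha |u|^\alpha) + V_0(|u|,z)$ with $V_0$ continuous (hence bounded) on the compact set $\overline{\Gamma}_1$. First I would establish the lower bound $\mathcal{M}_h(q) \ge C > 0$, since this is the quantitative statement that makes the others meaningful. The key observation is that any $q \in \mathcal{K}$ satisfies $q(c) = \bar p_1$ and $q(d) = \bar p_2$ with $|\bar u(c)| = |\bar u(d)| = \hat\rho > 0$, and moreover $q \in K_l$ forces a nontrivial angular displacement $\hat\theta + 2l\pi$ between the endpoints whenever $l \neq 0$ or $\hat\theta \neq 0$; in either case the path cannot be constant, so $\int_c^d |\dot q|^2 \, dt \ge \|q(d) - q(c)\|^2/(d-c) > 0$ by Cauchy--Schwarz. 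For the second factor, I would use Remark \ref{rem: su V 3D}(iii): shrinking $d_k$ if necessary, one has $V(q) + h \ge C_0 > 0$ on $\overline{\Xi}_k \supset \overline{\Gamma}_1$ (the Hill's region condition), whence $\int_c^d (V(q)+h)\,dt \ge C_0 (d-c) > 0$. Multiplying the two strictly positive lower bounds yields $\mathcal{M}_h(q) \ge C > 0$, uniform over $\mathcal{K}$.

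Next I would treat coercivity. Since every competitor is confined to the bounded set $\overline{\Gamma}_1$, the $L^\infty$-norm of $q$ is automatically controlled, so coercivity reduces to showing that $\mathcal{M}_h(q) \to +\infty$ forces (or is forced by) control of $\|\dot q\|_2$. Using again the Hill's region bound $V(q) + h \ge C_0 > 0$, I would estimate
\[
\mathcal{M}_h(q) = \frac{1}{2}\|\dot q\|_2^2 \int_c^d (V(q)+h)\,dt \ge \frac{C_0(d-c)}{2}\, \|\dot q\|_2^2,
\]
so that $\mathcal{M}_h(q) \to +\infty$ whenever $\|\dot q\|_{H^1} \to +\infty$; combined with the uniform $L^\infty$-bound this gives coercivity of $\mathcal{M}_h$ on $\mathcal{K}$ with respect to the $H^1$-norm.

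The main obstacle is the weak lower semicontinuity, and specifically the behaviour of the singular term $m/(\alpha|u|^\alpha)$ under weak $H^1$-convergence. The kinetic factor $\tfrac12\|\dot q\|_2^2$ is weakly lower semicontinuous by the standard convexity argument, and $q_n \rightharpoonup q$ in $H^1$ implies uniform convergence, so $V_0(|u_n|, z_n) \to V_0(|u|,z)$ uniformly on $[c,d]$ by continuity of $V_0$ on $\overline{\Gamma}_1$. The delicate point is the potential factor $\int_c^d (V(q_n)+h)\,dt$: on the collision functions of $\mathcal{K} \setminus \widehat{\mathcal{K}}$ the integrand $m/(\alpha|u_n|^\alpha)$ may blow up. Here I would argue that since $\alpha \in (0,2)$ and the limit $u$ has at most isolated zeros (or, in the worst case, a measure-zero collision set by Lemma \ref{collisions isolated}), the map $u \mapsto \int_c^d |u|^{-\alpha}\,dt$ is weakly lower semicontinuous: writing it as $\int_c^d g(u_n)\,dt$ with $g(u) = |u|^{-\alpha}$ a nonnegative lower semicontinuous function, uniform convergence $u_n \to u$ together with Fatou's lemma gives $\int_c^d |u|^{-\alpha}\,dt \le \liminf_n \int_c^d |u_n|^{-\alpha}\,dt$. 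Finally, since both the kinetic factor and the potential factor are nonnegative and weakly lower semicontinuous, their product is weakly lower semicontinuous on $\mathcal{K}$ (using that liminf of products of nonnegative sequences dominates the product of the liminfs), which completes the proof.
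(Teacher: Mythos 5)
Your coercivity and weak lower semicontinuity arguments are correct and follow the same route as the paper: confinement of all competitors in the bounded set $\overline{\Gamma}_1 \subset\subset \Xi_k$ controls $\|q\|_2$, the Hill's-region inequality gives $\int_c^d\left(V(q)+h\right)\ge C_0(d-c)>0$, hence $\mathcal M_h(q)\ge \tfrac{1}{2}C_0(d-c)\|\dot q\|_2^2$ and coercivity; the paper simply declares the semicontinuity ``standard'' once $V$ has the explicit Keplerian form on $\overline{\Gamma}_1$, and your elaboration (convexity for the kinetic factor, uniform convergence for $V_0$, Fatou for the singular term, and $\liminf (a_nb_n)\ge \liminf a_n\cdot\liminf b_n$ for nonnegative sequences) is a correct way of filling that in.

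The one step that fails is the uniform lower bound $\int_c^d|\dot q|^2\,dt\ge |q(d)-q(c)|^2/(d-c)>0$. The paper explicitly allows $\bar p_1=\bar p_2$ (this occurs when $\hat\t_1=2\pi$, which is admissible for $\a\in(1,2)$; see the discussion preceding the definition of $\Gamma_1$), and in that case the right-hand side of your Cauchy--Schwarz estimate is zero: non-constancy of each individual competitor does not by itself produce a lower bound that is \emph{uniform} over $\mathcal K$, which is what the constant $C$ in the statement requires. The conclusion is still true, but it needs a different argument: every $q=(u,z)\in\mathcal K$ either has a collision, in which case $u$ travels from $|u|=\hat\rho$ to $|u|=0$ and the Euclidean length of the path is at least $\hat\rho$; or it is collision-free, hence (being a uniform limit of elements of $\wh{\mathcal K}$ and bounded away from the axis) it winds by the angle $\hat\t_1>0$, so that either it enters $\{|u|<\hat\rho/2\}$ (length at least $\hat\rho/2$) or it stays in $\{|u|\ge\hat\rho/2\}$ and the winding forces length at least $\hat\t_1\hat\rho/2$. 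In all cases Cauchy--Schwarz gives $\|\dot q\|_2^2\ge \ell^2/(d-c)$ with $\ell:=\min\left\{\hat\rho/2,\,\hat\t_1\hat\rho/2\right\}$, which combined with the lower bound on $\int_c^d\left(V(q)+h\right)$ yields $\mathcal M_h(q)\ge C>0$ uniformly. (To be fair, the paper's own proof is also silent on this ``Moreover'' clause; you attempted to prove it, but the justification given does not cover the admissible case $\bar p_1=\bar p_2$.)
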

\begin{proof}
The weak lower semi-continuity of $\mathcal{M}_h$ is standard, once that we noticed that, since the image of the functions of $\mathcal{K}$ is confined in $\Gamma_1 \subset \Xi_k$ (recall that $\Xi_k$ has been introduced in Definition \ref{def: local interaction}), it results
\[
V(q(t)) = \frac{M}{\alpha |u(t)|^{\alpha}} + V_0(u(t),z(t)) \qquad \forall t \in [c,d], 
\]
for every $q=(u,z) \in \mathcal{K}$.

As far as the coercivity is concerned, first of all we observe that
\begin{equation}\label{positività di int pot}
\int_c^d \left( V(q)+h \right) \ge C > 0 \qquad  \forall q \in \mathcal{K};
\end{equation}
this follows from the choice of $\Xi_k$ (see Remark \ref{rem: su V 3D}) and the fact that $\Gamma_1 \subset \subset \Xi_k$. Now, let us consider a sequence $(q_n) \subset \mathcal{K}$ such that $\|q_n\| \to +\infty$. The boundedness of $\Gamma_1$ implies that there exists $C>0$ such that $\|q\|_2 \le C (d-c)$ for every $q \in \mathcal{K}$. Consequently, $\|\dot{q}_n\|_2 \to +\infty$, and thanks to \eqref{positività di int pot} it results $\mathcal{M}_h(q_n) \to +\infty$. 

\end{proof}

Although the functional $\mathcal{M}_h$ is not addictive, by Proposition \ref{localizzazione dei minimi_M} we know that $\bar q|_{[c,d]}$ is a minimizer of $\mathcal{M}_h$ on $\mathcal{K}$. 

\begin{lemma}\label{lem: unique minimum}
The function $\bar q|_{[c,d]}$ is the unique minimizer of $\mathcal{M}_h$ in $\mathcal{K}$.
\end{lemma}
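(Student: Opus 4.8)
The plan is to show that every minimizer of $\mathcal{M}_h$ in $\mathcal{K}$ shares the structural properties already established for $\bar q$, and then to extract uniqueness from a strict convexity argument. First I would observe that all the results of Subsection \ref{sub: basic 3d} were obtained using only the minimality of the competitor together with the local axially Keplerian form of $V$ inside $\Gamma_1$, which is compactly contained in $\Xi_k$. Since every $q \in \mathcal{K}$ is confined to $\overline{\Gamma}_1$, where $V(q) = m/(\a|u|^\a) + V_0(|u|,z)$, the same arguments show that any minimizer $q=(u,z)$ of $\mathcal{M}_h$ in $\mathcal{K}$ solves $\o_q^2 \ddot q = \n V(q)$ on every collision-free subinterval (with its own constant $\o_q$), conserves the energy and the angular momentum $|u \land \dot u|$, has a discrete and finite collision set (Lemma \ref{collisions isolated}), and---by Lemma \ref{lem: angolo costante}---if it collides then its angular momentum vanishes, so that its angular component is piecewise constant with jumps only at the collision times.

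Next I would reduce the problem to the half-plane $\{(\rho,z): \rho \ge 0\}$. Because $V$ depends on $u$ only through $|u|=\rho$, replacing the angular component of any competitor by the optimal piecewise-constant one (exactly as in the passage leading to \eqref{change the angle}) does not increase $\mathcal{M}_h$ and preserves the class $K_l$. Hence a minimizer is completely determined by its radial and vertical profile $(\rho,z)$ together with the finite list of angular jumps dictated by the topological constraint, and the analysis reduces to the planar minimization for the pair $(\rho,z)$ with effective potential $m/(\a\rho^\a)+V_0(\rho,z)$ and a reflecting singularity at $\rho=0$. Since $\bar q$ itself is a minimizer (Proposition \ref{localizzazione dei minimi_M}), it suffices to prove that this reduced problem admits a single minimizer, which must then coincide with $\bar q$.

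Finally, for the reduced problem I would establish uniqueness through strict convexity. The singular radial term $\rho \mapsto m/(\a\rho^\a)$ is strictly convex, while $V_0$ is a $\mathcal{C}^1$, hence bounded, perturbation on the small set $\Gamma_1$; combined with the strict convexity of $I=|u|^2$ near each collision (Corollary \ref{convexity of I}), this should allow one to compare two putative minimizers and conclude that their radial and vertical profiles agree, after which the axial symmetry and the fixed homotopy class force the angular components to agree as well, giving $q \equiv \bar q$. The main obstacle is precisely the product structure of the Maupertuis' functional: $\mathcal{M}_h = \frac12\int_c^d |\dot q|^2 \int_c^d (V+h)$ is not convex as it stands, so the convexity must be extracted after passing to the parametrization-invariant Jacobi length $\ell(q)=\int_c^d \sqrt{2(V(q)+h)}\,|\dot q|\,dt$, which at the natural parametrization selected by the minimum satisfies $\ell = 2\sqrt{\mathcal{M}_h}$. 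One then has to control the interplay between the strictly convex singular term and the non-convex but bounded remainder $V_0$ on $\Gamma_1$, using the smallness of the neighbourhood; once geometric uniqueness of the minimizing curve is secured, the uniqueness of the natural parametrization upgrades it to uniqueness of the minimizer in $\mathcal{K}$.
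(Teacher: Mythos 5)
Your first paragraph (structural properties of an arbitrary minimizer of $\mathcal{M}_h$ in $\mathcal{K}$) is consistent with what the paper establishes, but the mechanism by which you propose to extract uniqueness does not work, and the step where uniqueness is actually supposed to happen is never carried out. The convexity you invoke is illusory: the singular term $m/(\alpha\rho^{\alpha})$ is convex in $\rho$ but constant in $z$, so it contributes nothing to convexity in the $z$-direction, while $V_0$ is only assumed to be of class $\mathcal{C}^1$ and need not be convex at all; shrinking $\Gamma_1$ does not change the second-order behaviour of $V_0$, so ``smallness of the neighbourhood'' cannot repair this. Worse, even granting a convex potential, neither $\mathcal{M}_h$ (a product of integrals) nor the Jacobi length $\mathcal{L}_h$ (reparametrization-invariant, hence never strictly convex along reparametrizations of a fixed curve, with an integrand $\sqrt{V(q)+h}\,|\dot q|$ that is not jointly convex in $(q,\dot q)$) is a convex functional, and $\mathcal{K}$ is not a convex set. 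Your closing clause, ``once geometric uniqueness of the minimizing curve is secured,'' is essentially the statement of the lemma and is left unproved. The reduction step is also shaky: redistributing the angle among several collision instants produces genuinely different minimizers with the same $(\rho,z)$ profile (the paper exhibits exactly this phenomenon at the end of Subsection 3.1), so identifying a minimizer with ``radial and vertical profile plus angular jumps'' presupposes that every competitor has a single collision in $[c,d]$, which you have not shown.

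The paper's proof uses none of this. It passes to the Jacobi length precisely because $\mathcal{L}_h$ is \emph{additive} and reparametrization-invariant: a putative second minimizer $\tilde q$ of $\mathcal{L}_h$ on $[c,d]$, glued to $\bar q$ on $[c-\delta,c]\cup[d,d+\delta]$, is still a minimizer of $\mathcal{L}_h$ on the larger interval, hence, away from collisions, a reparametrized classical solution of $\ddot x=\nabla V(x)$. Since the glued path coincides with $\bar q$ on $[c-\delta,c]$, it has the same position and velocity as $\bar q$ at $t=c$, and the uniqueness theorem for the Cauchy problem forces it to coincide with $\bar q$, as a curve, until the first encounter with the singular set; the symmetric argument from $t=d$ plus a final length comparison (any extra arc between the two collision times would strictly increase $\mathcal{L}_h$) closes the argument. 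This local ODE-uniqueness step is the ingredient your proposal is missing; convexity is not available here.
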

\begin{proof}
It is convenient to argue in terms of the Jacobi length $\mathcal{L}_h$, defined in the appendix, because this functional has the remarkable property of being addictive. 

Since $\bar q|_{[c,d]}$ is a minimizer of $\mathcal{M}_h$ in $\mathcal{K}$, it is a minimizer of $\mathcal{L}_h$ in the same class. To prove the desired result, we have to show that for every $q \in \mathcal{K}$ it results $\mathcal{L}_h(\bar q|_{[c,d]}) \le \mathcal{L}_h(q)$, and the equality holds if and only if $q$ can be obtained by $\bar q|_{[c,d]}$ by means of a re-parametrization.

Before proceeding with the core of the proof, we note that, thanks to our choice of $c,d$, there exists $\delta>0$ such that $t_1$ is the unique collision time of $\bar q$ in $[c-\delta,d+\delta]$. In particular, recalling Lemma \ref{smoothness of z}, we know that $\bar x(t):=\bar q(\bar \omega t)$ is a solution of $\bar \omega^2 \ddot{\bar q} = \nabla V\left(\bar q\right)$ for $t \in \left(c-\delta, t_1\right)$ and for $t \in (t_1, d+\delta)$.

Assume by contradiction that there exists a minimizer $\tilde q \in \mathcal{K}$ of $\mathcal{L}_h$, which cannot be obtained by $\bar q|_{[c,d]}$ by means of a re-parametrization. As $\mathcal{L}_h$ is addictive, the function
\[
\hat q(t):=\begin{cases}
\bar q(t) & t \in [0,c) \cap (d,1] \\
\tilde q(t) & t \in [c,d]
\end{cases}
\]
is a minimizer of $\mathcal{L}_h$ in $K_l$. Since $|\tilde u(c)|= \hat \rho$, there exists $0<\delta_1<t_1$ such that $\hat q(t) \not \in \Sigma$ for every $t \in [c-\delta,c+\delta_1]$. As illustrated in the appendix, it is then possible to re-parametrize $\hat q$ in order to obtain a smooth solution $\hat x$ of the equation $\ddot{x}=\nabla V(x)$ in a time interval $(\tau_1,\tau_2)$, such that
\[
\hat x(\tau_1) = \hat q(c-\delta)= \bar q(c-\delta) \quad \text{and} \quad \hat x(\tau_2) = \hat q(c+\delta_1)= \tilde q(c+\delta_1),
\]
and there exists $\tau_3 \in (\tau_1,\tau_2)$ such that $\hat x(\tau_3) = \bar q(c)$. By construction, $\hat x$ has to coincide with $\bar x$ in the time interval $(\tau_1,\tau_3]$, because $\hat q \equiv \bar q$ in $(c-\delta,c]$). As a consequence, $\dot{\hat x}(c^-) = \dot{\bar x}(c^-)$, and recalling that both $\hat x$ and $\bar x$ are regular in $c$, and that $\hat x(c) = \bar x(c)$, the uniqueness theorem for the initial value problems ensures that $\hat x \equiv \bar x$ until one of them met the singular set $\{u=0\}$; this implies that there exists a time interval $[c-\delta, \tilde t_1]$ such that $\hat q|_{[c-\delta, \tilde t_1]}$ can be obtained by $\bar q|_{[c-\delta,t_1]}$ with a re-parametrization. An analogue line of reasoning permits to deduce that there exists a time interval $[\tilde t_2,d+\delta]$, with $\tilde t_2 \ge \tilde t_1$, such that $\hat q|_{[\tilde t_2,d+\delta]}$ can be obtained by $\bar q|_{[t_1,d+\delta]}$ with a re-parametrization. If $\tilde t_2> \tilde t_1$ and $\tilde q$ does not rest in $\{u=0\}$ in $(\tilde t_1,\tilde t_2)$, then $\mathcal{L}_h(\tilde q) > \mathcal{L}_h(\bar q)$, in contradiction with the minimality of $\tilde q$; otherwise $\tilde q$ can be obtained by $\bar q|_{[c,d]}$ by means of a re-parametrization, in contradiction with our assumption.
\end{proof}

We introduce the subsets
\[
\mathcal{K}_{\eps}:= \left\{ q=(u,z) \in \mathcal{K}:  \min_{t \in [c,d]} |u(t)|=\eps \right\},
\]
and we consider the following \emph{obstacle problems}: for $\eps \geq 0$, we seek for minimizers of
\[
d(\eps):= \inf \left\lbrace \mathcal{M}_{h}(q) : q \in \mathcal{K}_\eps \right\rbrace .
\]
The value $d(0)$ is the infimum of $\mathcal{M}_h$ on the collision elements of $\mathcal{K}$, and, by assumption, is achieved by $\bar q|_{[c,d]}$. Furthermore, by Lemma \ref{lem: proprieta' di M_h}, and noting that $\mathcal{K}_{\eps}$ is closed in the weak topology of $H^1$, we deduce that the value $d(\eps)$ is achieved by a function $q_\eps \in \mathcal{K}_{\eps}$. We point out that for $\eps$ sufficiently small any function in $\mathcal{K}_\eps$ cannot be constant. Therefore, $d(\eps)>0$.

\begin{lemma}\label{lemma 2.6}
The function $\eps \mapsto d(\eps)$ is continuous in $\eps=0$.
\end{lemma}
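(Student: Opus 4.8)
The plan is to prove the two one–sided bounds $\limsup_{\eps\to0^+}d(\eps)\le d(0)$ and $d(0)\le\liminf_{\eps\to0^+}d(\eps)$ separately, keeping in mind that, by assumption, $d(0)=\mathcal{M}_h(\bar q|_{[c,d]})$ and that this minimum is attained.

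\emph{Lower bound.} We may assume $\liminf_{\eps\to0}d(\eps)<+\infty$, and choose $\eps_n\to0^+$ with $d(\eps_n)\to\liminf_{\eps\to0}d(\eps)$ and $\sup_n d(\eps_n)<+\infty$; let $q_n\in\mathcal{K}_{\eps_n}$ be a minimizer, $\mathcal{M}_h(q_n)=d(\eps_n)$. The coercivity of $\mathcal{M}_h$ on $\mathcal{K}$ (Lemma~\ref{lem: proprieta' di M_h}) then forces $(q_n)$ to be bounded in $H^1$, so up to a subsequence $q_n\rightharpoonup q_0$ weakly in $H^1$, hence uniformly. Writing $q_n=(u_n,z_n)$, the uniform convergence yields $\min_{[c,d]}|u_0|=\lim_n\eps_n=0$, so $q_0$ is a collision element; since $\mathcal{K}$ is weakly closed, $q_0\in\mathcal{K}_0$. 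By the weak lower semicontinuity of $\mathcal{M}_h$ (Lemma~\ref{lem: proprieta' di M_h}) and the definition of $d(0)$, we get $d(0)\le\mathcal{M}_h(q_0)\le\liminf_n\mathcal{M}_h(q_n)=\liminf_{\eps\to0}d(\eps)$.

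\emph{Upper bound: the competitor.} Starting from $\bar q$, I would build a family $\tilde q_\eps\in\mathcal{K}_\eps$ converging to $\bar q$. By Corollary~\ref{convexity of I} the set $\{t:\ |\bar u(t)|<\eps\}$ is, for $\eps$ small, a single interval $(s_\eps,\sigma_\eps)\ni t_1$ with $|\bar u(s_\eps)|=|\bar u(\sigma_\eps)|=\eps$ and $\sigma_\eps-s_\eps\to0$. Set $\tilde q_\eps=\bar q$ outside $(s_\eps,\sigma_\eps)$; on this interval keep the smooth component $\bar z$ of Lemma~\ref{smoothness of z}, freeze the radial component at the value $\eps$, and let the angle interpolate monotonically between the two constant values of $\bar\theta$ on either side of $t_1$ (Lemma~\ref{lem: angolo costante}), so that the total angular variation stays equal to $\hat\theta_1$. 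Then $\tilde q_\eps\in\overline{\Gamma}_1$, it has the prescribed endpoints, its rotation number is unchanged so that $\tilde q_\eps\in K_l$, and $\min_{[c,d]}|\tilde u_\eps|=\eps$; hence $\tilde q_\eps\in\mathcal{K}_\eps$ and $d(\eps)\le\mathcal{M}_h(\tilde q_\eps)$.

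\emph{Upper bound: the estimate (main obstacle).} It remains to check $\mathcal{M}_h(\tilde q_\eps)\to\mathcal{M}_h(\bar q)$, and this is the delicate point. Outside $(s_\eps,\sigma_\eps)$ nothing changes, while the contributions removed on $(s_\eps,\sigma_\eps)$ are tails of the convergent integrals $\int\frac12|\dot{\bar q}|^2$ and $\int(V(\bar q)+h)$ and vanish with $\sigma_\eps-s_\eps$. The danger is the term added by the singular part of $V$ on the plateau, namely $\frac{m}{\alpha\eps^{\alpha}}(\sigma_\eps-s_\eps)$: here the mere fact that $\sigma_\eps-s_\eps\to0$ does not suffice, since $\eps^{-\alpha}\to+\infty$. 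This is cleanest in terms of the Jacobi length $\mathcal{L}_h$, which is additive, invariant under reparametrization, and satisfies $\mathcal{M}_h=\tfrac14\mathcal{L}_h^{\,2}$ at the constant–energy parametrization (Cauchy–Schwarz), so that $d(\eps)=\tfrac14\bigl(\inf_{\mathcal{K}_\eps}\mathcal{L}_h\bigr)^2$. The arc of radius $\eps$ sweeping the angle $\hat\theta_1$ contributes to $\mathcal{L}_h$ an amount $\sim\sqrt{2m/\alpha}\,\hat\theta_1\,\eps^{\,1-\alpha/2}$, while the two removed radial dips down to the origin contribute $\sim\sqrt{2m/\alpha}\int_0^\eps\rho^{-\alpha/2}\,d\rho=O(\eps^{\,1-\alpha/2})$; since $\alpha<2$, both are $o(1)$. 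Hence $\mathcal{L}_h(\tilde q_\eps)\to\mathcal{L}_h(\bar q)$, and therefore $\limsup_{\eps\to0}d(\eps)\le\tfrac14\lim_{\eps\to0}\mathcal{L}_h(\tilde q_\eps)^2=\tfrac14\mathcal{L}_h(\bar q)^2=\mathcal{M}_h(\bar q)=d(0)$. Combined with the lower bound, this proves continuity at $\eps=0$. Equivalently, one may estimate $\mathcal{M}_h(\tilde q_\eps)$ directly: conservation of energy near the collision gives $\bar\rho(t)\sim|t-t_1|^{2/(2+\alpha)}$, whence $\sigma_\eps-s_\eps\sim\eps^{(2+\alpha)/2}$ and the plateau term is $O(\eps^{(2-\alpha)/2})\to0$.
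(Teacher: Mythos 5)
Your proof is correct and follows essentially the same route as the paper: the lower bound via coercivity, weak convergence and weak lower semicontinuity, and the upper bound via the same plateau competitor, with the key point being exactly the paper's estimate $\sigma_\eps-s_\eps\sim\eps^{(2+\alpha)/2}$ from the collision asymptotics, which makes the singular plateau term $O(\eps^{(2-\alpha)/2})$. The Jacobi-length packaging is a clean equivalent (the paper instead expands $\mathcal M_h(\bar q)-\mathcal M_h(q_\eps)$ directly, which also requires bounding the kinetic energy added on the plateau — a term your additive $\mathcal L_h$ formulation absorbs automatically); just note that the constant-energy identity is $\mathcal M_h=\tfrac12\mathcal L_h^2$, not $\tfrac14\mathcal L_h^2$, a harmless slip here.
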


\begin{proof} 
\textbf{Step 1)} \emph{It results
 \begin{equation}\label{continuita di d, step 1}
 \limsup_{\eps \to 0^+} d(\eps) \leq d(0).
 \end{equation}}
We know that $t_1$ is the unique collision time of $\bar q$ in $[c,d]$; since in $\Gamma_1$ the expression of $V$ is given by Definition \ref{def: local interaction}, the classical asymptotic estimates (see e.g. \cite{BaFeTe08}) for collision trajectories of the $\alpha$-Kepler' problem applies:
\beq\label{eq300}
\begin{cases}
|\bar u(t)| \simeq C|t-t_1|^{\frac{2}{\a+2}} \\
|\dot{\bar u}(t) |\simeq C|t-t_1|^{-\frac{\a}{\a+2}}
\end{cases} \qquad \text{as $t \to t_1$}.
 \eeq
For $\eps>0$ sufficiently small, let $\zeta_{\pm}(\eps)$ be positive solutions (uniquely determined by the convexity of the function $I(t)=|\bar u(t)|^2$ in $[c,d]$) of
 \[
 |\bar u(t_1+\zeta_+(\eps))|=\eps, \qquad |\bar u(t_1-\zeta_-(\eps))|=\eps.
 \] 
By the estimates \eqref{eq300}, we infer 
\beq\label{eq301}
\zeta_\pm(\eps) \simeq C\eps^{\frac{\a+2}{2}}.
\eeq
Let $\theta_\pm \in \R$ be defined by the relations $u(t_1 \pm \zeta_{\pm}(\eps))=\eps \exp\{i \t_\pm\}$, $\t_+-\t_- = \hat \t_1$. We consider the following variation of $\bar q$:
\[
q_\eps(t):= \begin{cases} 
                    \bar q(t) & t \in [c,d] \setminus T_\eps(\bar q) \\
                    (\eps \exp\{i \t(t)\}, \bar z(t)) & t \in T_\eps(\bar q),
                   \end{cases}
\]
where $T_\eps(\bar q):=[t_1-\zeta_-(\eps),t_1+\zeta_+(\eps)]$ and $\eps \exp\{i\t(t)\}$ parametrizes an arc of the circle $\pa B_\eps(0)$, chosen in such a way that $u_\eps \in \mathcal{K}$. To fix our minds, we suppose 
\[
\t(t)= \frac{\t_+(t-t_1+\zeta_-(\eps)) - \t_-(t-t_1-\zeta_+(\eps))      }{\zeta_+(\eps)+\zeta_-(\eps)} \qquad \forall t \in T_\eps(\bar q).
\] 
Let us note that $\min_t |q_\eps(t)|=\eps$. 

We want to estimate the difference $\mathcal{M}_h(\bar q) - \mathcal{M}_h(q_\eps)$; this can be done through a direct computation:
\begin{multline}\label{eq302}
\mathcal{M}_h(\bar q) - \mathcal{M}_h(q_\eps)\\= \frac{1}{2} \left(\int_{[c,d] \setminus T_\eps(\bar q)} |\dot{\bar q}|^2 + \int_{T_\eps(\bar q)} |\dot{\bar q}|^2\right) \left( \int_{[c,d] \setminus T_\eps(\bar q)}  \left(V(\bar q)+h\right) + \int_{T_\eps(\bar q)} \left(V(\bar q)+h\right)\right) -\\- \frac{1}{2} \left(\int_{[c,d] \setminus T_\eps(\bar q)} |\dot{\bar q}|^2 + \int_{T_\eps(\bar q)} |\dot{q}_\eps|^2\right)\left( \int_{[c,d]\setminus T_\eps(\bar q)} \left(V(\bar q)+h\right) + \int_{T_\eps(\bar q)} \left(V(q_\eps)+h\right)\right) \\
= \frac{1}{2} \int_{[c,d] \setminus T_\eps(\bar q)} |\dot{\bar q}|^2 \left( \int_{T_\eps(\bar q)} V(\bar q)-V(q_\eps)  \right) +\\
+\frac{1}{2} \int_{[c,d]\setminus T_\eps(\bar q)}  \left(V(\bar q)+h\right) \left(\int_{T_\eps(\bar q)} |\dot{\bar q}|^2- |\dot{q}_\eps|^2 \right) +\\
+ \frac{1}{2} \int_{T_\eps(\bar q)}|\dot{\bar q}|^2\int_{T_\eps(\bar q)} \left(V(\bar q)+h\right)- \frac{1}{2} \int_{T_\eps(\bar q)}|\dot{q}_\eps|^2\int_{T_\eps(\bar q)} \left(V(q_\eps)+h\right) . 
\end{multline}
For every $t \in T_\eps(\bar q)$ we have
\begin{align*}
|V(\bar q(t)) & - V(q_\eps(t))|  \leq \left|\frac{m_k}{\a}\left(\frac{1}{|\bar u(t)|^\a}- \frac{1}{\eps^\a}\right)\right|+ |V_0(|\bar u(t)|, \bar z(t))-V_0(\eps,\bar z(t))| \\
& \leq C (|t-t_1|^{-\frac{2\a}{\a+2}}+\eps^{-\a})+ C| |\bar u(t)|-\eps| \\
& \leq C (|t-t_1|^{-\frac{2\a}{\a+2}}+\eps^{-\a})+ C\eps.,
\end{align*}
where we used the estimates \eqref{eq300} and the boundedness of $\nabla V_0$ in $\overline{\Gamma}_1$. Taking into account equation \eqref{eq301}, for every $\eps \geq 0$ small enough we have
\begin{equation}\label{eq303}
\begin{split}
\int_{T_\eps(\bar q)} \left|V(\bar q(t))  - V(q_\eps(t)) \right| \,dt \leq \int_{T_\eps(\bar q)} (C (|t-t_1|^{-\frac{2\a}{\a+2}}+\eps^{-\a})+ C\eps)\,dt \\
= C \left(\zeta_+(\eps)^{\frac{2-\a}{\a+2}} + \zeta_-(\eps)^{\frac{2-\a}{\a+2}}\right) + \left(\eps^{-\a}+C \eps\right)\left(\zeta_+(\eps)+\zeta_-(\eps) \right) \leq  C \eps^{\frac{2-\a}{2}}.
\end{split}
\end{equation}
Also, for $\eps \geq 0$ sufficiently small
\beq\label{eq304}
\begin{split}
&\int_{T_\eps(\bar q)} |\dot{\bar u}(t)|^2\,dt \leq C \int_{T_\eps(\bar q)} |t-t_1|^{-\frac{2\a}{\a+2}}\,dt \simeq C \eps^{\frac{2-\a}{2}}, \\
&\int_{T_\eps(\bar q)} |\dot{u}_\eps(t)|^2 \,dt = C\int_{T_\eps(\bar q)} \left(\frac{\eps \hat{\t}_1}{\zeta_+(\eps)+\zeta_-(\eps)}\right)^2\,dt \leq C \eps^{\frac{2-\a}{2}}.
\end{split}
\eeq
We can come back to equation \eqref{eq302}: collecting \eqref{eq303} and \eqref{eq304}, for every $\eps \geq 0$ sufficiently small we obtain
\begin{align*}
|\mathcal{M}_h(\bar q)-\mathcal{M}_h(q_\eps)| & \leq C \eps^{\frac{2-\a}{2}} +\frac{1}{2}\int_{T_\eps(\bar q)}|\dot{\bar q}|^2 \int_{T_\eps(\bar q)} \left|V(\bar q)  - V(q_\eps) \right|\\
& \quad + \frac{1}{2} \int_{T_\eps(\bar q)} \left(V(q_\eps)+h\right) \left( \int_{T_\eps(\bar q)}|\dot{q}_\eps|^2+|\dot{\bar q}|^2 \right) \\
& \le C \eps^{\frac{2-\a}{2}} + C\left(  \eps^{\frac{2-\a}{2}} + \eps^{\frac{2+\a}{2}}\right)\eps^{\frac{2-\a}{2}} + C\left( \eps^{-\a}+1) \right)\eps^{\frac{2+\a}{2}} \eps^{\frac{2-\a}{2}}\\
& \le C \eps^{\frac{2-\a}{2}}
\end{align*}
In particular 
\[
d(\eps) \leq \mathcal{M}_h(q_\eps) \leq \mathcal{M}_h(\bar q)+ C \eps^{\frac{2-\a}{2}} = d(0)+ C \eps^{\frac{2-\a}{2}} \qquad \forall \eps \ll 1,
\]
which implies equation \eqref{continuita di d, step 1}. \\
\textbf{Step 2)} \emph{It results 
\begin{equation}\label{cont di d, step 2}
 d(0) \le \liminf_{\eps \to 0^+} d(\eps).
\end{equation}}
Let $(\eps_n)$ be a sequence of positive real numbers such that $\eps_n \to 0$ and $d(\eps_n) \to \liminf_{\eps \to 0^+} d (\eps)$ as $n \to \infty$; by definition, there exists $(q_n) \subset \mathcal{K}$, with $q_n = (u_n,z_n)$, such that
\[
\min_{t \in [c,d]} |u_n(t)|=\eps_n \quad \text{and} \quad \mathcal{M}_h(q_n)=d(\eps_n).
\]
Since $(\mathcal{M}_h(q_n) )$ is bounded and $\mathcal{M}_h$ is coercive in $\mathcal{K}$ (see Lemma \ref{lem: proprieta' di M_h}), the sequence $(q_n)$ is bounded in $H^1$, and therefore, up to a subsequence, it is weakly convergent in $H^1$ to a function $\tilde q \in \mathcal{K}$; recalling that the weak $H^1$ convergence implies the uniform one, we deduce that $\tilde u$ has a collision. This fact, and the weak lower semi-continuity of $\mathcal{M}_h$, imply that
\[
d(0) \leq \mathcal{M}_h(\tilde q) \leq \liminf_{n \to \infty} \mathcal{M}_h(q_n) = \liminf_{\eps \to 0^+} d(\eps). \qedhere
\]
\end{proof}

Now, given $0<\eps_1<\eps_2$, let
\[
\mathcal{K}_{\eps_1,\eps_2}:=\left\lbrace q=(u,z) \in \mathcal{K}: \min_{t \in [c,d]} |u(t)| \in [\eps_1,\eps_2]\right\rbrace .
\]
Since the weak $H^1$ convergence implies the uniform one, it is easy to check that it is a weakly closed subset of $\mathcal{K}$. Let
\[
m(\eps_1,\eps_2):= \inf \left\{ \mathcal{M}_{h}(q): q\in \mathcal{K}_{\eps_1, \eps_2} \right\}.
\]
By Lemma \ref{lem: proprieta' di M_h}, we deduce that the value $m(\eps_1,\eps_2)$ is achieved by a function $q_{\eps_1,\eps_2} \in \mathcal{K}_{\eps_1,\eps_2}$. 

Now, let us consider 
\[
\mathcal{Z}_{\eps_1, \eps_2} :=\left\lbrace  q=(u,z) \in \mathcal{K}_{\eps_1,\eps_2}: \mathcal{M}_h(q)=m(\eps_1,\eps_2) \text{ and }
 \min_{t \in [c,d]} |u(t)| <\eps_2\right\rbrace .
\]

The conclusion of the proof of Theorem \ref{thm: main 1} is a consequence of the following statement. 

\begin{proposition}\label{teorema 2.7}
There exists $\bar{\eps}>0$ such that, if $ 0<\eps_1<\eps_2 \leq \bar{\eps}$, then $\mathcal{Z}_{\eps_1, \eps_2} = \emptyset$.
\end{proposition}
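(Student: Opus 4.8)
The plan is to argue by contradiction, converting the non-emptiness of $\mathcal{Z}_{\eps_1,\eps_2}$ into a geometric statement about near-collision minimizers and then contradicting the topological datum $\hat\t_1\le\hat\t+2l\pi<2\pi/(2-\a)$. Suppose the thesis fails: then there are sequences $\eps_1^n<\eps_2^n$ with $\eps_2^n\to0$ and functions $q_n=(u_n,z_n)\in\mathcal{Z}_{\eps_1^n,\eps_2^n}$, so that each $q_n$ minimizes $\mathcal{M}_h$ over $\mathcal{K}_{\eps_1^n,\eps_2^n}$ and $\eps_n^\ast:=\min_{[c,d]}|u_n|<\eps_2^n$. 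Since the upper bound on the minimal radius is not attained, $q_n$ is in fact a local minimizer of $\mathcal{M}_h$ under the single one-sided constraint $\min|u|\ge\eps_1^n$. Arguing as in Lemma \ref{smoothness of z}, I would deduce that $q_n$ solves $\bar\o_n^2\ddot q_n=\n V(q_n)$ off the contact set $\{|u_n|=\eps_1^n\}$, that its energy is constantly $h/\bar\o_n^2$, and that its angular momentum $\mathfrak{C}_n=\bar\rho_n^2\dot{\bar\t}_n$ is conserved; since the obstacle $\{|u|=\eps_1^n\}$ is a circle centred at the singularity, the contact reaction is central and exerts no torque, so $\mathfrak{C}_n$ keeps a fixed sign on all of $[c,d]$ and the angular variable $\t_n$ is monotone there, with total increment equal to the fixed rotation datum $\hat\t_1$.

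Next I would identify the weak limit. Sandwiching $d(0)\le\mathcal{M}_h(q_n)=m(\eps_1^n,\eps_2^n)\le d(\eps_2^n)$ and combining the continuity of $d$ at $0$ (Lemma \ref{lemma 2.6}) with the inequality $d(0)\le d(\eps)$ coming from Lemma \ref{lem: unique minimum} (as $\mathcal{K}_\eps\subset\mathcal{K}$), I obtain $\mathcal{M}_h(q_n)\to d(0)$. Uniqueness of the collision minimizer then forces $q_n\rightharpoonup\bar q|_{[c,d]}$ in $H^1$, hence uniformly, so that $\eps_n^\ast\to0$ and the closest-approach times $s_n\to t_1$.

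The core is a blow-up at the closest approach. Rescaling by $\rho_n=\eps_n^\ast$ in space and $\tau_n=\rho_n^{(2+\a)/2}$ in time — the only scaling under which the leading term $m/(\a|u|^\a)$ is self-similar, consistent with the asymptotics $\zeta_\pm(\eps)\simeq C\eps^{(2+\a)/2}$ — the contributions of $V_0$ and of the $z$-component are of lower order, and the energy relation shows that every $\mathcal C^1_{\loc}$ limit $w$ of the rescaled curves is a zero-energy (parabolic) solution of the pure $\a$-Kepler problem $\ddot w=-m|w|^{-\a-2}w$ with $\min|w|=1$. A key observation is that a zero-energy Kepler arc with strictly positive pericentre cannot be radial: the pericentre relation pins the angular momentum to $\sqrt{2m/\a}>0$, so the degenerate homothetic limit is automatically excluded and $w$ is a genuine parabolic orbit; a Beta-function integral then gives that $w$ sweeps the total angle $2\pi/(2-\a)$ between its two escape directions. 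Since $|u_n(c)|=|u_n(d)|=\hat\rho$ is fixed while $\eps_n^\ast\to0$, the rescaled trajectory runs from radius $\hat\rho/\eps_n^\ast\to\infty$ down to the pericentre and back, so the angle swept near $s_n$ converges to $2\pi/(2-\a)$. But $\t_n$ is monotone with total increment $\hat\t_1<2\pi/(2-\a)$, so the angle swept on any subinterval is at most $\hat\t_1$; for $n$ large this contradicts the blow-up. (If the lower obstacle is active at closest approach the rescaled limit is a parabolic arc constrained to $\{|w|\ge1\}$, containing a circular piece, hence sweeping an even larger angle, and the same contradiction applies.) This yields $\mathcal{Z}_{\eps_1,\eps_2}=\emptyset$ for all $0<\eps_1<\eps_2\le\bar\eps$.

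The step I expect to be the genuine obstacle is the blow-up convergence. One must establish compactness of the rescaled sequence \emph{uniformly over the growing radial range} $[1,\hat\rho/\eps_n^\ast]$, and not merely locally in rescaled time, rule out any loss of swept angle through concentration or escape of mass, and treat the free boundary / contact set with the obstacle $\{|u_n|=\eps_1^n\}$ carefully enough that the angle genuinely passes to the limit value $2\pi/(2-\a)$. Everything else — the Euler--Lagrange structure, the energy and angular-momentum conservation, the identification of the weak limit with $\bar q$, and the elementary integral for the parabolic angle — is routine once this convergence is in place.
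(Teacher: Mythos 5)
Your proposal follows essentially the same route as the paper: argue by contradiction, identify the weak limit of the obstacle minimizers with $\bar q$ via uniqueness and the continuity of $d$ at $0$, blow up at scale $\eps_n^{(\alpha+2)/2}$ to a zero-energy $\alpha$-Kepler arc with pericentre $1$ and angular momentum pinned by the pericentre relation, and derive the clash between the swept angle $2\pi/(2-\alpha)$ and the monotone angle bounded by $\hat\theta_1$. The uniformity issue you flag at the end is sidestepped in the paper's organization: $\mathcal{C}^1_{\loc}$ convergence suffices, because the lower bound $2\pi/(2-\alpha)$ is computed by an explicit integral on the globally defined limit orbit, while the upper bound $\hat\theta_1$ is passed to the limit on arbitrary compact intervals $[a,b]$ and only afterwards one lets $a\to-\infty$, $b\to+\infty$.
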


We postpone the proof of this proposition to the next subsection. First, we show how to obtain Theorem \ref{thm: main 1} from it. Let us suppose that Proposition \ref{teorema 2.7} holds. If $0<\eps_1<\eps_2<\eps_3 \leq \bar{\eps}$, then
\[
\text{$q_1=(u_1,z_1)$ is a minimizer of $\mathcal{M}_h$ in $\mathcal{K}_{\eps_2,\eps_3}$} \quad \Longrightarrow \quad \min_{t \in [c,d]} |u_1(t)| = \eps_3,
\]
and
\[
\text{$q_2=(u_2,z_2)$ is a minimizer of $\mathcal{M}_{h}$ in $\mathcal{K}_{\eps_1,\eps_2}$} \quad \Longrightarrow \quad \min_{t \in [c,d]} |u_2(t)| = \eps_2.
\]
Hence $d(\eps_3)<d(\eps_2)<d(\eps_1)$. As the function $d(\cdot)$ is continuous in $0$, taking $\eps_1 \to 0^+$ we obtain $d(\eps_3) < d(\eps_2) \le d(0)$: this is a contradiction, since we are assuming that the minimum of $\mathcal{M}_h$ in $\mathcal{K}$ is achieved over collision paths, and completes the proof of Theorem \ref{thm: main 1}.

\subsection{Proof of Proposition \ref{teorema 2.7}: the blow-up technique}\label{sub: core 3d}

Assume by contradiction that the statement is not true. Then there exist two sequences $(\eps_n)$, $(\bar{\eps}_n)$ converging to $0$, and a sequence $(q_n)$, such that $0<\eps_n<\bar \eps_n$, each $q_n=(u_n,z_n)$ belongs to $\mathcal{K}_{\eps_n, \bar \eps_n}$,
\[
\min_{t \in [c,d]} |u_n(t)|=\eps_n \quad \text{and} \quad \mathcal{M}_h(q_n)=m(\eps_n,\bar{\eps}_n)=d(\eps_n).
\]
From now on, we write $\mathcal K_n$ instead of $\mathcal K_{\eps_n}$ to simplify the notation. Thanks to Lemma \ref{lemma 2.6}, $\mathcal{M}_h(q_n) \to d(0)$ for $n \to \infty$; recalling that we are assuming that the minimum of $\mathcal M_h$ in $\mathcal{K}$ is achieved over collision functions, this means that $(q_n)$ is a minimizing sequence in $\mathcal{K}$. Since $\mathcal M_h$ is coercive, $(q_n)$ is bounded and, up to a subsequence, it is weakly convergent to some $\tilde q \in \mathcal{K}$; by weak lower semi-continuity, $\tilde q$ is a minimizer of $\mathcal{M}_h$ in $\mathcal{K}$, and Lemma \ref{lem: unique minimum} implies that $\tilde q=\bar q$.

The functions $q_n$ enjoy some common properties. Recall that, in cylindrical coordinates, we write $q_n=(u_n,z_n)=(\rho_n \exp{i \theta_n},z_n)$. Let us set
\[
T_{\eps_n}(q_n):=\left\{ t \in (c,d): |u_n(t)|=\eps_n\right\} \qquad T_{\partial}(q_n):=\left\{ t \in (c,d): q_n(t) \in \partial \Gamma_1\right\} 
\]

For every $n$, we define the quantity
\begin{equation}\label{def di omega_n}
\o_n^2:= \frac{\int_c^d \left(V(q_n)+h\right)}{\frac{1}{2} \int_c^d |\dot{q}_n|^2}.
\end{equation}

\begin{lemma}\label{lem: conservation energy q_n}
($i$) The sequence $(\o_n^2)$ is bounded above and below by positive constants. Hence there exist a subsequence of $(q_n)$ (still denoted $(q_n)$) and $\Omega>0$ such that $\omega_n \to \Omega$. \\
($ii$) The energy of the function $q_n$ is constant in $[c,d]$:
\[
\frac{1}{2}|\dot{q}_n(t)|^2-\frac{V(q_n(t))}{\o_n^2}=\frac{h}{\o_n^2} \qquad \text{for a.e. $t \in [c,d]$}.
\]
\end{lemma}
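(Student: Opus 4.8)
The plan is to prove the two assertions of Lemma \ref{lem: conservation energy q_n} separately, treating ($ii$) first since it is essentially a first-variation computation and then using it, together with the minimality of the $q_n$, to extract the uniform bounds in ($i$).

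For part ($ii$), I would argue exactly as in the preliminary subsection: each $q_n$ is a minimizer of $\mathcal{M}_h$ in $\mathcal{K}_{\eps_n,\bar\eps_n}$, so it is extremal with respect to inner variations, i.e. time-reparametrizations $q_n^\lambda(t):=q_n(t+\lambda\psi(t))$ with $\psi\in\mathcal{C}^\infty_c(c,d)$, which preserve the constraint $\min|u_n|=\eps_n$ away from the (interior) minimum points and keep the endpoints fixed. Imposing $\frac{d}{d\lambda}\mathcal{M}_h(q_n^\lambda)|_{\lambda=0}=0$ and using the product structure of $\mathcal{M}_h$, a routine computation (carried out in the appendix for the unconstrained case) yields that the energy $\tfrac12|\dot q_n|^2-V(q_n)/\o_n^2$ is constant a.e. and equal to $h/\o_n^2$; the normalization \eqref{def di omega_n} of $\o_n^2$ fixes the constant. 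The only care needed is that the reparametrization stays inside $\mathcal{K}_{\eps_n,\bar\eps_n}$, which holds for $|\lambda|$ small since $\psi$ has compact support and the constraint set $\min|u_n|\in[\eps_n,\bar\eps_n]$ is preserved by pushing the minimum point along the flow of $\psi$.

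For part ($i$), I would use that $(q_n)$ is a minimizing sequence, so $\mathcal{M}_h(q_n)=d(\eps_n)\to d(0)=\mathcal{M}_h(\bar q)>0$ by Lemma \ref{lemma 2.6} and Lemma \ref{lem: proprieta' di M_h}. Since all $q_n$ take values in the bounded set $\overline{\Gamma}_1$, the factor $\int_c^d(V(q_n)+h)$ is bounded below by a positive constant (as in \eqref{positività di int pot}); combined with the uniform lower bound $\mathcal{M}_h(q_n)\ge C>0$ this forces a positive lower bound on $\int_c^d|\dot q_n|^2$, hence $\o_n^2$ is bounded above. For the upper bound on the kinetic term, coercivity of $\mathcal{M}_h$ together with the boundedness of $\mathcal{M}_h(q_n)$ gives a uniform $H^1$ bound, so $\int_c^d|\dot q_n|^2\le C$; it remains to bound $\int_c^d(V(q_n)+h)$ from above. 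This is where the main obstacle lies: $V$ is singular, so one must rule out that $\int_c^d V(q_n)$ blows up even though $\min|u_n|=\eps_n>0$. I expect to control this using the energy identity from ($ii$): integrating $\tfrac12|\dot q_n|^2-V(q_n)/\o_n^2=h/\o_n^2$ over $[c,d]$ gives $\int_c^d V(q_n)=\tfrac{\o_n^2}{2}\int_c^d|\dot q_n|^2-h(d-c)$, so once $\o_n^2$ and $\int|\dot q_n|^2$ are bounded above, $\int(V(q_n)+h)$ is automatically bounded, closing the loop. Then $\o_n^2$ is bounded below away from zero because its numerator is bounded below by $C>0$ and its denominator is bounded above.

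Assembling these bounds, $(\o_n^2)$ lies in a compact subinterval of $(0,+\infty)$, and passing to a subsequence gives $\o_n\to\Omega>0$. The delicate point throughout is ensuring the constrained variations remain admissible in $\mathcal{K}_{\eps_n,\bar\eps_n}$ and that the energy relation, derived pointwise a.e. on the collision-free set where $q_n$ is smooth, extends to the whole interval $[c,d]$; here one invokes the smoothness of $q_n$ on each component of $[c,d]\setminus(T_{\eps_n}(q_n)\cup T_\partial(q_n))$ and the fact that, for these obstacle minimizers with $\eps_n>0$, there are no genuine collisions, so the energy is genuinely constant rather than merely piecewise constant.
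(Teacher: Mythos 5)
Part ($ii$) of your proposal is fine and coincides with the paper's argument: extremality of $q_n$ with respect to time re-parametrizations keeping the ends fixed, which are admissible in $\mathcal{K}_{\eps_n,\bar\eps_n}$ because they preserve the image of the curve and hence the constraint on $\min_t|u_n(t)|$.

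Part ($i$), however, contains a genuine gap, and the place where you try to "close the loop" is circular. First, the step "$\int_c^d(V(q_n)+h)\ge C>0$ together with $\mathcal{M}_h(q_n)\ge C>0$ forces a positive lower bound on $\int_c^d|\dot q_n|^2$" is not valid logic: from $\frac12\int|\dot q_n|^2\int(V(q_n)+h)\ge C$ and a \emph{lower} bound on the second factor you only get $\int|\dot q_n|^2\ge 2C/\int(V(q_n)+h)$, which degenerates if $\int(V(q_n)+h)$ is large --- and an upper bound on that factor is exactly what you have not yet established at that point. Second, your proposed remedy --- integrating the energy identity of ($ii$) to bound $\int_c^d V(q_n)$ from above --- is vacuous: integrating $\frac12|\dot q_n|^2-V(q_n)/\o_n^2=h/\o_n^2$ over $[c,d]$ returns exactly $\int_c^d(V(q_n)+h)=\frac{\o_n^2}{2}\int_c^d|\dot q_n|^2$, which is the definition \eqref{def di omega_n} of $\o_n^2$ and carries no new information; to extract an upper bound on $\int(V(q_n)+h)$ from it you would need $\o_n^2$ bounded above, which is the very statement being proved. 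The missing ingredient is a \emph{geometric} lower bound on the kinetic term: for $n$ large enough that $\eps_n<\hat\rho/2$, the curve $q_n$ must travel from $\bar p_1$ (where $|u_n(c)|=\hat\rho$) to the obstacle $\{|u|=\eps_n\}$, hence covers a length bounded below independently of $n$, and Cauchy--Schwarz gives $\int_c^d|\dot q_n|^2\ge C_3>0$. Once this is in hand everything closes without circularity; the cleanest route (the paper's) is to rewrite
\[
\o_n^2=\frac{\mathcal{M}_h(q_n)}{\frac14\left(\int_c^d|\dot q_n|^2\right)^2}=\frac{d(\eps_n)}{\frac14\,\|\dot q_n\|_{L^2([c,d])}^4},
\]
so that the potentially singular factor $\int_c^d(V(q_n)+h)$ never needs to be estimated separately: $d(\eps_n)\to d(0)>0$ bounds the numerator from both sides, while the geometric bound and the coercivity of $\mathcal{M}_h$ bound the denominator from both sides, yielding the two-sided bound on $\o_n^2$ at once.
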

\begin{proof}
($i$) We have
\beq\label{20}
\o_n^2=\frac{\mathcal{M}_h(q_n)}{\frac{1}{4} \left(\int_c^d |\dot{q}_n|^2\right)^2}=\frac{d(\eps_n)}{\frac{1}{4}\|\dot{q}_n\|_{L^2([c,d])}^4}.
\eeq
We know that
\[
0<d(0)<d(\eps_n) \quad \text{and} \quad d(\eps_n) \to d(0) \quad \Longleftrightarrow \quad \exists C_1,C_2 > 0: \ C_1 \leq d(\eps_n) \leq C_2 \ \forall n,
\]
so to prove \eqref{20} it is sufficient to show that $\|\dot{q}_n\|_{L^2([c,d])}$ is bounded from below and from above by positive constants. For $n$ so large that $\eps_n<|p_1-c_k|/2$ any function $q_n$ has to travel at least for the common distance $|p_1-c_k|/2$ in order to pass from $p_1$ to the obstacle $\{|u|=\eps_n\}$; then, arguing by contradiction, it is not difficult to check that there exists $C_3>0$ such that $\|\dot{q}_n\|_2 \ge C_3>0$. Moreover, being $(q_n)$ a minimizing sequence of a coercive functional, it is bounded in the $H^1$ norm. \\
($ii$) It is a consequence of the extremality of $q_n$ with respect to time re-parametrizations keeping the ends fixed. These variations are admissible in $\mathcal{K}_n$.
\end{proof}

\begin{lemma}\label{minimum C^1}
($i$) It results $T_{\partial}(q_n)= \emptyset$ for every $n$;\\
($ii$) For every $n$, the function $q_n$ is of class $\mathcal{C}^1((c,d))$.
\end{lemma}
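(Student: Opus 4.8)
The plan is to establish the two assertions separately, first confining $q_n$ to the interior of $\Gamma_1$ so that the only active constraint is the inner cylinder $\{|u|=\eps_n\}$, and then proving $\mathcal C^1$ regularity by viewing $q_n$ as a length-minimizing geodesic for the Jacobi metric constrained to the exterior of a convex obstacle.

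For part (i), recall from the construction just before this lemma that $q_n \rightharpoonup \bar q$ weakly in $H^1$, hence uniformly, and that $\bar q$ is the unique minimizer of $\mathcal M_h$ in $\mathcal K$ (Lemma \ref{lem: unique minimum}). I would split $\partial \Gamma_1$ into the two caps $\{z = \sup_{[c,d]}\bar z + \delta\}$, $\{z = \inf_{[c,d]}\bar z - \delta\}$ and the lateral part $\{|u| = \hat\rho\}$. On the caps the argument is immediate: since $\bar z([c,d])$ stays at distance $\delta$ from them and $z_n \to \bar z$ uniformly, the $z_n$ cannot reach the caps for $n$ large. For the lateral part, one uses that $I = |\bar u|^2$ is strictly convex on $[c,d]$ (Corollary \ref{convexity of I} and the choice of $c,d$) with $I(c)=I(d)=\hat\rho^2$, so $|\bar u(t)| < \hat\rho$ on $(c,d)$; uniform convergence then keeps $|u_n| < \hat\rho$ on every compact subinterval, and the only point requiring real work is the behaviour near the endpoints. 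There I would argue by contradiction: if $|u_n|$ returned to the value $\hat\rho$ at an interior time, I would excise the outward excursion by a local comparison that replaces it with a curve lying in $\overline\Gamma_1$, in the same homotopy class, and with no larger action, contradicting minimality. Since only the regime $n \to \infty$ is used in the subsequent blow-up, it is in fact enough to have the conclusion for $n$ large.

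For part (ii), by (i) the functions $q_n$ take values in the open set $\Gamma_1$, so near any interior point the admissible competitors are exactly the $H^1$ curves that stay in $\{|u| \geq \eps_n\}$, carry the same winding number, and satisfy $\min_{[c,d]}|u| \in [\eps_n,\bar\eps_n]$. On the open non-contact set $\{|u_n| > \eps_n\}$, extremality with respect to compactly supported variations yields the Euler--Lagrange equation $\omega_n^2 \ddot q_n = \nabla V(q_n)$ (with $\omega_n$ as in Lemma \ref{lem: conservation energy q_n}), so $q_n$ is smooth there; on the interior of the contact set $\{|u_n|=\eps_n\}$ one has $\rho_n \equiv \eps_n$ and the motion reduces to a smooth constrained flow in the variables $(\theta_n,z_n)$. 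Thus $\mathcal C^1$ regularity is only in question at the free-boundary points $t^\ast \in \partial\{|u_n|=\eps_n\}\cap(c,d)$.

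The main obstacle is precisely this matching at the free boundary, i.e. ruling out a corner of $q_n$ at $t^\ast$. The clean way is to work, as in Lemma \ref{lem: unique minimum}, with the additive Jacobi length $\mathcal L_h$: since $q_n$ minimizes $\mathcal M_h$ it minimizes $\mathcal L_h$, so it is a length-minimizing geodesic for the metric $\sqrt{2(V+h)}\,|dq|$ --- conformal to the Euclidean one, with smooth positive density on $\overline\Gamma_1 \cap \{|u|\geq\eps_n\}$ --- constrained to avoid the open solid cylinder $\{|u|<\eps_n\}$, which is convex. For such an obstacle a length-minimizer must meet the boundary tangentially and is $\mathcal C^1$: concretely, at $t^\ast$ the one-sided derivatives of $\rho_n$ satisfy $\dot\rho_n(t^{\ast-})\le 0$ and $\dot\rho_n(t^{\ast+})\ge 0$ (as $t^\ast$ is a minimum of $\rho_n$), and a strictly negative value would produce a genuine corner which could be strictly shortened by a tangential detour remaining in $\{|u|\geq\eps_n\}$ and in the class $\mathcal K_{\eps_n,\bar\eps_n}$, contradicting minimality. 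Hence $\dot\rho_n(t^{\ast\pm})=0$, the tangential velocity is continuous (there is no tangential impulse), and $q_n \in \mathcal C^1((c,d))$. The two points that need care in carrying this out are checking that the competing tangential detour stays admissible (same winding number, values in $\overline\Gamma_1$, and $\min|u|$ still in $[\eps_n,\bar\eps_n]$) and justifying that a cornered path is strictly longer than its tangential smoothing in the Jacobi metric.
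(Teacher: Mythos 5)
Your proposal reaches the right conclusions but, for part (ii), takes a genuinely different route from the paper. The paper does not argue at the free boundary by hand: it reparametrizes, showing that $x_n(t):=q_n(\omega_n t)$ minimizes the action $J(x)=\int\bigl(\tfrac12|\dot x|^2+V(x)+h\bigr)$ in the corresponding class (via the AM--GM inequality between $J$ and $2\sqrt{\mathcal M_h}$, with equality for $x_n$ by conservation of energy), and then invokes Theorem 1.6 of Marino--Scolozzi on geodesics with obstacles: a one-sided bound on the difference quotient of the Dirichlet energy, verified here using minimality of $x_n$ and the Lipschitz bound on $V$ over $\overline\Gamma_1\setminus\{|u|<\eps_n\}$, yields $x_n\in H^2$, hence $\mathcal C^1$, with no case analysis on contact versus non-contact sets. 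Your taut-string/corner-cutting argument in the Jacobi metric is the classical idea behind that theorem, and it can be made to work, but the two steps you flag as ``needing care'' are precisely its entire content: (a) you must construct an admissible competitor (same winding number, confined to $\overline\Gamma_1$, with $\min|u|$ still in $[\eps_n,\bar\eps_n]$) that is strictly shorter -- note that the naive chord cutting the corner enters the convex obstacle, so you need the genuine taut-string competitor, in a conformal rather than Euclidean metric; and (b) your argument presupposes that the one-sided derivatives $\dot\rho_n(t^{*\pm})$ exist at every point of $\partial\{|u_n|=\eps_n\}$, which is clear when $t^*$ is approached through a non-contact interval but not a priori at an accumulation point of contact and non-contact components (the paper only proves the contact set is an interval \emph{after} this lemma, using the $\mathcal C^1$ regularity). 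So as written your part (ii) is a program rather than a proof; either carry out the taut-string construction in detail or, more economically, quote the obstacle-regularity theorem as the paper does.

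For part (i) your argument is essentially the paper's (uniform convergence of $q_n$ to $\bar q$ plus $T_\partial(\bar q)=\emptyset$, the latter from strict convexity of $|\bar u|^2$ with $|\bar u(c)|=|\bar u(d)|=\hat\rho$), and you are right that this only controls compact subsets of $(c,d)$ and only for $n$ large; the paper is silent on the endpoint region, so your proposed excision/comparison there (or the observation that only the regime $n\to\infty$ is used afterwards) is a reasonable patch rather than a deviation.
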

\begin{proof}
($i$) We already know that $q_n \wc \bar q$ in the weak topology of $H^1$, and hence uniformly in $[c,d]$. As a consequence, the desired result follows simply by the fact that, having chosen $c,d$ so that the function $|\bar u|^2$ is strictly convex in $(c,d)$, it results $T_\pa(\bar q)= \emptyset$. \\
($ii$) By point ($i$), if $q_n$ does not hit the boundary $\pa \Gamma_1$. Now, let 
\[
\mathcal{K}_n'= \left\{ x \in H^1\left(\left[\frac{c}{\o_n},\frac{d}{\o_n} \right]\right): x(s)=q(\o_n s) \text{ for some } q \in \mathcal{K}_n\right\}.
\] 
There is a bijective correspondence between $\mathcal K_n$ and $\mathcal K_n'$, given by 
\[
q(t) \in \mathcal K_n \longleftrightarrow x(s)=q(\o_n s) \in \mathcal K_n'.
\]
We claim that, since $q_n$ is a minimizer of $\mathcal{M}_h$ in $\mathcal K_n$, then $x_n(t) := q_n(\o_n t)$ minimizes the (action) functional
\[
J(x):= \int_{c/\o_n}^{d/\o_n} \left( \frac{1}{2}|\dot{x}|^2+V(x)+h\right)
\]
in the set $\mathcal K_n'$. Indeed, for any $x \in \mathcal K_n'$ corresponding to some $q \in \mathcal{K}_n$, we have 
\begin{align*}
J(x) & \geq 2\left(\int_{c/\o_n}^{d/\o_n}  \frac{1}{2} |\dot{x}(s)|^2\,ds\right)^\frac{1}{2}\left(\int_{c/\o_n}^{d/\o_n} \left(V(x(s))+h\right)\,ds\right)^\frac{1}{2} \\
& = 2 \left(\int_c^d \frac{1}{2} |\dot{q}(t)|^2\,dt \int_0^1 \left(V(q(t))+h\right)\,dt\right)^\frac{1}{2} = 2 \sqrt{\mathcal{M}_h(q)} \geq 2 \sqrt{\mathcal{M}_h(q_n)},
\end{align*}
where the last inequality follows by the minimality of $q_n$ in $\mathcal{K}_n$. As a consequence of the conservation of the energy for $q_n$, Lemma \ref{lem: conservation energy q_n}, we have also
\[
 J(x_n) = \sqrt{\mathcal{M}_h(q_n)},
\]
which proves the claim. 

Now, $q_n \in \mathcal{C}^1((c,d))$ if and only if $x_n \in \mathcal{C}^1\left((c/\omega_n,b/\o_n)\right)$. According to Theorem 1.6 of \cite{MaSc}, if we prove that
\[
\max \left\{0, \limsup_{\substack{ \| x_n-x \|_{2} \to 0 \\ x \in \mathcal K_n'}} \ddfrac{\frac{1}{2}\int_{a/\o_n}^{b/\o_n}|\dot{x}_n|^2-\frac{1}{2}\int_{a/\o_n}^{b/\o_n}|\dot{x}|^2  }{\| x_n-x\|_2} \right\} <+\infty,
\]
then $x_n \in H^2\left(c/\o_n,d/\o_n\right)$ and the proof is complete. For any $x \in \mathcal{K}_n'$, we have
\[
\ddfrac{\frac{1}{2}\int_{c/\o_n}^{d/\o_n}|\dot{x}_n|^2-\frac{1}{2}\int_{c/\o_n}^{d/\o_n}|\dot{x}|^2  }{\|x_n-x\|_{2}}= \frac{J(x_n) - J(x) }{\|x_n-x\|_{2} } + \ddfrac{\int_{c/\o_n}^{d/\o_n} V(x)- V(x_n)  }{\| x_n-x\|_{2}} .
\]
The first term on the right hand side is smaller than $0$ because of the minimality of $x_n$; as far as the second term is concerned, we use the fact that the image of any $x \in \mathcal{K}'_n$ is confined in $\overline{\Gamma}_1 \setminus \left\{|u|<\eps_n\right\}$, and here the potential $V$ is smooth, bounded, and has bounded gradient. Therefore, for any $x \in \mathcal K_n'$
\[
\int_{c/\o_n}^{d/\o_n} V(x)- V(x_n)  \leq C \int_{c/\o_n}^{d/\o_n} | x_n - x| \leq \frac{C}{\sqrt{\omega_n}}\sqrt{d-c}\|x_n - x\|_{2} \to 0,
\]
as $n \to \infty$, where we used point ($i$), and the desired result follows.
\end{proof}

\begin{lemma}\label{proprietà di u_n}
For every $n \in \mathbb{N}$, the function $q_n$ has the following properties:
\begin{itemize}
\item[($i$)] If $(c',d')$ is a connected component of $[c,d] \setminus T_{\eps_n}(q_n)$, then $q_n \in \mathcal{C}^2((c',d'))$, and solves
\beq\label{omega_n}
\o_n^2 \ddot{q}_n(t)=\n V(q_n(t)),
\eeq
Furthermore, the $z$ component $z_n$ is of class $\mathcal{C}^2$ in the whole $(c,d)$, and is a solution of
\[
 \o_n^2 \ddot{z}_n(t)=\partial_z V_0(u_n(t),z_n(t));
\]
\item[($ii$)] For every $n$, there exist $t_n^- \leq t_n^+$ such that:
\begin{align*}
&|u_n(t)| > \eps_n \qquad t \in [c,t_n^-) \cup (t_n^+,d]\\
&|u_n(t)|=\eps_n \qquad t \in [t_n^-,t_n^+],
\end{align*}
that is, $T_{\eps_n}(u_n)=[t_n^-,t_n^+]$;
\item[($iii$)] The function $\theta_n|_{(t_n^-,t_n^+)}$ is of class $\mathcal{C}^2$, is strictly monotone, and is a solution of
\begin{equation}\label{equazione per phi_n}
\dot{\theta}_n(t) \ddot{\theta}_n(t) = \frac{1}{\eps_n^2  \o_n^2} \pa_z V\left(\eps_n,z_n(t) \right) \dot{z}_n(t);
\end{equation}
\item[($iv$)] The angular momentum $\mathfrak{C}_n:= \rho_n^2 \dot \t_n$ is constant in $[c,d]$, and $\mathfrak{C}_n \neq 0$;
\item[($v$)] The sequence $(\dot{z}_n)$ is bounded in $L^\infty([c,d])$.
\end{itemize}
\end{lemma}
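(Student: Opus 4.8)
The plan is to read off all five properties from the Euler--Lagrange analysis of the constrained minimizer $q_n$, exploiting that the obstacle constraint $\min_{[c,d]}|u|=\eps_n$ involves only the radial variable. I would first record which variations are admissible in $\mathcal K_{\eps_n}$. On a connected component $(c',d')$ of $[c,d]\setminus T_{\eps_n}(q_n)$ one has $|u_n|>\eps_n$, so compactly supported perturbations of $q_n$ keep $\min|u|=\eps_n$ unchanged for $|\lambda|$ small; extremality then yields the free equation $\o_n^2\ddot q_n=\nabla V(q_n)$ exactly as in Lemma \ref{smoothness of z}, with $\mathcal C^2$ regularity by bootstrap from the smoothness of $V$ off the singular set. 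The $z$-component is special: a variation $(0,\zeta)$ with $\zeta\in\mathcal C_c^\infty(c,d)$ never changes $|u_n|$, so it is admissible on \emph{all} of $(c,d)$, including the contact set; hence $z_n\in\mathcal C^2(c,d)$ solves $\o_n^2\ddot z_n=\partial_z V_0(u_n,z_n)$ throughout, which is (i). Likewise an angular variation $\theta_n\mapsto\theta_n+\lambda\varphi$ leaves $|u_n|$, the endpoints and the homotopy class unchanged, hence is admissible everywhere; since $V$ is $\theta$-independent in $\Gamma_1$, the resulting identity is $\frac{d}{dt}(\rho_n^2\dot\theta_n)=0$, i.e. $\mathfrak C_n=\rho_n^2\dot\theta_n$ is constant on $[c,d]$. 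For $\mathfrak C_n\neq0$ I would invoke the topological constraint: each $q_n$ is collision-free with fixed endpoints in $K_l$, so $\int_c^d\dot\theta_n\,dt=\hat\theta_1>0$; were $\mathfrak C_n=0$ then $\dot\theta_n\equiv0$ (as $\rho_n\ge\eps_n>0$), forcing $\theta_n$ constant and contradicting $\hat\theta_1\neq0$. This gives (iv).

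For (v) I would combine (i) with the uniform $H^1$ bound on the minimizing sequence. The equation $\o_n^2\ddot z_n=\partial_z V_0$, the lower bound $\o_n\to\Omega>0$ from Lemma \ref{lem: conservation energy q_n}, and the boundedness of $\partial_z V_0$ on $\overline\Gamma_1$ give $\|\ddot z_n\|_\infty\le C$ uniformly in $n$. Since $\|\dot z_n\|_{L^2}\le\|\dot q_n\|_{L^2}\le C$, there is a point where $|\dot z_n|\le C$, and integrating the bounded $\ddot z_n$ over the bounded interval $[c,d]$ yields a uniform $L^\infty$ bound.

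The main obstacle is (ii), showing the contact set is a single interval; I would run a Lagrange--Jacobi convexity argument localized near the collision of $\bar q$. On any free subinterval, differentiating $I_n:=|u_n|^2$ twice and substituting the free equation and the conserved energy gives the modified Lagrange--Jacobi identity of Lemma \ref{collisions isolated}; every term there is uniformly bounded except the singular term $\tfrac{2(2-\alpha)}{\alpha\o_n^2}\,m|u_n|^{-\alpha}$ --- here I crucially use (v) to control $|\dot z_n|^2$ and Lemma \ref{lem: conservation energy q_n} to control $\o_n$. Hence there is $\rho_*>0$, independent of $n$, with $\ddot I_n>0$ wherever $q_n$ is free and $|u_n|<\rho_*$. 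Because $q_n\to\bar q$ uniformly and $|\bar u|^2$ is strictly convex on $(c,d)$ with its only zero at $t_1$, for large $n$ the whole contact set lies in a small interval about $t_1$ on which $|u_n|<\rho_*$. If the contact set were disconnected, a free excursion $(s_1,s_2)$ contained in this interval would satisfy $I_n(s_1)=I_n(s_2)=\eps_n^2$ and $I_n>\eps_n^2$ inside, contradicting the strict convexity of $I_n$ on $(s_1,s_2)$ (which forces $I_n$ below its constant chord). Therefore $T_{\eps_n}(u_n)=[t_n^-,t_n^+]$, and the sign of $|u_n|-\eps_n$ off the contact interval follows from the monotonicity of $I_n$ there.

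Finally, for (iii), on $(t_n^-,t_n^+)$ one has $\rho_n\equiv\eps_n$, so $u_n=\eps_n\exp\{i\theta_n\}$ and $\theta_n$ inherits the $\mathcal C^2$ regularity established in Lemma \ref{minimum C^1}. Differentiating the conserved energy of Lemma \ref{lem: conservation energy q_n} along the contact arc, where $\dot\rho_n\equiv0$, produces a second-order relation for $\theta_n$ of the stated form, while strict monotonicity is immediate from (iv): on the contact interval $\dot\theta_n=\mathfrak C_n/\eps_n^2$ keeps the constant sign of $\mathfrak C_n\neq0$. I expect (ii) to be the only genuinely delicate point, the remaining items being routine once the admissible variations and the two conservation laws are in place.
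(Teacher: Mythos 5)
Your proposal is correct and follows essentially the same route as the paper: Euler--Lagrange variations on the free set and $z$/angular variations on all of $(c,d)$ for (i) and (iv), the Lagrange--Jacobi convexity of $I_n=|u_n|^2$ near the collision for (ii), the energy integral on the contact arc for (iii), and the $\|\ddot z_n\|_\infty$ bound plus the $L^2$ bound on $\dot z_n$ for (v); your reordering (proving (v) before (ii)) even tidies the paper's implicit forward use of the $\dot z_n$ bound in the convexity identity. The only genuine divergence is the step $\mathfrak{C}_n\neq 0$: you rule out $\mathfrak C_n=0$ by the topological pinning $\theta_n(d)-\theta_n(c)=\hat\theta_1>0$, whereas the paper observes that $\mathfrak C_n=0$ would force a purely radial reflection against the obstacle, contradicting the $\mathcal C^1$ regularity of Lemma \ref{minimum C^1} --- both work, but the paper's version does not rely on reading the winding number off the (weakly closed, collision-containing) class $K_l$, so it sidesteps the borderline case $\hat\theta_1=2\pi$ with $\bar p_1=\bar p_2$ more cleanly.
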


\begin{proof}
The proof of ($i$) is analogue to the proof of Lemma \ref{smoothness of z}. \\
($ii$) On every interval $(c',d') \subset (c,d) \setminus T_{\eps_n}(q_n)$, the function $q_n$ solves equation \eqref{omega_n}; using the uniform convergence of $(q_n)$ to $\bar q$ in $[c',d']$, a direct computation of the second derivative of $|u_n|^2|$ (see the proof of Lemma \ref{collisions isolated}) shows that the function it is a strictly convex function in $(c',d')$. Therefore, since $|u_n| \ge \eps_n$ by definition, if there exist $t_1<t_2$ such that $|u_n(t_1)| = |u_n(t_2)|=\eps_n$, then $|u_n(t)|=\eps_n$ for every $t \in (t_1,t_2)$.\\
($iii$) For $t \in (t_n^-,t_n^+)$, the energy integral reads
\begin{equation}\label{energia in polari}
\eps_n^2 \dot{\t}_n^2(t)=\frac{2}{\o_n^2}\left(h+ V\left(u_n(t),z_n(t)\right) \right).
\end{equation}
As a consequence, $\t_n \in \mathcal{C}^2((t_n^-,t_n^+))$. Since $q_n([c,d]) \subset \overline{\Gamma}_1$, and $\overline{\Gamma}_1 \subset \{V>-h\}$ (see Definition \ref{def: local interaction}), equation \eqref{energia in polari} implies that $\dot{\t}_n(t) \neq 0$ for every $t \in (t_n^-,t_n^+)$. To obtain equation \eqref{equazione per phi_n}, it is sufficient to differentiate \eqref{energia in polari} with respect to $t$, and recall that by assumption $ V\left(u_n(t),z_n(t)\right)=V\left(\eps_n,z_n(t)\right)$.\\
($iv$) The conservation of the angular momentum follows by the extremality of $q_n$ with respect to variations of the angle $\t_n$ keeping the radial and the $z$ components fixed: for any $\varphi \in \mathcal{C}^\infty_c([c,d])$, let $q_n^\lambda=(\rho_n \exp\{i (\t_n+\lambda\varphi)\},z_n)$; this is a family of functions in $\mathcal{K}_n$, so that
\[
\left. \frac{d}{d\lambda}\mathcal{M}_h(q_n^\lambda) \right|_{\lambda=0} = 0;
\]
Recalling that $V$ does not depend on $\t$, a direct computation shows that, since this relation holds for every $\varphi \in \mathcal{C}^\infty_c([c,d])$, then $\mathfrak{C}_n$ is constant. Now, assume that $\mathfrak{C}_n=0$; then, since $\rho_n \ge \eps_n$, it must be $\dot{\t}(t)=0$ in $(c,d)$, and in particular $q_n$ has a radial reflection against the obstacle $\{|u|=\eps_n\}$; clearly, in such a situation $q_n$ cannot be of class $\mathcal{C}^1$, in contradiction with Lemma \ref{minimum C^1}. \\
($v$) We know that $z_n$ is a classical solution of $\ddot{z}_n = \pa_z V_0(u_n,z_n)$ in $(c,d)$. The boundedness of $\pa_z V_0$ in $\overline{\Gamma}_1$ implies that there exists $C>0$ such that $\|\ddot{z}_n\|_\infty \le C$. To conclude, it is then sufficient to observe that the sequence $(\dot{z}_n(c))$ is bounded, too. Indeed,
\[
|\dot{z}_n(c)| \le \frac{1}{\sqrt{d-c}} \int_c^d |\dot{z}_n(t)|^2\,dt + \|\pa_z V_0\|_\infty  \le \frac{1}{\sqrt{d-c}} \int_c^d |\dot{q}_n(t)|^2\,dt + C \le C,
\]
where the last inequality is a consequence of the fact that, being $(q_n)$ a minimizing sequence of a coercive functional, it is bounded in $H^1$.
\end{proof}

\begin{remark}\label{rem: su (q_n)}
The monotonicity of $\t_n$, proved in point ($iii$) of the previous lemma, implies that the total variation of the angle $\t_n$ is equal to $\t_n(d)-\t_n(c)=\hat \t_1 \in (0,2\pi/(2-\alpha))$, for any $n$. 
Also, recall that the energy of $q_n$ is constant, and equal to $h/\o_n^2$. Point ($i$) of Lemma \ref{minimum C^1} implies that the sequence of the energies is uniformly bounded from below and from above.
\end{remark}

In the following statement, we give crucial estimates on the angular momentum of $q_n$, and on the amplitude of the time interval in which the function $q_n$ stays on the obstacle $\{|u|=\eps_n\}$.

\begin{lemma}\label{prop 2.12}
The estimates
\[
\mathfrak{C}_n= \eps_n^{\frac{2-\a}{2}} \sqrt{\frac{2 m}{\o_n^2 \a}}\left(1+ O\left(\eps_n^\a\right)\right) \quad \text{and} \quad t_n^+ - t_n^-=O\left(\eps_n^{\frac{\a+2}{2}}\right)
\]
hold for $n \to \infty$.
\end{lemma}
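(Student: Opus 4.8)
The plan is to localise the whole argument on the ``obstacle'' sub-interval $[t_n^-,t_n^+]$ produced by Lemma \ref{proprietà di u_n}(ii), where $\r_n\equiv \eps_n$ and hence $\dot\r_n\equiv 0$. On this interval the radial motion is frozen, so the entire kinetic budget is carried by the angular and the $z$ components, and the natural starting point is the conservation of the energy for $q_n$ established in Lemma \ref{lem: conservation energy q_n}(ii). Since $\r_n=\eps_n$ there, Definition \ref{def: local interaction} forces
\[
V(q_n(t))=\frac{m}{\a\,\eps_n^\a}+V_0(\eps_n,z_n(t)),
\]
so that the singular term $m/(\a\eps_n^\a)$ diverges as $n\to\infty$, while $V_0$, the fixed constant $h$, the factor $\o_n\to\Omega>0$ (Lemma \ref{lem: conservation energy q_n}(i)), and the speed $\dot z_n$ (Lemma \ref{proprietà di u_n}(v)) all stay uniformly bounded.

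First I would extract the leading asymptotics of $\dot\t_n$. Inserting the above expression of $V$ into the energy relation of Lemma \ref{lem: conservation energy q_n}(ii) and using $\dot\r_n\equiv0$ to drop the radial kinetic term, I obtain
\[
\eps_n^2\dot\t_n^2(t)=\frac{2}{\o_n^2}\left(\frac{m}{\a\eps_n^\a}+V_0(\eps_n,z_n(t))+h\right)-\dot z_n^2(t)=\frac{2m}{\o_n^2\a}\,\eps_n^{-\a}\bigl(1+O(\eps_n^\a)\bigr),
\]
where the remainder collects the bounded terms $V_0+h$ and $\dot z_n^2$ divided by the diverging one. Taking square roots (and using $\sqrt{1+x}=1+O(x)$) gives $|\dot\t_n(t)|=\sqrt{2m/(\o_n^2\a)}\,\eps_n^{-(\a+2)/2}(1+O(\eps_n^\a))$. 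Now, by Lemma \ref{proprietà di u_n}(iv) the angular momentum $\mathfrak C_n=\r_n^2\dot\t_n$ is a nonzero constant on all of $[c,d]$, so I may evaluate it on the obstacle, where $\mathfrak C_n=\eps_n^2\dot\t_n$; moreover $\mathfrak C_n>0$ because $\t_n$ is monotone with positive total increment $\hat\t_1$ (Remark \ref{rem: su (q_n)}), whence $\dot\t_n>0$. Multiplying the previous display by $\eps_n^2$ and using $2-\tfrac{\a+2}{2}=\tfrac{2-\a}{2}$ yields exactly the first claimed estimate.

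For the second estimate I would argue through the angle swept while $q_n$ rests on the obstacle. There $\dot\t_n=\mathfrak C_n/\eps_n^2$ is constant, so
\[
\t_n(t_n^+)-\t_n(t_n^-)=\frac{\mathfrak C_n}{\eps_n^2}\,(t_n^+-t_n^-).
\]
Because $\dot\t_n=\mathfrak C_n/\r_n^2$ keeps a constant sign on the whole $[c,d]$ (as $\r_n\ge\eps_n>0$ and $\mathfrak C_n$ is a nonzero constant), $\t_n$ is monotone there, hence the left-hand side is bounded by the total variation $\hat\t_1<2\pi/(2-\a)$ of Remark \ref{rem: su (q_n)}. Solving for $t_n^+-t_n^-$ and substituting the first estimate of $\mathfrak C_n$ gives
\[
t_n^+-t_n^-\le\frac{\eps_n^2}{\mathfrak C_n}\,\hat\t_1=\hat\t_1\sqrt{\frac{\o_n^2\a}{2m}}\;\eps_n^{\,2-\frac{2-\a}{2}}\bigl(1+O(\eps_n^\a)\bigr)=O\!\left(\eps_n^{\frac{\a+2}{2}}\right),
\]
which is the desired bound.

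The algebra above is routine; the one point that genuinely requires care is that the error $O(\eps_n^\a)$ be uniform both in $t$ and in $n$. This is where I would be explicit that $V_0$ is bounded on the compact set $\overline\Gamma_1$, that $h$ is fixed, that $\o_n$ is bounded away from $0$ and $\infty$ by Lemma \ref{lem: conservation energy q_n}(i), and that $\|\dot z_n\|_\infty$ is bounded by Lemma \ref{proprietà di u_n}(v). Once these uniform bounds are in place, the divergence of $m/(\a\eps_n^\a)$ dominates every other contribution, and both estimates follow; I do not expect any serious obstacle beyond this bookkeeping.
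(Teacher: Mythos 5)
Your proof is correct and follows essentially the same route as the paper: on the obstacle interval the energy integral with $\dot\rho_n\equiv 0$ gives $\eps_n|\dot\theta_n|=\eps_n^{\frac{2-\a}{2}}\sqrt{2m/(\o_n^2\a)}(1+O(\eps_n^\a))$, which yields $\mathfrak{C}_n$, and then the uniform bound on the total angular variation forces $t_n^+-t_n^-=O(\eps_n^{\frac{\a+2}{2}})$. The uniformity bookkeeping you flag (boundedness of $V_0$, $h$, $\o_n$, $\|\dot z_n\|_\infty$) is exactly what the paper invokes.
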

\begin{proof}
For every $t \in [t_n^-,t_n^+]$, it holds 
\beq\label{angular mom on obstacle}
\eps_n |\dot{u}_n(t)|  =\eps_n^2\dot{\t}_n(t).
\eeq
On the other hand, from the expression of the energy and the boundedness of $V_0$ and $(\dot{z}_n)$ (point ($v$) of Lemma \ref{proprietà di u_n}), we obtain
\beq \label{*25}
\begin{split}
\eps_n |\dot{u}_n(t)| &= \eps_n \sqrt{ \frac{2}{\o_n^2} \left(\frac{m}{\a\eps_n^\a}+ V_0\left(\eps_n,z_n(t)\right)+h\right) - \dot{z}_n^2(t)} \\
&= \eps_n^{\frac{2-\a}{2}} \sqrt{\frac{2m}{\o_n^2 \a}+ \frac{2\eps_n^\a}{\o_n^2} \left(V_0(\eps_n,z_n(t))+h- \frac{\o_n^2}{2} \dot{z}_n^2(t)\right)} = \eps_n^{\frac{2-\a}{2}} \sqrt{\frac{2 m}{\o_n^2 \a}}\left(1+O(\eps_n^\a)\right).
\end{split}
\eeq
Plugging into equation \eqref{angular mom on obstacle}, we deduce
\[
\dot{\t}_n(t) = \eps_n^{-\frac{2+\a}{2}} \sqrt{\frac{2 m}{\o_n^2 \a}}\left(1+O(\eps_n^\a)\right),
\]
so that the total variation of $\t_n$ on the obstacle is
\[
\t_n(t_n^+)-\t_n(t_n^-) = \eps_n^{-\frac{2+\a}{2}} \sqrt{\frac{2 m}{\o_n^2 \a}}\left(1+O(\eps_n^\a)\right)(t_n^+-t_n^-).
\]
As observed in Remark \ref{rem: su (q_n)}, this variation is bounded, so that $t_n^+ - t_n^-=O\left(\eps_n^{\frac{\a+2}{2}}\right)$.
\end{proof}

We consider a blow-up of the sequence $(q_n)$. For every $n$, let us fix $t_n \in [t_n^-,t_n^+]$. Recalling that $q_n \to \bar q$ uniformly in $[c,d]$, we deduce that the sequence $(t_n)$ tends to $t_1$, which is the unique collision time of $\bar q$ in $(c,d)$. Let us set
\[
c_n:=\eps_n^{-\frac{\a+2}{2} }(c-t_n), \qquad d_n:=\eps_n^{-\frac{\a+2}{2}} (d-t_n).
\]
We also define
\[
s_n^-:= \eps_n^{-\frac{\a+2}{2} }(t_n^- - t_n), \qquad s_n^+:= \eps_n^{-\frac{\a+2}{2} }(t_n^+ - t_n)
\]
We note that $c_n \to -\infty$, $d_n \to +\infty$ as $n \to \infty$, while $(s_n^-)$ and $(s_n^+)$ are two bounded sequences, thanks to Lemma \ref{prop 2.12}. Hence, up to a subsequence they converge to some limits $s^-$ and $s^+$, respectively.


For every $n$, we define $w_n: [c_n,d_n] \to \R^3$ as
\[
w_n(s):=\left( \frac{1}{\eps_n} u_n\left( t_n +\eps_n^{\frac{\a+2}{2}}s\right), z_n\left( t_n +\eps_n^{\frac{\a+2}{2}}s\right) \right).
\]
Let us observe that if $s \in [s_n^-,s_n^+]$, then $t_n +\eps_n^{(\a+2)/2}s \in [t_n^-,t_n^+]$, and if $s \in [c_n,d_n]$, then $t_n +\eps_n^{(\a+2)/2}s \in [c,d]$. In cylindrical coordinates, we write $w_n= (v_n, \zeta_n) = \left(r_n \exp\{i \phi_n\},\zeta_n\right)$, where
\[
r_n(s)=\frac{1}{\eps_n} \rho_n\left(t_n+\eps_n^{ \frac{\a+2}{2}  }s\right), \quad \phi_n(s)=\t_n\left(t_n+\eps_n^{ \frac{\a+2}{2}  }s\right), \quad
\zeta_n(s) = z_n\left(t_n+\eps_n^{ \frac{\a+2}{2}  }s\right).
\]
In light of Lemma \ref{minimum C^1},  any $w_n$ is of class $\mathcal{C}^1$, and
\[
|v_n(s)| = 1 \quad \text{for $s \in [s_n^-,s_n^+]$}, \qquad |v_n(s)| > 1 \quad \text{for $s \in [c_n,s_n^-) \cup (s_n^+,d_n]$}.
\]
In what follows, we focus on the sequence $(v_n)$ of the planar components of $q_n$. The restrictions of $v_n$ on $[c_n,s_n^-)$ and $(s_n^+,d_n]$ are of class $\mathcal{C}^2$, and satisfy
\[
\ddot{v}_n(s) = -\frac{m}{\o_n^2 \alpha |v_n(s)|^{\alpha+2}} v_n(s) + \eps_n^{\alpha+1} \partial_\rho V_0 \left(w_n(s)\right) = -\frac{m}{\o_n^2 \alpha |v_n(s)|^{\alpha+2}} v_n(s) + O(\eps_n^{\alpha+1}),
\]
where we use the boundedness of $\pa_\rho V_0$ in $\overline{\Gamma}_1$. This suggests to consider the quantity
\[
\bar{h}_n(s):= \frac{1}{2}|\dot{v}_n(s)|^2- \frac{m}{\o_n^2 \a \left|v_n(s)\right|^\a},
\]
which is the energy of the function $v_n$ for the potential of the $\a$-Kepler's problem.  This is not a constant function in $[c_n,d_n]$, however it can be easily controlled using the conservation of the energy of $q_n$:
\begin{align*}
\bar{h}_n(s) & = \eps_n^\a \left[ \frac{1}{2}\left| \dot{u}_n\left( t_n+\eps_n^{ \frac{\a+2}{2}  }s\right)\right|^2- \frac{m}{\o_n^2 \a \left|u_n\left(t_n+\eps_n^{ \frac{\a+2}{2}  }s\right)\right|^\a} \right]\\
& = \eps_n^\a \left[ \frac{h}{\o_n^2} + \frac{1}{\o_n^2} V_0\left(w_n(s)\right)- \frac{1}{2} \dot{\zeta}_n^2(s)\right].
\end{align*}
In light of points ($i$) of Lemma \ref{minimum C^1} and ($v$) of Lemma \ref{proprietà  di u_n}, and of the boundedness of $V_0$, we deduce that
\[
\lim_{n \to \infty} \bar{h}_n(s)=0 \quad \text{for every $s \in \R$},
\]
with uniform convergence in any closed interval of $\R$. 

Concerning the angular momentum of $v_n$, we set $\mathfrak{C}_{v_n}(s):= r_n^2(s) \dot \phi_n(s)$. For every $s \in (s^-_n,s_n^+)$, it results
\[
\mathfrak{C}_{v_n}(s) = \eps_n^{\frac{\alpha+2}{2}} \dot \t_n\left( t_n+\eps_n^{ \frac{\a+2}{2}  }s \right) = \eps_n^{\frac{\alpha-2}{2}}
\mathfrak{C}_{n}\left(t_n+\eps_n^{ \frac{\a+2}{2}  }s \right) = \sqrt{\frac{2m}{\o_n^2 \a}} \left(1+O(\eps_n^{\a+1})\right),
\]
where we used the first estimate of Lemma \ref{prop 2.12}. Hence,
\beq\label{convergenza di c_n}
\lim_{n \to \infty} \mathfrak{C}_{v_n}(s) = \sqrt{\frac{2 m}{\Omega^2 \a}},
\eeq
with uniform convergence in $[s^-,s^+]$ (for the reader's convenience, we recall that $\Omega=\lim_n \o_n$). In particular, the sequence $\left(\mathfrak{C}_{v_n}|_{[s^-,s^+]}\right)$ is uniformly bounded.

Now, let us consider the angular component $\phi_n$ of $v_n$. Starting from point ($iii$) of Lemma \ref{proprietà  di u_n}, and recalling the boundedness of $\pa_z V_0$ and of the sequence $(\dot{z}_n)$ (in $L^\infty(c,d)$), we obtain an equation for $\phi_n$ when $s \in (s_n^-,s_n^+)$:
\[
\ddot{\phi}_n(s) = \frac{\eps_n^{\a}}{\o_n^2 \dot{\phi}_n(s)} \pa_z V_0\left(\eps_n,\zeta_n(s) \right) \dot{\zeta}_n(s).
\]
Observe that in $(s_n^-,s_n^+)$ it results $\mathfrak{C}_{v_n}=\dot{\phi}_n$, and consequently
\[
\ddot{\phi}_n(s) = \frac{\eps_n^{\a}}{\o_n^2 (1+O(\eps_n^{\alpha+1})) } \sqrt{\frac{\omega_n^2 \alpha}{2m}} \pa_z V_0\left(\eps_n,\zeta_n(s) \right) \dot{\zeta}_n(s) = O(\eps_n^\alpha).
\]
Thus the restriction $v_n|_{(s_n^-,s_n^+)}$ is of class $\mathcal{C}^2$ and satisfies
\begin{align*}
\ddot{v}_n(s)  &= \ddot{\phi}_n(s) i e^{i\phi_n(s)}- \left(\dot{\phi}_n(s)\right)^2 e^{i\phi_n(s)} = \ddot{\phi}_n(s) i v_n(s)- \mathfrak{C}_{v_n}(s)^2 v_n(s) \\
&= - \mathfrak{C}_{v_n}(s)^2 v_n(s) + i v_n(s) O(\eps_n^{\a}).
\end{align*}

To sum up up,
\beq\label{equazione per v_n}
\ddot{v}_n = \begin{cases}
                -\ddfrac{m}{\o_n^2 \alpha |v_n(s)|^{\alpha+2}} v_n(s) + O(\eps_n^{\alpha+1})  &  \text{in $[c_n,s_n^-) \cup (s_n^+,d_n]$} \\
                \\
                 - \mathfrak{C}_{v_n}(s)^2 v_n(s) + i v_n(s) O(\eps_n^{\a}) & \text{in $(s_n^-,s_n^+)$}.
                 \end{cases}
\eeq
This shows that, although $v_n$ is not necessarily of class $\mathcal{C}^2$ in $s_n^-$ and $s_n^+$, there exist the right and left limits of the second derivative at these points.

\begin{lemma}\label{convergenza per v_n}
There exists a subsequence of $(v_n)$ which converges in $\mathcal{C}^1_{\loc}(\R)$.
\end{lemma}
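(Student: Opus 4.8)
The plan is to derive uniform $\mathcal{C}^1$ bounds for the sequence $(v_n)$ on every compact subinterval of $\R$ and then invoke the Ascoli--Arzelà theorem together with a diagonal extraction. Since $c_n \to -\infty$ and $d_n \to +\infty$, any fixed interval $[-R,R]$ is contained in the domain $[c_n,d_n]$ of $v_n$ for all $n$ large enough; it therefore suffices to bound $\|v_n\|_{\mathcal{C}^0([-R,R])}$, $\|\dot v_n\|_{\mathcal{C}^0([-R,R])}$ and the second derivative (where it exists) by constants independent of $n$. The mechanism that makes all these bounds available is that, after the rescaling, the obstacle $\{|u|=\eps_n\}$ becomes $\{|v|=1\}$, so that $|v_n(s)| \ge 1$ uniformly and the blow-up never approaches the singularity of the Kepler potential.

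First I would bound the velocity. By definition of $\bar h_n$ one has $\tfrac12 |\dot v_n(s)|^2 = \bar h_n(s) + \tfrac{m}{\o_n^2 \a |v_n(s)|^{\a}}$; since $|v_n| \ge 1$ and $\o_n \to \Omega>0$, the Kepler term is at most $m/(\o_n^2\a) \le C$, while $\bar h_n \to 0$ uniformly on $[-R,R]$. Hence $(\dot v_n)$ is uniformly bounded on $[-R,R]$. Because $0 \in [s_n^-,s_n^+]$ we have $|v_n(0)|=1$, so integrating the velocity bound starting from $s=0$ gives a uniform bound on $v_n$ on $[-R,R]$ as well.

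Next I would bound the acceleration through \eqref{equazione per v_n}. On $[c_n,s_n^-) \cup (s_n^+,d_n]$ the inequality $|v_n| \ge 1$ yields $|v_n|/|v_n|^{\a+2} = |v_n|^{-(\a+1)} \le 1$, so $|\ddot v_n| \le m/(\o_n^2 \a) + O(\eps_n^{\a+1}) \le C$; on $(s_n^-,s_n^+)$ we have $|v_n| = 1$ and $\mathfrak{C}_{v_n}$ is bounded by \eqref{convergenza di c_n}, whence $|\ddot v_n| \le \mathfrak{C}_{v_n}^2 + O(\eps_n^{\a}) \le C$. The one delicate point, which I regard as the main obstacle, is that each $v_n$ is only of class $\mathcal{C}^1$ and fails to be $\mathcal{C}^2$ at the junction times $s_n^{\pm}$, so \eqref{equazione per v_n} controls only the one-sided second derivatives there. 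This is nonetheless enough: by Lemma \ref{minimum C^1} the derivative $\dot v_n$ is continuous on the whole domain, and the bounds just obtained show that it has a uniformly bounded derivative on each of the finitely many subintervals of $\mathcal{C}^2$-regularity. Consequently $\dot v_n$ is uniformly Lipschitz, hence equicontinuous, on $[-R,R]$.

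With $(v_n)$ and $(\dot v_n)$ uniformly bounded and equicontinuous on $[-R,R]$, the Ascoli--Arzelà theorem provides a subsequence along which both converge uniformly on $[-R,R]$. Running this over $R = 1,2,3,\dots$ and extracting diagonally yields a single subsequence of $(v_n)$ converging in $\mathcal{C}^1_{\loc}(\R)$, which is the assertion of the lemma.
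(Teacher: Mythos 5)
Your proposal is correct and follows essentially the same route as the paper: a uniform $L^\infty$ bound on $\ddot v_n$ from \eqref{equazione per v_n} (using $|v_n|\ge 1$ and the bound on $\mathfrak{C}_{v_n}$), a velocity bound from the energy $\bar h_n$ anchored at $|v_n(0)|=1$, then Ascoli--Arzel\`a and a diagonal extraction. Your explicit handling of the mere one-sided differentiability of $\dot v_n$ at $s_n^{\pm}$ (concluding that $\dot v_n$ is uniformly Lipschitz) is a point the paper leaves implicit, but it is the same argument.
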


\begin{proof}
Let $[a,b] \subset \R$, with $a \le 0\le b$. Equation \eqref{equazione per v_n}, together with the uniform bound on $\mathfrak{C}_{v_n}|_{[s^-,s^+]}$ and the fact that $\inf_{[c_n,d_n]} |v_n|=1$, implies that the sequence $(\ddot{v}_n)$ is bounded in $L^\infty(a,b)$. Moreover, from the expression of the energy $\bar h_n$, we deduce
\begin{align*}
|\dot{v}_n(0)|^2 = 2 \bar h_n(0) + \frac{2m}{\o_n^2 \a |v_n(0)|}; 
\end{align*}
since $|v_n(0)|=1$ for every $n$, and both $(\o_n^2)$ and $(\bar h_n(0))$ are bounded, the sequence $(\dot{v}_n(0))$ is bounded, too. To sum up, $(\ddot{v}_n)$ is bounded in $L^\infty(a,b)$, and, up to a subsequence, both $(\dot{v}_n(0))$ and $(v_n(0))$ converge; hence, it is possible to apply the Ascoli-Arzel\`a theorem, to deduce that (up to a subsequence) $v_n|_{[a,b]} \to v|_{[a,b]}$ in $\mathcal{C}^1(a,b)$. A standard diagonal selection gives the desired result.
\end{proof}

We call $\bar v: \R \to \R^2$ the limit of $(v_n)$ in $\mathcal{C}^1_{\loc}(\R)$. We write $\bar v= \bar r \exp\{i \bar \phi\}$. By equation \eqref{equazione per v_n}, we see that the sequence $(\ddot{v}_n)$ uniformly converges in every compact subset of $\R \setminus \{s^-,s^+\}$, so that $\bar v \in \mathcal{C}^2\left(\R \setminus \{s^-,s^+\}\right)$, and
\begin{itemize}
\item $\bar v$ is a classical solution of the $\a$-Kepler's problem
\[
\ddot{v}(s) = -\frac{m}{ \Omega^2 |\bar v(s)|^{\a+2}}v(s) \quad \text{for $s \in (-\infty,s^-) \cup (s^+,+\infty)$}.
\]
\item $\bar v$ has constant energy, equal to $0$ (even in $[s^-,s^+]$),
\item $\bar v$ has constant angular momentum, whose modulus is $ \mathfrak{C}_{\bar v}=\sqrt{2m/(\Omega ^2 \a)}$ (even in $[s^-,s^+]$); indeed, the equation for $\bar v$ and the \eqref{convergenza di c_n} imply that $\mathfrak{C}_{\bar v}$ is constant in the three intervals $(-\infty,s^-)$, $[s^-,s^+]$ and $(s^+,+\infty)$, and $\mathfrak{C}_{\bar v}=\sqrt{2m/(\Omega ^2 \a)}$ in $[s^-,s^+]$. Using the differentiability of $\bar v$ in $s^\pm$, it is not difficult to prove that the previous expression of $\mathfrak{C}_{\bar v}$ holds in the whole $\R$;
\item $|\bar v(s)|=1$ for $s \in [s^-,s^+]$, and $|\bar v(s)|>1$ for $s \in (-\infty,s^-)\cup (s^+,+\infty)$.
\end{itemize}
Let $\bar \phi^-:=\bar \phi(s^-)$, $\bar \phi^+:=\bar \phi(s^+)$. Thanks to the conservation of the angular momentum, the function $s \mapsto \bar \phi(s)$ is strictly monotone; it is not restrictive to assume that it is increasing, and it makes sense to consider
\[
\bar \phi(+\infty)=\lim_{s \to +\infty} \bar \phi(s), \qquad \bar \phi(-\infty)=\lim_{s \to -\infty} \bar \phi(s).
\]
Writing the energy in polar coordinates, we observe that
\[
ds=\frac{d\bar r}{\sqrt{\frac{2m}{\a \Omega^2 \bar r^\a}- \left(\frac{\mathfrak{C}_{\bar v}}{\bar r}\right)^2 }} = \frac{d\bar r}{\mathfrak{C}_{\bar v} \sqrt{\frac{1}{\bar{r}^{\alpha}} - \frac{1}{\bar r^2}    }}.
\]
Hence
\[
\bar \phi(+\infty)-\bar \phi^+  = \int_{s^+}^{+\infty} \frac{d \bar \phi}{ds}\,ds
 = \int_1^{+\infty} \frac{d\bar r}{\bar r^2\sqrt{\frac{1}{\bar r^\a}-\frac{1}{\bar r^2}}} = \int_0^1 \frac{d\xi}{\sqrt{\xi^\a-\xi^2}}.
\]
The same computation holds true for $\bar \phi^- - \bar \phi(-\infty)$. With the change of variable $\xi= \eta^{\frac{2}{2-\a}}$, we obtain
\begin{align*}
\bar \phi(+\infty)- \bar \phi^+ &= \bar \phi^- - \bar \phi(-\infty) = \frac{2}{2-\a} \int_0^1 \frac{ \eta^{ \frac{\a}{2-\a} } }{ \sqrt{ \eta^{\frac{2\a}{2-\a}} - \eta^{\frac{4}{2-\a} }  } }\,d\eta \\
&=\frac{2}{2-\a} \int_0^1 \frac{ d\eta }{ \sqrt{1-\eta^2} } = \frac{\pi}{2-\a}.
\end{align*}

This permits to obtain the following estimate for the total variation of the angle $\bar \phi$:
\beq\label{*40}
\bar \phi(+\infty)-\bar \phi(-\infty) = \frac{2\pi}{2-\a} +\bar \phi^+- \bar \phi^- \geq \frac{2\pi}{2-\a}.
\eeq
On the other hand, let us note that any function $\phi_n$ is strictly monotone; this is an immediate consequence of the monotonicity of $\t_n$ (point ($iii$) of Lemma \ref{proprietà di u_n}). We know that $\phi_n$ uniformly converges to $\bar \phi$ on every closed interval $[a,b]$ of $\R$. Thus, for $n$ sufficiently large,
\[
\phi_n(b)-\phi_n(a) \leq \phi_n(d_n)-\phi_n(c_n) = \hat \t_1.
\]
Passing to the limit for $n \to \infty$, we deduce that
\[
\bar \phi(b)-\bar \phi(a) \leq \hat \t_1 <\frac{2\pi}{2-\alpha}.
\]
Since $a$ and $b$ are arbitrarily chosen, we can take $a \to -\infty$ and $b \to +\infty$, to obtain
\beq\label{**40}
\bar \phi(+\infty)-\bar \phi(-\infty) < \frac{2\pi}{2-\alpha}.
\eeq
Comparing \eqref{*40} and \eqref{**40} we have a contradiction, and the proof of Proposition \ref{teorema 2.7} is complete.

\section{Proof of Theorem \ref{thm: main 3}}

For a fixed $k$, let $\gamma = \bar u([c,d])$ be a connected component of $\overline{\Xi}_k \cap \bar u([a,b])$. We assume that $[c,d]=[0,1]$ to simplify the notation. Let $\hat \theta$ denote the angle between $\bar u(0)$ and $\bar u(1)$ with respect to the pole $c_k$; we explicitly remark that $|\bar u(0)|= |\bar u(1)|$. As explained in the introduction, having assumed that the interaction between $\bar u$ and $\Sigma$ is locally Keplerian, for every $k$ it is possible to introduce polar coordinates $u = c_k+ \rho \exp\{i \theta\} \in \R^2$, in such a way that 
\[
V(u) = \frac{m_k}{\alpha_k |u-c_k|^{\alpha_k}} + V_0(u) 
\]
for $u$ in $\Xi_k$; we recall that $m_k>0$ and $V_0 \in \mathcal{C}^1\left(\overline{\Xi}_k\right)$. Since we proceed with a local argument, we simply write $m$, $\alpha$, $\bar u$, $\Xi$ instead of $m_k$, $\alpha$, $\bar u|_{[0,1]}$ $\Xi_k$, to simplify the notation. Moreover, for the sake of simplicity, we suppose $c_k=0$.

We are assuming that one of the following alternatives hold true:
\begin{itemize}
\item $\alpha \in (1,2)$ and $\bar u\in K_e$ or in $K_o$;
\item $\alpha=1$, $\bar u(0) \neq \bar u(1)$ and $\bar u \in K_e$ or $K_o$;
\item $\alpha=1$, $\bar u(0)= \bar u(1)$ and $\bar u \in K_o$;
\end{itemize}

All the cases can be treated simultaneously; in what follows, we emphasize when a particular assumption plays a role. We assume that $\bar u$ has a collision in $(0,1)$, and we wish to show that this gives a contradiction with our assumptions. 

As already observed, a lot of results of the previous section still holds true in the present situation. In what follows, we give a brief account of those we can preserve, and those we have to neglect. 

All the results of Subsection \ref{sub: basic 3d} hold true, except Lemma \ref{lem: angolo costante} and the subsequent discussion. Let $T_c(\bar u)$ be the set of the collision times of $\bar u$. If $(c,d)$ is a connected component of $[0,1] \setminus T_c(u)$, then $\bar u|_{(c,d)}$ is a classical solution of 
\begin{equation}\label{equazione per u}
\bar \omega^2 \ddot{\bar u}= \nabla V(\bar u), \quad \text{where} \quad \bar \omega^2 = \frac{\int_c^d (V(\bar u)+h)}{\frac{1}{2} \int_c^d \dot{\bar u}^2}.
\end{equation}
Moreover, it holds
\begin{equation}\label{energia per planare} 
\frac{1}{2} |\dot{\bar u}|^2-\frac{V(\bar u)}{\bar \o^2} =\frac{h}{\bar \o^2} \qquad \text{a.e. in $[0,1]$}.
\end{equation}
The set $T_c(\bar u)$ is discrete and finite.

\begin{lemma}\label{lem: no autointersezioni}
One of the following situation occurs:
\begin{itemize}
\item[($i$)] $\bar u$ parametrizes a path without self-intersections at points different from the centres $c_k$,
\item[($ii$)] $\bar u$ parametrizes a path with self-intersections at point different from the centres; in such a situation, $\bar u$ has at least one collision, and at a certain collision-time $t_1$ there is a reflection:
\[
\bar u(t_1+t) = \bar u(t_1-t)  \qquad \text{for every $t$ in a neighbourhood of $t_1$}.
\]
\end{itemize}
\end{lemma}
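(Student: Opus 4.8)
The plan is to argue by a reversal/reconnection technique, exploiting three structural facts already available: on each collision-free subinterval $\bar u$ solves the \emph{reversible} equation \eqref{equazione per u} and is $C^1$; as a minimizer of $\mathcal{M}_h$ it minimizes the additive Jacobi length $\mathcal L_h$; and the parity classes $K_e,K_o$ are insensitive to reversing a closed sub-loop. I assume, negating ($i$), that $\bar u$ has a self-intersection at a non-central point, i.e.\ there are $t_1<t_2$ in $(0,1)$ with $\bar u(t_1)=\bar u(t_2)=P$ and $P\neq c_j$ for all $j$, and I aim to produce a collision together with the reflection of ($ii$).

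First I would introduce the competitor $\tilde u$ obtained from $\bar u$ by reversing the inner arc, $\tilde u(t)=\bar u(t_1+t_2-t)$ for $t\in[t_1,t_2]$ and $\tilde u=\bar u$ elsewhere. Since $\bar u(t_1)=\bar u(t_2)$, the inner arc is a closed loop, so its winding number about each centre is an integer $w_j\in\Z$; reversing it changes each total winding number by the even amount $-2w_j$, hence $\tilde u$ lies in the same class $K_e$ or $K_o$ as $\bar u$. As $\mathcal L_h$ is additive and invariant under time-reversal of a piece, $\mathcal L_h(\tilde u)=\mathcal L_h(\bar u)$, so $\tilde u$ is again a minimizer. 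Next I would use that a minimizer is $C^1$ off collisions: if the crossing at $P$ were transversal, then $\tilde u$ would have a genuine corner at $t_1$ (its incoming velocity is $\dot{\bar u}(t_1^-)$ while the outgoing one is $-\dot{\bar u}(t_2^-)$, not collinear), and rounding that corner would strictly decrease $\mathcal L_h$, contradicting the minimality of $\tilde u$. Therefore $\dot{\bar u}(t_1)\parallel\dot{\bar u}(t_2)$, and since the speed is a function of the position through \eqref{energia per planare}, in fact $\dot{\bar u}(t_1)=\pm\dot{\bar u}(t_2)$.

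I would then show that a collision is forced inside $(t_1,t_2)$. Suppose there were none. In the $+$ case $t_1$ and $t_2$ represent the same phase point, so the orbit is $(t_2-t_1)$-periodic and the closed loop can be excised; after the parity check above this strictly lowers $\mathcal L_h$ unless the excision alters the class, and a short analysis rules this out, contradicting minimality. In the $-$ case $\tilde u$ is $C^1$ at $t_1$, so $\bar u$ and $\tilde u$ share position and velocity there and coincide by uniqueness for \eqref{equazione per u}; this yields $\bar u(t_1+s)=\bar u(t_2-s)$, i.e.\ a reflection about $s_0=(t_1+t_2)/2$ with $\dot{\bar u}(s_0)=0$, which is impossible at a regular point since \eqref{energia per planare} gives $\tfrac12|\dot{\bar u}(s_0)|^2=(V(\bar u(s_0))+h)/\bar\omega^2>0$ on $\overline{\Xi}_k$. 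Hence there is a collision time $t^*\in(t_1,t_2)$.

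Finally, with a collision present I would propagate the coincidence $\bar u\equiv\tilde u$ up to the first collision and combine the time-reversibility of \eqref{equazione per u} with the uniqueness theorem for ejection–collision solutions to conclude that the incoming and outgoing branches at $t^*$ are mirror images, i.e.\ $\bar u(t^*+t)=\bar u(t^*-t)$ in a neighbourhood of $t^*$, which is exactly alternative ($ii$). I expect this last step to be the main obstacle: turning the symmetry $\bar u=\tilde u$ and the reversibility of the motion into a genuine local reflection at the collision requires the asymptotic description of collision orbits of the $\alpha$-Kepler problem (and, for $\alpha=1$, the Levi-Civita regularization) together with a uniqueness statement for such orbits; the degenerate same-direction tangency and the bookkeeping of the parity class under the surgeries are the delicate points that must be handled with care.
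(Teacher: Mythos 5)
Your overall strategy is the paper's: reverse the inner loop to get an equal-action competitor $\tilde u$ in the same parity class, use the $C^1$-regularity of collision-free minimizers to exclude a transversal crossing, and use uniqueness together with the reversibility of \eqref{equazione per u} to reduce the tangential case to the reflection of alternative ($ii$). There is, however, one concrete gap: your treatment of the same-direction tangency $\dot{\bar u}(t_1)=+\dot{\bar u}(t_2)$ by excising the periodic loop does not work as stated. Excision changes the winding number about $c_k$ by the loop's winding number $w$, not by $2w$, so it changes the parity class whenever $w$ is odd, and the ``short analysis'' you defer to is not available in general. The repair is already in your construction: in the $+$ case the competitor $\tilde u$ has incoming velocity $\dot{\bar u}(t_1)$ and outgoing velocity $-\dot{\bar u}(t_2)=-\dot{\bar u}(t_1)\neq 0$ at $t_1$, i.e.\ a cusp, so the very same corner/regularity contradiction you invoke for transversal crossings applies verbatim. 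This is exactly how the paper disposes of the transversal and the same-direction cases in a single stroke: the reversed path $v$ would be a minimizer of $\mathcal M_h$ in $K_e$ or $K_o$ which is collision-free on an interval around $t_*$, hence a classical solution there, yet fails to be $C^1$.

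On the final step you are anticipating more machinery than the paper uses. In this lemma the paper does not appeal to the asymptotics of $\alpha$-Kepler collision orbits, to Levi-Civita, or to any uniqueness statement for ejection--collision solutions (Levi-Civita enters only later, in the proof of Proposition \ref{prop: core prop in 2D}, and for a different purpose). In the opposite-tangent case the identity $\bar u(t_1+s)=\bar u(t_2-s)$, obtained from uniqueness and reversibility exactly as you describe, is read as a reflection of the trajectory about the midpoint $s_0=(t_1+t_2)/2$; since $|\dot{\bar u}|>0$ at every regular time (because $\overline{\Xi}_k$ is compactly contained in the Hill region), the symmetry centre cannot be a regular point, so it is a collision time and the identity is precisely the reflection required by ($ii$). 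The only point needing care is that the symmetry propagates (through uniqueness on regular subintervals) up to the centre $s_0$ itself; the paper treats this as immediate and does not analyse the flow through the singularity at all. So your plan is sound, but the $+$-case surgery must be replaced by the cusp argument, and the concluding step is elementary rather than the ``main obstacle'' you expect.
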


\begin{remark}
Point ($ii$) do not imply that, if a collision occurs, $\bar u$ is a collision-ejection minimizer.
\end{remark}

\begin{proof}
Assume that we are not in case ($ii$). Then either we are in case ($i$), or $\bar u$ has a self-intersection in a point $p \in \Xi_k \neq \{c_k\}$, that is, $p=u(t_*)=u(t_{**})$; in this case, if $\bar u$ has a collision, then there is not any reflection with respect to the collision-time. Assume by contradiction that we are in this latter situation. Let $(c,d)$ the connected component of $[0,1] \setminus T_c(u)$ containing $t_*$. We know that $\bar u|_{(c,d)}$ is a $\mathcal{C}^2$ solution of \eqref{equazione per u} in $(c,d)$. Since $\Xi$ is compactly contained in the Hill's region $\{V(u) > -h\}$, the energy integral says that $|\dot{\bar u}(t)|>0$ for almost every $t \in [0,1]$. Hence, both $\dot{u}(t_*)$ and $\dot{u}(t_{**})$ are different from $0$. One of the following alternatives has to occur: $\dot{\bar u}(t_*)$ is transversal to $\dot{\bar u}(t_{**})$, or $\dot{\bar u}(t_*)$ is tangential to $\dot{\bar u}(t_{**})$ with same or opposite direction. In the first two cases, let us define $v:[0,1] \to \R^2$ as follows:
\[
v(t)=\begin{cases}
      \bar u(t) & t \in [0,t_*] \cup (t_{**},1] ,\\
      \bar u\left(\frac{t-t_*}{t_{**}-t_*} t_* +\left(1-\frac{t-t_*}{t_{**}-t_*}\right)t_{**}\right) & t \in (t_*,t_{**}].
      \end{cases}
\]
The function $v$ parametrizes a path with $\bar u([0,1])=v([0,1])$, but it travels along the loop connecting $\bar u(t_*)$ and $\bar u(t_{**})$ with the reversed orientation. This operation does not change the parity of the winding number with respect to the pole $c_k$, so that $v$ stays in $K_e$ if $\bar u$ does, and stays in $K_o$ if $\bar u$ does. We point out that $v$ is also a local minimizer of $\mathcal M_h$, since $\mathcal M_h(u)=\mathcal M_h(v)$.  On the other hand, it is immediate to check that, unless $\dot{u}(t_*)=\dot{u}(t_{**})=0$, $v$ is not of class $\mathcal{C}^1$ in $t_*$ and $t_{**}$. So, we have a new minimizer of $\mathcal M_h$ in $K_e$ or $K_o$, which is collision-free in an interval $(a,d)\ni t_*$, and hence there should be a classical solution of \eqref{equazione per u}; but this is not possible, since $v|_{(a,d)} \notin \mathcal{C}^1((a,d))$, a contradiction. 

It remains to consider the possibility that $\bar u$ has a tangential self-intersection, with $\dot{\bar u}(t_*)=-\dot{\bar u}(t_{**})$; this situation can be easily ruled out by the uniqueness theorem for initial value problem, taking into account the reversibility of the first equation in \eqref{equazione per u} with respect to the involution $t \mapsto -t$: indeed, it turns out that $\bar u(t_*+t) = \bar u(t_{**}-t)$, but, since $\Xi \subset \subset \{V(u) >-h\}$, so that $\dot{\bar u}(t) \neq 0$ for almost every $t \in (0,1)$, this is possible only if we are in case ($ii$) of the statement. 
\end{proof}

Also in the planar case we pass from a global analysis of the minimizer $\bar u$ to a local study in a neighbourhood of a collision. This is possible because the collisions are isolated: if $\bar u$ has a collision at time $t_1$, then there exist $c,d \in [0,1]$ such that $c<t_1<d$, and $t_1$ is the unique collision time in $[c,d]$; we can choose $c$ and $d$ in such a way that 
\begin{itemize}
\item the function $I=|\bar u|^2$ is strictly convex in $(c,d)$;
\item $|\bar u(c)|=|\bar u(d)|$; we set $\hat \rho:= |\bar u(c)|$, and choose $c,d$ so that $\bar u(c), \bar u(d) \in \partial \Xi$.
\end{itemize} 
We set $\hat \t_1:= \bar \t(d)-\bar \t(c)$; since equation \eqref{equazione per u} is reversible with respect to the involution $t \mapsto -t$, it is not restrictive to assume that $\hat \t_1 \ge 0$. Moreover, in light of Lemma \ref{lem: no autointersezioni}, we can assume that $\hat \t_1 \in [0,2\pi]$. 

Let
\[
\wh{\mathcal{K}}:= \left\lbrace u \in H^1\left([c,d]\right) \left| \begin{array}{l}
\text{$|u(t)| \neq 0$ and $u(t) \in \overline{\Xi}$ for every $t \in [c,d]$,}\\
\text{$u(c)=\bar p_1$, $u(d)= \bar p_2$, and the function} \\
\begin{cases} \bar u(t) & t \in [0,c) \cup (d,1] \\ u(t) & t \in [c,d] \end{cases} \text{ belongs to $K_l$,} \\
\end{array}  \right. \right\},
\]
and let $\mathcal{K}$ be its closure with respect to the weak topology of $H^1$. Let
\[
\mathcal{K}_{\eps}:= \left\{ u \in \mathcal{K}:  \min_{t \in [c,d]} |u(t)|=\eps \right\},
\]
and let
\[
d(\eps):= \inf \left\lbrace \mathcal{M}_{h}(u) : u \in \mathcal{K}_\eps \right\rbrace .
\]
Also, for any $0<\eps_1<\eps_2$, let
\[
\mathcal{K}_{\eps_1,\eps_2}:=\left\lbrace u \in \mathcal{K}: \min_{t \in [c,d]} |u(t)| \in [\eps_1,\eps_2]\right\rbrace,
\]
and let
\[
m(\eps_1,\eps_2):= \inf \left\{ \mathcal{M}_{h}(u): u\in \mathcal{K}_{\eps_1, \eps_2} \right\}.
\]
It is not difficult to check that the result of the Subsection \ref{sub: obstacle} hold true in the present situation. The value $d(0)$ is the infimum of $\mathcal{M}_h$ on the collision elements of $\mathcal{K}$, and, by assumption, is achieved by $\bar u|_{[c,d]}$; this is the unique minimizer of $\mathcal{M}_h$ in $\mathcal{K}$. For any $\eps>0$ sufficiently small, the value $d(\eps)$ is achieved by a function $u_\eps \in \mathcal{K}_{\eps}$, and for any $0<\eps_1<\eps_2$, with $\eps_2$ is sufficiently small, the value $m(\eps_1,\eps_2)$ is achieved by $u_{\eps_1, \eps_2} \in \mathcal{K}_{\eps_1, \eps_2}$. The function $\eps \mapsto d(\eps)$ is continuous in $0$.

Concerning Proposition \ref{teorema 2.7}, we have to modify its statement in the following way. We recall that we are assuming that $\bar u$ has a collision in $t_1 \in (c,d)$.

\begin{proposition}\label{prop: core prop in 2D}
\begin{itemize}
\item[($i$)] Let us assume that $\alpha_k \in (1,2)$. There exists $\bar{\eps}>0$ such that, if $ 0<\eps_1<\eps_2 \leq \bar{\eps}$, then $\mathcal{Z}_{\eps_1, \eps_2} = \emptyset$. 
\item[($ii$)] Let us assume that $\alpha_k=1$. Then, one of the following alternative occurs:
\begin{itemize}
\item[($a$)] there exists $\bar{\eps}>0$ such that, if $ 0<\eps_1<\eps_2 \leq \bar{\eps}$, then $\mathcal{Z}_{\eps_1, \eps_2} = \emptyset$;
\item[($b$)] $\bar u$ is a collision-ejection minimizer, with a unique collision in $t_1$. This is possible only if $\bar u(c) = \bar u(d)$.
\end{itemize}
\end{itemize}
\end{proposition}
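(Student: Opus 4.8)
The plan is to repeat, almost verbatim, the contradiction-and-blow-up scheme of Proposition \ref{teorema 2.7}, and to pinpoint the single place where the spatial proof exploited the cylindrical symmetry of $V_0$. Assuming the statement fails, I would produce sequences $0<\eps_n<\bar\eps_n\to 0$ and minimizers $u_n\in\mathcal K_{\eps_n,\bar\eps_n}$ with $\min_{[c,d]}|u_n|=\eps_n$ and $\mathcal M_h(u_n)=d(\eps_n)\to d(0)$, and then perform the blow-up $w_n(s)=\eps_n^{-1}u_n\bigl(t_n+\eps_n^{(\a+2)/2}s\bigr)$ about the obstacle interval $[t_n^-,t_n^+]$. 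Everything in the preparation survives in the planar setting except the conservation of the angular momentum: the analogues of Lemma \ref{minimum C^1}, of the two estimates of Lemma \ref{prop 2.12} (which rely only on the energy relation on $\{|u|=\eps_n\}$, still available since $V_0$ is bounded), and of the compactness Lemma \ref{convergenza per v_n} all go through. Thus, up to a subsequence, $w_n\to\bar v$ in $\mathcal C^1_{\loc}(\R)$, where $\bar v$ is a zero-energy solution of the $\a$-Kepler problem with $|\mathfrak C_{\bar v}|=\sqrt{2m/(\Omega^2\a)}$, and the computation producing \eqref{*40} is unchanged and yields the lower bound
\[
\bar\phi(+\infty)-\bar\phi(-\infty)=\frac{2\pi}{2-\a}+\bigl(\bar\phi^+-\bar\phi^-\bigr)\ge\frac{2\pi}{2-\a}.
\]

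The decisive new difficulty is the matching upper bound, i.e.\ the analogue of \eqref{**40}. In the spatial case this rested on point $(iv)$ of Lemma \ref{proprietà di u_n}: since $V$ was $\t$-independent the angular momentum $\mathfrak C_n=\rho_n^2\dot\t_n$ was conserved, $\t_n$ was monotone on all of $[c,d]$, and hence $\bar\phi(b)-\bar\phi(a)\le\phi_n(d_n)-\phi_n(c_n)=\hat\t_1$. In the planar case $V_0$ depends genuinely on the angle, so $\mathfrak C_n$ is no longer conserved (its derivative is the bounded torque $-\o_n^{-2}\pa_\t V_0(u_n)$) and $\t_n$ need not be globally monotone; this is exactly the step to be redone, and I expect it to be the main obstacle. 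I would recover control as follows. On the obstacle $\mathfrak C_n$ has the definite sign and size $\eps_n^{(2-\a)/2}$ given by Lemma \ref{prop 2.12}, while over a blow-up window $|t-t_n|\lesssim\eps_n^{(\a+2)/2}$ its variation is only $O(\eps_n^{(\a+2)/2})$; since $(2-\a)/2<(2+\a)/2$, the sign of $\mathfrak C_n$ persists on every fixed blow-up interval $[a,b]$, so $\phi_n|_{[a,b]}$ is monotone for $n$ large and $\bar\phi$ is strictly monotone, consistently with \eqref{convergenza di c_n}. It then remains to forbid that the \emph{local} monotone winding of $\bar v$, which is $\ge 2\pi/(2-\a)$, exceeds the \emph{global} angular budget of the minimizer. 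Here I would invoke the no-self-intersection dichotomy of Lemma \ref{lem: no autointersezioni} together with the normalisation $\hat\t_1\in[0,2\pi]$: a strictly monotone blow-up angle is incompatible with a reflecting collision (case $(ii)$), while in the non-reflecting case $(i)$ a monotone arc spanning more than $2\pi$ would wrap $\bar u$ around $c_k$ more than once, against $\hat\t_1\le 2\pi$. Transferring this topological information rigorously from $\bar v$ back to $u_n$, in the absence of a conserved momentum, is the technically hard point.

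Granting the upper bound $\bar\phi(+\infty)-\bar\phi(-\infty)\le 2\pi$, the argument splits on $\a_k$. If $\a\in(1,2)$ then $2\pi/(2-\a)>2\pi$, so the lower and upper bounds are in direct contradiction, exactly as \eqref{*40} contradicts \eqref{**40}; hence no minimizing sequence of the assumed type exists and $\mathcal Z_{\eps_1,\eps_2}=\emptyset$, which is part $(i)$. If instead $\a=1$ then $2\pi/(2-\a)=2\pi$ and the two bounds are compatible only in the degenerate configuration
\[
\frac{2\pi}{2-\a}=2\pi=\hat\t_1,\qquad \bar\phi^+-\bar\phi^-=0 ,
\]
which forces $s^-=s^+$ (no residual arc on the obstacle in the limit) and, because a net angle $\hat\t_1=2\pi$ brings the endpoint back to its start, $\bar u(c)=\bar u(d)$; the collision can no longer be excluded.

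In this $\a=1$ borderline the remaining task is to show the collision is symmetric. I would use the Levi-Civita regularization $u=\mfu^2$: in the variable $\mfu$ the Newtonian singularity at $t_1$ becomes a regular point of a smooth second-order equation, and the swept angle $2\pi$ corresponds to the regularized path crossing $\mfu=0$ along a straight line (angle $\pi$ in $\mfu$, doubled to $2\pi$ in $u$). Since the regularized equation is reversible under $t\mapsto -t$ and \eqref{equazione per u} enjoys the same reversibility, the uniqueness theorem for the initial value problem forces $\bar u(t_1+t)=\bar u(t_1-t)$ near $t_1$; together with the minimality and the isolatedness of collisions this makes $\bar u|_{[c,d]}$ a collision-ejection minimizer with the single collision $t_1$, which is alternative $(b)$ of part $(ii)$. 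The two steps I anticipate as genuinely delicate are the momentum-free derivation of the upper angular bound and the careful bookkeeping of the winding data through the Levi-Civita map.
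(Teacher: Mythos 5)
Your architecture is the paper's (contradiction via the obstacle sequences, blow-up, angle comparison, Levi--Civita for $\alpha=1$), and you correctly locate the new difficulty in the loss of angular-momentum conservation. But the step you yourself flag as ``the technically hard point'' --- the upper bound $\bar \phi(+\infty)-\bar\phi(-\infty)\le 2\pi$ --- is left genuinely open, and your proposed route aims at the wrong object. Local monotonicity of $\phi_n$ on fixed blow-up windows (via persistence of the sign of $\mathfrak{C}_n$) only tells you that $\phi_n(b)-\phi_n(a)$ equals its total variation there; it gives no bound on its size. And invoking Lemma \ref{lem: no autointersezioni} for $\bar u$ cannot work directly, because $\bar u$ is precisely the collision trajectory for which the reflecting alternative ($ii$) of that lemma is available and no angular control follows. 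The paper closes this gap with Lemma \ref{variazione dell'angolo limitata}, applied to the obstacle minimizers $u_n$ themselves: each $u_n$ is collision-free and of class $\mathcal{C}^1$ (Lemma \ref{minimum C^1}), so the loop-reversal surgery of Lemma \ref{lem: no autointersezioni} applies off the obstacle with no reflecting case to worry about, while a full wind on the obstacle is excluded because reversing the orientation there would produce another minimizer that is not $\mathcal{C}^1$. Simplicity of $u_n$ then caps its angular variation at $2\pi$, and this bound passes to $\bar\phi$ through the $\mathcal{C}^1_{\loc}$ convergence. Without this lemma neither part ($i$) nor Lemma \ref{lem: angolo se alpha = 1} is established.

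Two further points need repair. First, your deduction of $\bar u(c)=\bar u(d)$ from ``$\hat\t_1=2\pi$'' is unsupported: the equality forced by the two angle estimates concerns the blow-up angle $\bar\phi(+\infty)-\bar\phi(-\infty)$, which in the planar case is no longer comparable to $\hat\t_1$ (that comparison was exactly what the conserved angular momentum provided in $\R^3$). The paper instead obtains $\bar u(c)=\bar u(d)$ a posteriori from the reflection identity $\bar u(t_1+t)=\bar u(t_1-t)$ combined with the strict convexity of $|\bar u|^2$ on $(c,d)$. Second, in the Levi--Civita step you assume that the regularized path crosses $\mathfrak{u}=0$ ``along a straight line'': this is the conclusion, not a given, since $\bar u$ is only a minimizer at the collision and satisfies no equation there. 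The paper must regularize the collision-free sequence $w_n=\sqrt{u_n}$ (for which a variational characterization, Lemma \ref{nuovo Maupertuis}, is available --- note $\Lambda(\bar u)$ is not two-valued at the collision), prove via Lemma \ref{lem: angolo se alpha = 1} that the angle swept on the constraint vanishes in the limit so that $\bar w$ has matching one-sided derivatives at $\tau_1$, and only then apply reversibility and uniqueness to the regular equation satisfied by $\bar w$.
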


Theorem \ref{thm: main 3} follows from this proposition: indeed, in cases ($i$) or ($ii$)-($a$) it is possible to argue as in the final part of Subsection \ref{sub: obstacle}, obtaining a contradiction with the fact that $\bar u$ has a collision. If we are in case ($ii$)-($b$), we observe that necessarily $\bar p_1=\bar p_2$; by the uniqueness theorem for the initial value problem, and using the reversibility of the equation \eqref{equazione per u} with respect to the time involution $t \mapsto -t$, we deduce that $\bar u$ is a collision-ejection minimizer in the whole time interval $[0,1]$, with a unique collision.

To prove Proposition \ref{prop: core prop in 2D}, we proceed as in Subsection \ref{sub: core 3d}. Assume that there exist two sequences $(\eps_n)$, $(\bar{\eps}_n)$ converging to $0$, and a sequence $(u_n)$, such that $0<\eps_n<\bar \eps_n$, each $u_n$ belongs to $\mathcal{K}_{\eps_n, \bar \eps_n}$,
\[
\min_{t \in [c,d]} |u_n(t)|=\eps_n \quad \text{and} \quad \mathcal{M}_h(u_n)=m(\eps_n,\bar{\eps}_n)=d(\eps_n).
\]
We show that, if $\alpha \in (1,2)$, we reach a contradiction, while if $\alpha=1$, then necessarily $\bar u|_{[c,d]}$ is a collision-ejection minimizer.
Thanks to Lemma \ref{lemma 2.6}, $\mathcal{M}_h(u_n) \to d(0)$ for $n \to \infty$; as we are assuming that the minimum of $\mathcal M_h$ in $\mathcal{K}$ is achieved over collision functions, this means that $(u_n)$ is a minimizing sequence in $\mathcal{K}$. Since $\mathcal M_h$ is coercive, $(u_n)$ is bounded and, up to a subsequence, it is weakly convergent to some $\tilde u \in \mathcal{K}$; by weak lower semi-continuity, $\tilde u$ is a minimizer of $\mathcal{M}_h$ in $\mathcal{K}$, and Lemma \ref{lem: unique minimum} implies that $\tilde u=\bar u$.

Lemmas \ref{lem: conservation energy q_n} and \ref{minimum C^1} hold true in the present case (with obvious changes). As far as Lemma \ref{proprietà di u_n}, we have to neglect point ($iv$) (and the parts concerning the $z$ component, of course). However, we can bound the variation of the angle $\t_n$ both in $[c,d]$ and in $[t_n^-,t_n^+]$.

\begin{lemma}\label{variazione dell'angolo limitata}
The function $u_n$ is free of self-intersections in $[c,d]$. In particular, $\dot{\t}_n(d)-\dot{\t}_n(c) \le 2\pi$ and $\t_n(t_n^+)-\t_n(t_n^-) \le 2\pi$.
\end{lemma}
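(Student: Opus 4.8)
The plan is to read the two inequalities as consequences of the geometric \emph{simplicity} of $u_n$, and to prove that $u_n$ has no self-intersection by the same cut-and-reverse mechanism used in Lemma \ref{lem: no autointersezioni}, after first restricting drastically the possible self-intersections by means of the radial structure of $u_n$. One bound comes essentially for free: since $u_n\in\mathcal{K}_{\eps_n}\subset\mathcal{K}$, its extension by $\bar u$ lies in $K_l$, which fixes the net rotation of $u_n$ over $[c,d]$ to be $\hat\theta_1=\bar\theta(d)-\bar\theta(c)\in[0,2\pi]$; hence $\theta_n(d)-\theta_n(c)=\hat\theta_1\le 2\pi$ with no further work. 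It remains to control the rotation along the obstacle and to exclude self-intersections.

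First I would record the radial structure. By the strict convexity of $I_n:=|u_n|^2$ on every connected component of $[c,d]\setminus T_{\eps_n}(u_n)$ (obtained exactly as in the proof of Lemma \ref{proprietà di u_n}(ii), using $u_n\to\bar u$ uniformly), together with $I_n\equiv\eps_n^2$ on $[t_n^-,t_n^+]$, the radius $|u_n|$ is strictly decreasing on $[c,t_n^-]$, constant on $[t_n^-,t_n^+]$, and strictly increasing on $[t_n^+,d]$. Consequently each free arc is injective (distinct times carry distinct radii), a free arc can meet the obstacle arc only at the shared endpoints $t_n^\pm$, and the obstacle arc meets itself if and only if $\theta_n(t_n^+)-\theta_n(t_n^-)\ge 2\pi$ (recall that $\theta_n$ is strictly monotone there by Lemma \ref{proprietà di u_n}(iii)). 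Thus only two kinds of self-intersection can occur: an over-wrapping of the obstacle arc, and a crossing $p=u_n(t_*)=u_n(t_{**})$ with $t_*\in(c,t_n^-)$ and $t_{**}\in(t_n^+,d)$.

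For such a free--free crossing the radial monotonicity forces the radial components of $\dot u_n(t_*)$ and $\dot u_n(t_{**})$ to have opposite signs, so the two branches cannot be tangent with the same orientation. If they are tangent with opposite orientation, then, arguing as in Lemma \ref{lem: no autointersezioni} through the reversibility of $\omega_n^2\ddot u_n=\nabla V(u_n)$, the constancy of the energy (which fixes the speed at $p$), and the uniqueness theorem, the curve $u_n$ must retrace itself symmetrically about $t_m=(t_*+t_{**})/2$; by the $\mathcal{C}^1$-regularity of Lemma \ref{minimum C^1} this symmetry propagates through the obstacle and would force $\theta_n(t_m+s)\equiv\theta_n(t_m-s)$ there, contradicting the strict monotonicity of $\theta_n$ on $[t_n^-,t_n^+]$. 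In the remaining transversal case, and likewise to exclude the over-wrapping (reversing there one full turn of $\{|u|=\eps_n\}$), I would reverse the loop $u_n|_{[t_*,t_{**}]}$: this leaves the image, the value of $\mathcal{M}_h$, and the parity of the winding number unchanged, while producing a corner at $t_*$ in a region where any competitor must solve the Euler--Lagrange equation and hence be $\mathcal{C}^1$.

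The delicate point, and the one I expect to be the main obstacle, is that reversing a loop of nonzero winding changes the rotation number, so the reversed curve lands in a component $K_{l'}$ with $l'\neq l$ (of the same parity). To turn the corner into a genuine contradiction I would exploit that the competitor is admissible for the parity-obstacle problem: smoothing the corner strictly lowers $\mathcal{M}_h$ while preserving the obstacle value $\eps_n$ and the parity, so it yields a component with strictly smaller obstacle minimum; letting $\eps_n\to0$ and invoking the continuity of $d(\cdot)$ at $0$ (Lemma \ref{lemma 2.6}) produces a collision competitor of the same parity with $\mathcal{M}_h\le d(0)$, against the minimality of $\bar u$ in $K_e$ or $K_o$ and its uniqueness as a collision minimizer (Lemma \ref{lem: unique minimum}). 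Once every self-intersection is excluded, the obstacle arc is simple, that is $\theta_n(t_n^+)-\theta_n(t_n^-)<2\pi$, which together with $\theta_n(d)-\theta_n(c)=\hat\theta_1\le 2\pi$ concludes the proof.
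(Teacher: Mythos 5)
Your overall strategy is the paper's: reduce both angle bounds to the absence of self-intersections of $u_n$, and exclude self-intersections by producing an orientation-reversed competitor that preserves the admissible class and the obstacle value but cannot be of class $\mathcal{C}^1$. Your preliminary analysis (strict radial monotonicity of $|u_n|$ on the two free arcs from the convexity of $|u_n|^2$, the resulting classification of the possible self-intersections, and the exclusion of tangential crossings via reversibility and the monotonicity of $\t_n$ on the obstacle) is correct, and in fact more detailed than the paper's one-line reference to the argument of Lemma \ref{lem: no autointersezioni}.

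The gap is in the step that turns the corner into a contradiction. Your detour --- smooth the corner, observe that some obstacle minimum drops, let $\eps_n\to 0$ and invoke Lemma \ref{lemma 2.6} --- does not close: in the limit you only obtain a collision competitor with $\mathcal M_h\le d(0)$, and since $d(0)$ is by definition the infimum of $\mathcal M_h$ over the collision elements of $\mathcal K$, this forces equality; by Lemma \ref{lem: unique minimum} that competitor is then $\bar u$ itself, which is no contradiction at all. Moreover, the continuity of $d(\cdot)$ at $0$ is established for the fixed admissible class, so it gives no control on an obstacle minimum ``in another component''. No limiting argument is needed. The reversed competitor $v$ has the same image as $u_n$, hence $\min_t|v(t)|=\eps_n$; it has the same value of $\mathcal M_h$ (reverse the loop by $t\mapsto t_*+t_{**}-t$, which leaves both factors unchanged); and, since reversing a closed loop changes its winding contribution from $w$ to $-w$, the total rotation number changes by an even integer, so $v$ stays in the same parity class, which is the class over which the obstacle problem $m(\eps_n,\bar\eps_n)=d(\eps_n)$ is posed. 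Hence $v$ is itself a minimizer of that obstacle problem and must be of class $\mathcal{C}^1$ by Lemma \ref{minimum C^1}; but by construction it has a corner (the energy integral excludes $\dot v=0$ there, since $\overline{\Xi}$ is compactly contained in the Hill's region). This immediate contradiction is exactly how the paper concludes, both for a crossing of the two free arcs and for an over-wrapping of the obstacle arc.
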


\begin{proof}
The function $u_n$ has no self-intersections for $t \in [c,t_n^-) \cup (t_n^+,d]$. The proof is the same of that of Proposition \ref{lem: no autointersezioni}. If $u_n$ has a self-intersection on the obstacle $\left\lbrace |u|=\eps_n \right\rbrace $, the monotonicity of $\t_n$ on the obstacle (point ($iii$) of Lemma \ref{proprietà di u_n}) implies that $u_n$ makes a complete wind around it. But then we can consider the function $v$ which parametrizes the same path of $u_n$, but reverses the orientation on the obstacle. One has $\mathcal{M}_h(u_n)=\mathcal M_h(v)$, so that $v$ is a local minimizer of $\mathcal M_h$ with $\min_{t \in [c,d]} |v(t)| = \eps_n$. By minimality, $v$ satisfies the energy integral and cannot approach the obstacle with velocity $0$. Therefore, it should be a minimizer which is not $\mathcal{C}^1$, a contradiction.
\end{proof}

Having proved that the variation of the angle $\t_n$ is uniformly bounded, Lemma \ref{prop 2.12} holds true. As a consequence, it is possible to introduce the blow-up sequence 
\[
v_n(s) = \frac{1}{\eps_n} u_n\left(t_n+\eps^\frac{\alpha+2}{2}s \right)  = r_n(s) e^{i \phi_n(s)},
\]
where 
\begin{equation}\label{intro di v_n}
r_n(s) = \frac{1}{\eps_n} \rho \left(t_n+\eps^\frac{\alpha+2}{2}s \right) \quad \text{and} \quad \phi_n(s) = \t_n \left(t_n+\eps^\frac{\alpha+2}{2}s \right).
\end{equation}
It is possible to show that, up to a subsequence, $v_n \to \bar v$ in $\mathcal{C}^1_{\loc}(\R)$, where $\bar v= \bar r \exp\{ i \bar \phi\}$ is a solution of the Kepler' problem for $s \in (-\infty,s^-) \cap (s^+,+\infty)$ (for some $s^\pm \in \R$), has energy $0$ and constant angular momentum, equal to $\sqrt{2m/(\Omega^2 \alpha)}$. Let $\bar \phi^\pm :=  \bar \phi(s^\pm)$. As in the previous section, it is not restrictive to assume $\bar \phi^+-\bar \phi^-  \ge0$, and to deduce that
\begin{equation}\label{estimate angle from below}
\bar \phi(+\infty)-\bar \phi(-\infty) = \frac{2\pi}{2-\alpha} + \bar \phi^+-\bar \phi^- \ge \frac{2\pi}{2-\alpha},
\end{equation}
On the other hand, we know that the total variation of $\phi_n$ is smaller than $2\pi$ for every $n$: indeed, this is a consequence of Lemma \ref{variazione dell'angolo limitata} and of the definition of $v_n$. So, by the $\mathcal{C}^1_{\loc}(\R)$ convergence $v_n \to v$, we deduce also that
\begin{equation}\label{estimate angle from above}
\bar \phi(+\infty)-\bar \phi(-\infty) \le 2\pi.
\end{equation}
In case $\alpha \in (1,2)$, the estimates \eqref{estimate angle from below} and \eqref{estimate angle from above} give a contradiction, and this completes the proof of point ($i$) of Proposition \ref{prop: core prop in 2D}. Otherwise, we deduce the following. 
\begin{lemma}\label{lem: angolo se alpha = 1}
Let $\alpha=1$. If we are not in case ($ii$)-($a$) of Proposition \ref{prop: core prop in 2D}, then necessarily
\[
|\bar \phi^+-\bar \phi^-|= 0.
\]
\end{lemma}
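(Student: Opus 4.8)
The plan is to read off the identity directly by comparing the two angle estimates \eqref{estimate angle from below} and \eqref{estimate angle from above}, already established for the blow-up limit $\bar v$, in the special value $\alpha = 1$. Being in the hypothesis of the lemma means that we are \emph{not} in case (ii)-(a) of Proposition \ref{prop: core prop in 2D}; hence the obstruction sequences $(\eps_n)$, $(\bar\eps_n)$ and $(u_n)$ exist, and the whole blow-up construction producing $\bar v = \bar r \exp\{i\bar\phi\}$ applies, together with its properties (zero energy, constant nonzero angular momentum $\sqrt{2m/(\Omega^2\alpha)}$ even on the obstacle) and the bounds \eqref{estimate angle from below}, \eqref{estimate angle from above}.

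First I would specialize \eqref{estimate angle from below} to $\alpha = 1$. Since $2\pi/(2-\alpha) = 2\pi$ in this case, it reads
\[
\bar\phi(+\infty) - \bar\phi(-\infty) = 2\pi + \left(\bar\phi^+ - \bar\phi^-\right).
\]
Combining this with \eqref{estimate angle from above}, which states $\bar\phi(+\infty) - \bar\phi(-\infty) \le 2\pi$, I immediately obtain $\bar\phi^+ - \bar\phi^- \le 0$. On the other hand, exactly as in Subsection \ref{sub: core 3d}, it was observed to be not restrictive to assume $\bar\phi^+ - \bar\phi^- \ge 0$. The two inequalities together force $\bar\phi^+ - \bar\phi^- = 0$, that is, $|\bar\phi^+ - \bar\phi^-| = 0$, which is the claim.

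There is essentially no obstacle in this particular deduction: the only genuine content is the exact cancellation $2\pi/(2-\alpha) = 2\pi$ occurring precisely at $\alpha = 1$. This is why the argument that rules out collisions when $\alpha \in (1,2)$ (where \eqref{estimate angle from below} and \eqref{estimate angle from above} are strictly incompatible) degenerates, in the borderline case, into the equality $\bar\phi^+ = \bar\phi^-$ rather than an outright contradiction. The real work sits upstream, in establishing the lower bound \eqref{estimate angle from below} (via the parabolic $\alpha$-Kepler asymptotics and the explicit integral $\int_0^1 d\xi/\sqrt{\xi^\alpha-\xi^2}$) and the upper bound \eqref{estimate angle from above} (via Lemma \ref{variazione dell'angolo limitata} and the $\mathcal{C}^1_{\loc}$ convergence $v_n \to \bar v$), both of which I may invoke as already proved.

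Finally, I would flag that the vanishing of $\bar\phi^+ - \bar\phi^-$ is the decisive piece of information for what follows: since the angular momentum of $\bar v$ does not vanish on the obstacle, $\bar\phi$ is strictly monotone on $[s^-,s^+]$, so $\bar\phi^+ = \bar\phi^-$ pins the limiting obstacle interval to a point and feeds into the reconstruction of the reflection (collision-ejection) structure asserted in Proposition \ref{prop: core prop in 2D}(ii)-(b). That subsequent conversion, not the present short comparison, is where the substantive effort for the case $\alpha = 1$ will lie.
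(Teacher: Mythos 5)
Your proof is correct and is exactly the argument the paper intends: the lemma is stated immediately after \eqref{estimate angle from below} and \eqref{estimate angle from above} precisely because, for $\alpha=1$, the lower bound becomes $2\pi+(\bar\phi^+-\bar\phi^-)$ while the upper bound is $2\pi$, and together with the normalization $\bar\phi^+-\bar\phi^-\ge 0$ this forces equality. No difference in approach; the substantive work is, as you note, upstream in establishing the two estimates.
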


\subsection{A Levi-Civita regularization in a variational framework}

We employ the well known Levi-Civita transformation (see \cite{LeCi}) in order to regularize the flow in a neighbourhood of the singularity $c_k$. We explicitly remark that, to make a local argument of this kind useful, it is essential to know that the collisions are isolated.

\begin{definition}
\textbf{(Local Levi-Civita transform).} For every complex-valued continuous function $u$, we define the set $\Lambda(u)$ of the continuous function $w$ such that
\[
u(t)=w^2(\tau(t)),
\]
where we re-parametrize the time as
\[
dt=|w(\tau)|^2\,d\tau.
\]
\end{definition}
The symbols $`` \, ' \, "$ and $``\n_w "$ denote the differentiation with respect to $\tau$ and the gradient in the Levi-Civita space, respectively. If a path $u$ does not collide in $0$, then $\Lambda(u)$ consists in two elements $\pm \sqrt{u(t(\tau))}$. We perform the Levi-Civita transform for the sequence $(u_n)$ previously introduced. So, it is convenient to define
\[
S_n:= \int_c^d \frac{dt}{|u_n(t)|}.
\]
\begin{lemma}\label{su S_n}
The sequence $(S_n)$ is bounded above and bounded below by strictly positive constants. Hence, there exist a subsequence (still denoted $(S_n)$) and a positive $\wt{S}>0$ such that
\[
\lim_{n \to \infty} S_n=\wt{S}.
\]
\end{lemma}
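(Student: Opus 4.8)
The plan is to establish two $n$-independent bounds $0<c_*\le S_n\le C_*$; the existence of a convergent subsequence with a strictly positive limit $\wt S$ then follows at once from the Bolzano--Weierstrass theorem. Throughout I write $\rho_n:=|u_n|$ and $I_n:=\rho_n^2$, and I use that $\o_n\to\Omega>0$ (Lemma \ref{lem: conservation energy q_n}), that the image of $u_n$ is confined to $\overline{\Xi}$, that $u_n\in\mathcal C^1$ (Lemma \ref{minimum C^1}), that the energy relation gives $|\dot u_n|^2=\frac{2}{\o_n^2}\big(\frac{m}{\rho_n}+V_0(u_n)+h\big)$ (recall $\a=1$ here), and that $I_n$ is strictly convex on each connected component of $[c,d]\setminus[t_n^-,t_n^+]$ (Lemma \ref{proprietà di u_n}). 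The lower bound is immediate: since $u_n([c,d])\subset\overline\Xi$ there is $R>0$ (e.g.\ $R=d_k$) with $\rho_n(t)\le R$, whence $S_n\ge (d-c)/R=:c_*>0$.

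For the upper bound I would fix a small $\delta>0$ and split $[c,d]$ according to whether $\rho_n>\delta$ or $\rho_n\le\delta$. On $\{\rho_n>\delta\}$ one has trivially $\int dt/\rho_n\le(d-c)/\delta$. The real work, and the main obstacle, is the uniform control of $\int_{\{\rho_n\le\delta\}}dt/\rho_n$: there $\rho_n$ descends all the way to $\eps_n\to0$, so the naive bound $(d-c)/\eps_n$ diverges. By the convexity of $I_n$ together with the fact that $\rho_n$ attains its minimum $\eps_n$ exactly on $[t_n^-,t_n^+]$, the sublevel set $\{\rho_n\le\delta\}$ is a single interval $[a_n,b_n]$, on which $\rho_n$ decreases from $\delta$ to $\eps_n$, stays equal to $\eps_n$ on $[t_n^-,t_n^+]$, and increases back to $\delta$. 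On $[t_n^-,t_n^+]$ the contribution is $(t_n^+-t_n^-)/\eps_n=O(\eps_n^{(\a+2)/2-1})=O(\eps_n^{1/2})\to0$ by Lemma \ref{prop 2.12}. On the descending branch $[a_n,t_n^-]$ I would invoke the modified Lagrange--Jacobi identity from the proof of Lemma \ref{collisions isolated}: its dominant Keplerian term yields, for $\delta$ small enough and uniformly in $n$ (using $\o_n\le\o_{\max}$ and the boundedness of $V_0,\partial_\rho V_0,h$),
\[
\ddot I_n \;\ge\; \frac{2m}{\o_n^2\,\rho_n}-C_2 \;\ge\; \frac{A}{\rho_n}\;=\;\frac{A}{\sqrt{I_n}}\qquad\text{on }\{\rho_n\le\delta\},\quad A>0.
\]

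Since $u_n\in\mathcal C^1$ and $\rho_n$ has an interior minimum at $t_n^-$, one has $\dot I_n(t_n^-)=0$; integrating the differential inequality $I_n''\ge A/\sqrt{I_n}$ once in the variable $\sigma:=t_n^--t$ gives $\big(\tfrac{d}{d\sigma}I_n\big)^2\ge 4A(\sqrt{I_n}-\eps_n)$, i.e.\ $\frac{d\rho_n}{d\sigma}\ge\sqrt A\,\frac{\sqrt{\rho_n-\eps_n}}{\rho_n}$. Using $\rho_n$ as integration variable (legitimate since $\rho_n$ is strictly monotone on $[a_n,t_n^-]$) this becomes
\[
\int_{a_n}^{t_n^-}\frac{dt}{\rho_n}\;\le\;\frac{1}{\sqrt A}\int_{\eps_n}^{\delta}\frac{d\rho}{\sqrt{\rho-\eps_n}}\;=\;\frac{2}{\sqrt A}\sqrt{\delta-\eps_n}\;\le\;\frac{2\sqrt\delta}{\sqrt A},
\]
a bound independent of $n$; the ascending branch $[t_n^+,b_n]$ is handled identically. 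Collecting the three pieces gives $S_n\le (d-c)/\delta+4\sqrt\delta/\sqrt A+o(1)=:C_*$, uniformly in $n$. The step I expect to require the most care is precisely this near-collision estimate: the super-linear blow-up $\ddot I_n\gtrsim 1/\sqrt{I_n}$ forces $\rho_n$ to grow like the Keplerian collision rate $\sigma^{2/3}$ away from the obstacle, which is exactly what makes $1/\rho_n$ integrable with an $n$-independent constant — note that a merely positive lower bound $\ddot I_n\ge\kappa_0$ would only produce a divergent $\log(1/\eps_n)$. With both bounds in hand, every subsequential limit $\wt S$ of $(S_n)$ satisfies $\wt S\ge c_*>0$, which is the assertion of the lemma.
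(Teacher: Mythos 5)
Your proof is correct, but it takes a genuinely different and considerably heavier route than the paper's. For the upper bound the paper argues softly, by contradiction: since $\alpha=1$ in this section, $V(u_n)\ge m/|u_n|-C$ on $\overline{\Xi}$, hence $\int_c^d\left(V(u_n)+h\right)\ge m\,S_n-C'$; combined with the uniform lower bound $\|\dot u_n\|_{L^2}\ge C_3>0$ (obtained earlier, in the proof of Lemma \ref{lem: conservation energy q_n}), an unbounded $(S_n)$ would force $\mathcal M_h(u_n)\to+\infty$, contradicting $\mathcal M_h(u_n)=d(\eps_n)\to d(0)<+\infty$. In other words, the boundedness of the Maupertuis functional along the minimizing sequence already encodes the uniform integrability of $1/|u_n|$, with no need to resolve the near-collision behaviour. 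Your argument instead establishes a hard quantitative estimate: the Keplerian rate $\rho_n(t_n^--\sigma)\ge c\,\sigma^{2/3}$ away from the obstacle, extracted from the Lagrange--Jacobi inequality $\ddot I_n\ge A/\sqrt{I_n}$ and then integrated in the radial variable. All the ingredients you invoke (strict convexity of $I_n$ off the obstacle, $\dot I_n(t_n^\pm)=0$ from the $\mathcal C^1$ regularity of Lemma \ref{minimum C^1}, the estimate $t_n^+-t_n^-=O(\eps_n^{3/2})$ of Lemma \ref{prop 2.12}, and the uniform two-sided bounds on $\o_n$) are indeed available at this point of the paper, and the computation is sound; your route also has the merit of working for every $\a\in(0,2)$ and of not using minimality beyond the Euler--Lagrange equation, whereas the paper's one-line argument exploits $\a=1$ (or at least $\a\ge1$) to compare $\int|u_n|^{-\a}$ with $S_n$. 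The price is length: you re-derive quantitatively what minimality gives for free. The lower bound $S_n\ge(d-c)/d_k$ is the same in both arguments.
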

\begin{proof}
Assume by contradiction that $(S_n)$ is not bounded above. In the proof of point ($iii$) of Lemma \ref{proprietà  di u_n}, we showed that $\|\dot{u}_n\|_2 \ge C_3$ for every $n$; since $\int_0^1 V(u_n) \ge S_n$, $(\mathcal{M}_h(u_n))$ is unbounded, in contradiction with the fact that $(u_n)$ is a minimizing sequence. 

Now, recalling that in $\Xi$ it results $|u| \le d_k$, it is easy to check that the sequence $(S_n)$ is also bounded below by a positive constant.
\end{proof}

We define the sets $\Lambda(u_n)$ and $\Lambda(\bar u)$ of the continuous functions $w_n$ and $\bar w$ such that
\[
\begin{split}
&u_n(t)=w_n^2(\tau(t)), \quad \text{where} \quad dt=S_n |w_n(\tau)|^2\,d\tau, \\
&\bar u(t)=\bar w^2(\tau(t)), \quad \text{where} \quad dt=\bar{S} |\bar w(\tau)|^2\,d\tau.
\end{split}
\]

\begin{remark}
We point out that the new time $\tau$ depends on $n$ (we keep in mind this dependence, but we do not write it down to simplify the notation). The time parameters are suitably normalized to work in a common time interval: setting $\tau(c)=0$ for every $n$, the right end of the interval of definition of each function $w_n$ is
\[
\int_0^{\tau(d)} d\tau = \frac{1}{S_n}\int_c^d \frac{dt}{|u_n(t)|}=1,
\]
so that $w_n$ is defined over $[0,1]$. 
\end{remark}

For $w_n \in \Lambda(u_n)$, we set $\tau_n^-:=\tau(t_n^-)$ and $\tau_n^+:=\tau(t_n^+)$. We recall that $t_n^- = \inf \{t \in [c,d]: |u_n(t)|=\eps_n\}$, $t_n^+=\sup \{t \in[c,d]: |u_n(t)|=\eps_n \}$. The constraint $B_{\eps_n}(0)$ corresponds, through the Levi-Civita transformation, to the ball $B_{\sqrt{\eps_n}}(0)$. Hence, $w_n$ satisfies
\begin{align*}
& |w_n(\tau)|>\sqrt{\r_n} \qquad \tau \in [0,\tau_n^-) \cup (\tau_n^+,1]\\
& |w_n(\tau)|=\sqrt{\r_n} \qquad \tau \in [\tau_n^-, \tau_n^+].
\end{align*}
In polar coordinates, we write $w_n(\tau)=\kappa_n(\tau) \exp\{i \sigma_n(\tau)\}$, where $\kappa_n:[0,1] \to \R^+$ and $\sigma_n:[0,1] \to \R$.

\medskip

For every $\eps_1,\eps_2>0$, and any $u \in \mathcal{K}_{\eps_1 \eps_2}$, it results $\Lambda(u)=\{ \pm \sqrt{u} \}$; thus, the map
\[
\Lambda_+(u): u \mapsto \Lambda_+(u): =+\sqrt{u}
\]
is a bijective correspondence between the spaces $(\mathcal{K}_{\eps_1 \eps_2},dt)$ and $(\Lambda_+(\mathcal{K}_{\eps_1 \eps_2}),d\tau)$.

From now on, for every $n$ we set $w_n =\sqrt{u_n}$. Since $u_n \to \bar u$ uniformly in $[c,d]$, it is possible to choose $\bar u \in \Lambda(\bar  u)$ such that $w_n \to \bar w$ uniformly in $[0,1]$. The next lemma establishes the relationship between the variational properties of $u_n$ and $w_n$.

\begin{lemma}\label{nuovo Maupertuis}
The function $w_n$ is a minimizer of the functional
\[
\tilde{\mathcal M}(w):= 4 \int_0^1 |w'(\tau)|^2\,d\tau \int_0^1 \left[m+ \left(V_0\left(w^2(\tau)\right)+h\right)|w(\tau)|^2\right]\,d\tau
\] 
in the set $\Lambda_+(\mathcal{K}_{\eps_1,\eps_2})$ at a strictly positive level.
\end{lemma}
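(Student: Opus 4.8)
The plan is to reduce the statement to the minimality of $u_n$ already established in Subsection~\ref{sub: core 3d}, by checking that the Levi-Civita transform turns the Maupertuis' functional $\mathcal{M}_h$ into a constant multiple of $\tilde{\mathcal M}$. Recall that $\Lambda_+$ is a bijection between $\mathcal{K}_{\eps_1,\eps_2}$ and $\Lambda_+(\mathcal{K}_{\eps_1,\eps_2})$; so, once I show that $\tilde{\mathcal M}\circ\Lambda_+=2\,\mathcal{M}_h$ on $\mathcal{K}_{\eps_1,\eps_2}$, the two minimization problems become equivalent, and the minimality of $u_n$ in $\mathcal{K}_{\eps_n,\bar\eps_n}$ will immediately give that $w_n=\Lambda_+(u_n)$ minimizes $\tilde{\mathcal M}$ in the image.

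First I would perform the change of variables for an arbitrary $u\in\mathcal{K}_{\eps_1,\eps_2}$, writing $w=\sqrt u$, $S=S(u)=\int_c^d dt/|u|$, and reparametrizing so that $w$ is defined on $[0,1]$. From $u=w^2$ and $dt=S|w|^2\,d\tau$ one gets $\dot u=2ww'/(S|w|^2)$, hence $|\dot u|^2=4|w'|^2/(S^2|w|^2)$ and
\[
\int_c^d|\dot u|^2\,dt=\frac{4}{S}\int_0^1|w'|^2\,d\tau.
\]
Here is where the hypothesis $\alpha=1$ is essential: since $|u|=|w|^2$, in the Jacobian the singular term $m/|u|$ gets multiplied by $|w|^2$ and becomes the constant $m$, so that
\[
\int_c^d\bigl(V(u)+h\bigr)\,dt=S\int_0^1\bigl[m+(V_0(w^2)+h)|w|^2\bigr]\,d\tau.
\]
Taking the product of the two integrals, the reparametrization constant $S$ cancels, and one finds $\mathcal{M}_h(u)=2\int_0^1|w'|^2\,d\tau\int_0^1\bigl[m+(V_0(w^2)+h)|w|^2\bigr]\,d\tau=\frac{1}{2}\tilde{\mathcal M}(w)$.

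The point I most want to stress --- and the only step requiring genuine care --- is that this identity holds with the \emph{same} multiplicative constant for every element of the class, even though the reparametrization constant $S=S(u)$ depends on $u$: the cancellation of $S$ between the kinetic and the potential factors is precisely what makes $\tilde{\mathcal M}$ a bona fide functional on $\Lambda_+(\mathcal{K}_{\eps_1,\eps_2})$ for which minimizers correspond to minimizers of $\mathcal{M}_h$. I would also record the routine compatibility facts: on $\mathcal{K}_{\eps_1,\eps_2}$ one has $|u|\ge\eps_1>0$, so $w=+\sqrt u$ is single-valued and of class $H^1$, with the endpoints preserved, which is exactly the bijection recalled above.

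Finally, strict positivity of the level follows at once: since $\tilde{\mathcal M}(w_n)=2\,\mathcal{M}_h(u_n)=2\,d(\eps_n)$ and Lemma~\ref{lem: proprieta' di M_h} gives $\mathcal{M}_h\ge C>0$ on $\mathcal{K}$, we obtain $\tilde{\mathcal M}(w_n)\ge 2C>0$. Equivalently, the second factor of $\tilde{\mathcal M}$ equals $\int_0^1|w|^2\bigl(V(u)+h\bigr)\,d\tau>0$ on the Hill region, while the first factor is positive because $w_n$ is not constant.
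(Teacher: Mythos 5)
Your proposal is correct and follows essentially the same route as the paper: the paper's proof likewise writes the two factors of $\mathcal{M}_h$ in the new time $\tau$, obtaining $\int_c^d|\dot u_n|^2\,dt=\tfrac{4}{S_n}\int_0^1|w_n'|^2\,d\tau$ and $\int_c^d(V(u_n)+h)\,dt=S_n\int_0^1\bigl[m+(V_0(w_n^2)+h)|w_n|^2\bigr]\,d\tau$, so that the $S_n$ cancels in the product and minimality transfers through the bijection $\Lambda_+$. Your explicit justification of the strictly positive level is a small addition the paper leaves implicit, but nothing of substance differs.
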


\begin{proof}
Since $(\mathcal{K}_{\eps_1, \eps_2},dt)$ and $(\Lambda_+(\mathcal{K}_{\eps_1, \eps_2}),d\tau)$ are in bijective correspondence, it is sufficient to write the factors of $\mathcal M_h$ in terms of $\tau$ and $w_n$:
\begin{align*}
|\dot{u}_n(t)|^2\,dt &=\left|2 w_n(\tau(t)) w_n'(\tau(t)) \frac{d\tau}{dt}(t)\right|^2\,dt  = \frac{4}{S_n}|w_n'(\tau)|^2\,d\tau,\\
\left(V(u_n(t))+h\right)\,dt &=\left( \frac{m}{|w_n(\tau(t))|^2}+ V_0\left(w_n^2(\tau(t))\right)+h\right)\, dt \\
&= S_n\left[m+\left(V_0\left(w_n^2(\tau)\right)+h\right)|w_n(\tau)|^2\right]\,d\tau. \qedhere
\end{align*}
\end{proof}

\begin{remark}
The functional $\tilde{\mathcal M}_h$ is another Maupertuis' functional, with regular potential
\[
\tilde V(w):= \left(V_0\left(w^2(\tau)\right)+h\right)|w(\tau)|^2.
\]
Its free critical points, suitably re-parametrized, are solutions of $\ddot w= \nabla_w \tilde V(w)$ with energy $m$.
\end{remark}

For every $n$, let
\[
\wt{\o}_n^2:=\frac{\int_0^1 \left[m+ \left(V_0(w_n^2)+h\right)|w_n|^2\right]}{\frac{1}{2}\int_0^1 |w_n'|^2}.
\]

\begin{lemma}
The sequence $(\wt{\o}_n^2)$ is bounded above and bounded below by positive constants. Hence, there exist a subsequence (still denoted $(\wt{\o}_n)$) and $\wt{\Omega}>0$ such that
\[
\lim_{n \to \infty} \wt{\o}_n=\wt{\Omega}.
\]
\end{lemma}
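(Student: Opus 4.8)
The plan is to mirror the proof of Lemma~\ref{lem: conservation energy q_n}(i), transporting the argument through the Levi--Civita change of variables. First I would rewrite $\wt{\o}_n^2$ in terms of the regularized Maupertuis' functional $\tilde{\mathcal M}$. Since the numerator of $\wt{\o}_n^2$ equals $\tilde{\mathcal M}(w_n)\big/\bigl(4\int_0^1|w_n'|^2\bigr)$ by the very definition of $\tilde{\mathcal M}$, unwinding the quotient exactly as in~\eqref{20} gives
\[
\wt{\o}_n^2=\frac{\tilde{\mathcal M}(w_n)}{2\left(\int_0^1|w_n'|^2\,d\tau\right)^2}.
\]
It then suffices to bound both $\tilde{\mathcal M}(w_n)$ and $\int_0^1|w_n'|^2$ from above and below by strictly positive constants.

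For the numerator I would reuse the computation carried out in the proof of Lemma~\ref{nuovo Maupertuis}: substituting $|\dot u_n|^2\,dt=(4/S_n)|w_n'|^2\,d\tau$ and $\bigl(V(u_n)+h\bigr)\,dt=S_n\bigl[m+(V_0(w_n^2)+h)|w_n|^2\bigr]\,d\tau$ into $\mathcal M_h(u_n)$, the factors $S_n$ cancel and one obtains the identity $\tilde{\mathcal M}(w_n)=2\,\mathcal M_h(u_n)$. Because $(u_n)$ is a minimizing sequence with $\mathcal M_h(u_n)\to d(0)$ and $d(0)>0$, the value $\mathcal M_h(u_n)$, and hence $\tilde{\mathcal M}(w_n)$, is bounded above and below by positive constants for $n$ large.

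For the denominator the same change of variables yields $\int_0^1|w_n'|^2\,d\tau=\tfrac{S_n}{4}\int_c^d|\dot u_n|^2\,dt$. Here I would invoke Lemma~\ref{su S_n}, which furnishes two-sided positive bounds on $S_n$, together with the fact already established in the proofs of Lemma~\ref{lem: conservation energy q_n} and Lemma~\ref{proprietà di u_n} that $\|\dot u_n\|_2$ is bounded below by a positive constant (each $u_n$ must travel the fixed distance from $\bar p_1$ to the obstacle) and above (being a minimizing sequence of a coercive functional, $(u_n)$ is bounded in $H^1$). Combining these squeezes $\int_0^1|w_n'|^2$ between positive constants, and substituting both bounds into the displayed identity gives $0<c\le\wt{\o}_n^2\le C$ for all large $n$. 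Bolzano--Weierstrass then extracts a subsequence with $\wt{\o}_n\to\wt{\Omega}$ for some $\wt{\Omega}>0$.

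The only mildly delicate point is the lower bound for the denominator, which is precisely what secures the upper bound on $\wt{\o}_n^2$: one must exclude the degeneration $\int_0^1|w_n'|^2\to 0$. This is not a genuine obstacle, since it is controlled by the lower bounds on $S_n$ and on $\|\dot u_n\|_2$, both of which are already at our disposal.
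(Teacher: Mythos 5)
Your proof is correct and follows essentially the same route as the paper: the paper's (one-line) proof likewise combines the change-of-variable computations of Lemma \ref{nuovo Maupertuis} with the two-sided bounds on $(S_n)$ and on $(\o_n^2)$, the latter being exactly your separate bounds on $\mathcal M_h(u_n)=\tfrac12\tilde{\mathcal M}(w_n)$ and on $\|\dot u_n\|_2$ repackaged via \eqref{20}. Your version merely makes explicit the identity $\wt{\o}_n^2=4\o_n^2/S_n^2$ in unwound form, and correctly flags the lower bound on $\int_0^1|w_n'|^2$ as the point to check.
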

\begin{proof}
It is sufficient to use the computations of Lemma \ref{nuovo Maupertuis}, and recall that the sequences $(\omega_n^2)$ and $(S_n^2)$ are bounded from below and from above by positive constants.
\end{proof}

We can prove the counterpart of Lemma \ref{proprietà di u_n} for the sequence $(w_n)$.

\begin{lemma}\label{lem: prop di w_n}
For every $n$, the function $w_n$ has the following properties:
\begin{itemize}
\item[($i$)] it is of class $\mathcal{C}^1\left((0,1)\right)$;
\item[($ii$)] the restrictions $w_n|_{[0,\tau_n^-)}$ and $w_n|_{(\tau_n^+,1]}$ are $\mathcal{C}^2$ solutions of
\[
\wt{\o}_n^2 w_n''(\tau)= \n_{q_n} \left( \left(V_0\left(w_n^2(\tau)\right)+h\right)|w_n(\tau)|^2\right)-2w_n(\tau);
\]
\item[($i$)] the energy of $w_n$ is constant in $[0,1]$:
\[
\frac{1}{2}|q_n'(\tau)|^2-\frac{1}{\wt{\o}_n^2}\left(V_0\left(q_n^2(\tau)\right)+h\right)|w_n(\tau)|^2=\frac{m}{\wt{\o}_n^2} \qquad \forall \tau \in [0,1];
\]
\item[($iv$)] the variation of the angle on the constraint tends to $0$ as $n \to \infty$:
\[
\lim_{n \to \infty} |\sigma_n(\tau_n^+)-\sigma_n(\tau_n^-)|=0;
\]
\item[($v$)] The amplitude of the time interval employed by $w_n$ on the constraint tends to $0$ as $n \to \infty$:
\[
\lim_{n \to \infty} (\tau_n^+ - \tau_n^-) = 0.
\]
\end{itemize}
\end{lemma}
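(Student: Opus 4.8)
The plan is to read each assertion as the Levi--Civita counterpart, in the variables $(w_n,\tau)$, of a property of $(u_n,t)$ already proved in Subsection \ref{sub: core 3d}, transported through the bijection of Lemma \ref{nuovo Maupertuis}. Throughout, recall that $w_n=\sqrt{u_n}$ with $\frac{dt}{d\tau}=S_n|w_n|^2$, that $u_n$ stays at distance $\ge\eps_n$ from the centre, and that on the obstacle $|u_n|\equiv\eps_n$, equivalently $|w_n|\equiv\sqrt{\eps_n}$.

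For points ($i$)--($iii$) I would argue exactly as in Lemmas \ref{minimum C^1}, \ref{proprietà di u_n} and \ref{lem: conservation energy q_n}, now in the regularized variables. Since $u_n\in\mathcal{C}^1((c,d))$ and $|u_n|\ge\eps_n>0$, both the complex square root $u\mapsto\sqrt{u}$ and the reparametrization $\tau\mapsto t$ (governed by $\frac{dt}{d\tau}=S_n|w_n|^2$, whose right-hand side is continuous and bounded below by $S_n\eps_n>0$) are of class $\mathcal{C}^1$, so $w_n=\sqrt{u_n(t(\tau))}\in\mathcal{C}^1((0,1))$, which is ($i$). For ($ii$), since $w_n$ minimizes $\tilde{\mathcal M}$ in $\Lambda_+(\mathcal{K}_{\eps_1,\eps_2})$ by Lemma \ref{nuovo Maupertuis}, off the constraint it is a free critical point; computing the first variation of $\tilde{\mathcal M}$ (equivalently, inserting $u_n=w_n^2$ into $\o_n^2\ddot{u}_n=\n V(u_n)$ and using the energy relation to cancel the singular term $m/|u_n|$) produces the stated second-order equation, whose right-hand side is regular because $V_0\in\mathcal{C}^1(\overline{\Xi})$; the usual bootstrap then upgrades $w_n$ to $\mathcal{C}^2$ off the obstacle. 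For ($iii$), the energy is conserved by extremality of $w_n$ under endpoint-fixing time reparametrizations, as in Lemma \ref{lem: conservation energy q_n}($ii$); the value $m/\wt{\o}_n^2$ arises because $m$ is precisely the energy parameter of the Maupertuis functional $\tilde{\mathcal M}$.

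The remaining two points carry the real content, yet each reduces to an estimate already in hand. For ($v$), on $[\tau_n^-,\tau_n^+]$ we have $|w_n|^2\equiv\eps_n$, so the time change gives
\[
\tau_n^+-\tau_n^-=\int_{t_n^-}^{t_n^+}\frac{dt}{S_n|u_n(t)|}=\frac{t_n^+-t_n^-}{S_n\,\eps_n}.
\]
As $\alpha=1$, Lemma \ref{prop 2.12} yields $t_n^+-t_n^-=O\!\left(\eps_n^{3/2}\right)$, while $S_n\to\wt{S}>0$ by Lemma \ref{su S_n}; hence $\tau_n^+-\tau_n^-=O\!\left(\eps_n^{1/2}\right)\to 0$. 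For ($iv$), the decisive observation is that $u=w^2$ doubles angles, $\theta_n=2\sigma_n$, so
\[
\sigma_n(\tau_n^+)-\sigma_n(\tau_n^-)=\tfrac12\bigl(\theta_n(t_n^+)-\theta_n(t_n^-)\bigr)=\tfrac12\bigl(\phi_n(s_n^+)-\phi_n(s_n^-)\bigr),
\]
with $\phi_n$ and $s_n^\pm$ the angular component and the obstacle endpoints of the blow-up of Subsection \ref{sub: core 3d}. From $v_n\to\bar v$ in $\mathcal{C}^1_{\loc}(\R)$ and $s_n^\pm\to s^\pm$ one obtains $\phi_n(s_n^\pm)\to\bar\phi^\pm$, and Lemma \ref{lem: angolo se alpha = 1} gives $|\bar\phi^+-\bar\phi^-|=0$; therefore $\sigma_n(\tau_n^+)-\sigma_n(\tau_n^-)\to 0$.

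The main obstacle is ($iv$): it is the only step where the hypothesis $\alpha=1$ is indispensable, and it rests on the full blow-up machinery through Lemma \ref{lem: angolo se alpha = 1}, namely the vanishing of the angular gap $\bar\phi^+-\bar\phi^-$. A secondary subtlety is the behaviour at the junction instants $\tau_n^\pm$, where $w_n$ is only $\mathcal{C}^1$: the equation in ($ii$) must be read on the open intervals $[0,\tau_n^-)$ and $(\tau_n^+,1]$ alone, and the passage $\phi_n(s_n^\pm)\to\bar\phi^\pm$ has to be justified from $s_n^\pm\to s^\pm$ combined with the local-uniform convergence of the angular components, rather than from values at fixed points.
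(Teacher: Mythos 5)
Your proposal is correct and follows essentially the same route as the paper: points ($i$)--($iii$) from the variational characterization in Lemma \ref{nuovo Maupertuis}, point ($iv$) via the angle-halving identity $\sigma_n=\theta_n/2$ combined with the blow-up convergence and Lemma \ref{lem: angolo se alpha = 1}, and point ($v$) from the time-change formula $\tau_n^+-\tau_n^-=(t_n^+-t_n^-)/(S_n\eps_n)$ together with Lemmas \ref{prop 2.12} and \ref{su S_n}. The extra care you take at the junction instants $\tau_n^\pm$ and in justifying $\phi_n(s_n^\pm)\to\bar\phi^\pm$ only makes explicit what the paper leaves implicit.
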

\begin{proof}
The point ($i$) is obvious, the points ($ii$) and ($iii$) are consequence of the variational characterization of $w_n$, Lemma \ref{nuovo Maupertuis}.\\
($iv$) By definition, $w_n=\kappa_n \exp\{i \sigma_n\}  = \sqrt{\rho_n} \exp\{ i \t_n/2\} = \sqrt{u_n}$; recalling that $\t_n$ is strictly monotone (see point ($iii$) of Lemma \ref{proprietà di u_n}), we deduce that the variation of the angle $\sigma_n$ on the constraint is $|\sigma_n(\tau_n^+) - \sigma_n(\tau_n^-)| = \frac12 |\t_n(t_n^+)-\t_n(t_n^-)|/2$. Recalling the definition of the blow up sequence $v_n$, and in particular the \eqref{intro di v_n}, we observe that
\[
|\t_n(t_n^+)-\t_n(t_n^-)| =  |\phi_n(s_n^+)-\phi_n(s_n^-)|;
\]
hence, passing to the limit as $n \to \infty$, we can use Lemma \ref{lem: angolo se alpha = 1} to deduce
\[
\lim_{n \to \infty} |\sigma_n(\tau_n^+) - \sigma_n(\tau_n^-)|= \frac{1}{2} \lim_{n \to \infty} |\t_n(t_n^+)-\t_n(t_n^-)|  = |\bar \phi^+- \bar \phi^-| = 0.
\]
($v$) It is a consequence of the same property for $u_n$, Lemma \ref{prop 2.12}:
\[
\tau_n^+ - \tau_n^- = \int_{\tau_n^-}^{\tau_n^+} d\,\tau = \int_{t_n^-}^{t_n^+} \frac{dt}{S_n|w_n(\tau(t))|^2}= \frac{t_n^+ - t_n^-}{S_n \eps_n} = \frac{O(1)\eps_n^{\frac{3}{2}}}{S_n \eps_n} \simeq \frac{\eps_n^{\frac{1}{2}}}{S_n} \to 0
\]
for $n \to \infty$, where we used the boundedness of the sequence $(S_n)$, Lemma \ref{su S_n}.
\end{proof}

The previous result, together with the uniform convergence of $w_n$, permits to obtain an equation for the limit function $\bar w$. 

\begin{lemma}
The path $\bar w$ is a classical solution of
\beq\label{equazione per q tilda}
\wt{\Omega}^2 \bar w''(\tau) = \n_{w} \left(\left(V_0\left(\bar w^2(\tau)\right)+h\right)|\bar w(\tau)|^2\right)-2\bar w(\tau) \qquad \forall \tau  \in (0,1).
\eeq
\end{lemma}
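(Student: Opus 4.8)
The plan is to pass to the limit $n\to\infty$ in the equation of Lemma~\ref{lem: prop di w_n}(ii) satisfied by $w_n$ on the free region, and to show that the single exceptional instant at which $w_n$ touches the obstacle produces no singular behaviour in the limit. First I would fix the geometry of the limit: since $\tau_n^+-\tau_n^-\to0$ by Lemma~\ref{lem: prop di w_n}(v) and $\tau_n^\pm\in[0,1]$, up to a subsequence both $\tau_n^-$ and $\tau_n^+$ converge to a common $\bar\tau\in[0,1]$, and from $w_n\to\bar w$ uniformly together with $|w_n|=\sqrt{\eps_n}$ on $[\tau_n^-,\tau_n^+]$ one gets $\bar w(\bar\tau)=0$. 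The crucial structural remark, encoding the effect of the Levi-Civita regularization, is that the field
\[
G(w):=\n_w\!\left(\big(V_0(w^2)+h\big)|w|^2\right)-2w
\]
appearing on the right of \eqref{equazione per q tilda} is continuous on the whole Levi-Civita preimage of $\overline{\Xi}$, \emph{including at $w=0$}: indeed $V_0\in\mathcal C^1(\overline{\Xi})$ gives $\n_w\big((V_0(w^2)+h)|w|^2\big)=2(V_0(w^2)+h)\,w+|w|^2\,O(1)$, which tends to $0$ together with $-2w$ as $w\to0$. Thus the only possible obstruction to the equation is the matching of $\bar w$ through the instant $\bar\tau$.

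Next I would obtain $\mathcal C^2_{\loc}$ convergence away from $\bar\tau$. The constancy of the energy of $w_n$ (Lemma~\ref{lem: prop di w_n}), together with the uniform bounds on $(\wt\o_n^2)$ and the boundedness of $V_0$ and of $|w_n|$, yields a uniform $L^\infty$ bound on $(w_n')$. Fixing any compact $[a,b]\subset(0,1)\setminus\{\bar\tau\}$, for $n$ large $[a,b]$ is disjoint from $[\tau_n^-,\tau_n^+]$, so $w_n$ solves $\wt\o_n^2 w_n''=G(w_n)$ there by Lemma~\ref{lem: prop di w_n}(ii); hence $(w_n'')$ is bounded on $[a,b]$, and by Ascoli--Arzel\`a together with a bootstrap through the equation one gets $w_n\to\bar w$ in $\mathcal C^2([a,b])$, so that $\wt\Omega^2\bar w''=G(\bar w)$ on $[a,b]$. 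A diagonal exhaustion gives this identity on all of $(0,1)\setminus\{\bar\tau\}$.

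The hard part is crossing $\bar\tau$, i.e. proving that $\bar w'$ extends continuously there so that $\bar w\in\mathcal C^1((0,1))$. The key estimate is $\big|w_n'(\tau_n^+)-w_n'(\tau_n^-)\big|\to0$. To prove it, note that on $[\tau_n^-,\tau_n^+]$ one has $w_n=\sqrt{\eps_n}\,e^{i\sigma_n}$, whence $w_n'(\tau_n^\pm)=i\sqrt{\eps_n}\,\sigma_n'(\tau_n^\pm)\,e^{i\sigma_n(\tau_n^\pm)}$; the energy identity forces $|w_n'(\tau_n^\pm)|^2=2m/\wt\o_n^2+O(\eps_n)\to 2m/\wt\Omega^2$, so the two moduli share the same positive limit, while $\sigma_n=\t_n/2$ is monotone (a fact already used in the proof of Lemma~\ref{lem: prop di w_n}(iv)), so $\sigma_n'(\tau_n^-)$ and $\sigma_n'(\tau_n^+)$ have the same sign; finally the phases satisfy $|\sigma_n(\tau_n^+)-\sigma_n(\tau_n^-)|\to0$ by Lemma~\ref{lem: prop di w_n}(iv). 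Splitting the difference into a modulus part and a phase part then gives the claim. To transfer this to one-sided limits of $\bar w'$, I would compare $w_n'(\bar\tau\pm\delta)$ with $w_n'(\tau_n^\pm)$: both points lie in the free region for $n$ large, where $|w_n''|\le C$, so $|w_n'(\bar\tau\pm\delta)-w_n'(\tau_n^\pm)|\le C(\delta+o(1))$; letting $n\to\infty$ (using the $\mathcal C^1_{\loc}$ convergence and, up to a subsequence, $w_n'(\tau_n^\pm)\to A^\pm$) and then $\delta\to0$ shows $\lim_{\tau\to\bar\tau^\pm}\bar w'(\tau)=A^\pm$ with $A^+=A^-$.

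Finally I would conclude as follows: $\bar w'$ is continuous across $\bar\tau$, hence $\bar w\in\mathcal C^1((0,1))$, and since $G(\bar w)$ is continuous on $(0,1)$ by the structural remark above (even at $\bar\tau$, where $\bar w=0$), the identity $\wt\Omega^2\bar w'(\tau)=\wt\Omega^2\bar w'(\tau_0)+\int_{\tau_0}^\tau G(\bar w)\,ds$ upgrades $\bar w$ to $\mathcal C^2((0,1))$ and shows that \eqref{equazione per q tilda} holds for every $\tau\in(0,1)$. The genuinely delicate step is the derivative-matching estimate, where the regularizing virtue of the Levi-Civita change of variables, namely bounded and non-vanishing velocity on the obstacle, is combined with the smallness statements (iv) and (v) of Lemma~\ref{lem: prop di w_n}.
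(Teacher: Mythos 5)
Your proposal is correct and follows essentially the same route as the paper's proof: convergence of $\tau_n^{\pm}$ to a single instant where $\bar w$ vanishes, $\mathcal{C}^1_{\loc}$ convergence of $(w_n)$ away from it, matching of the one-sided limits of $\bar w'$ via the tangency of $w_n'$ to the obstacle circle $\{|w|=\sqrt{\eps_n}\}$ combined with point ($iv$) of Lemma \ref{lem: prop di w_n}, and the regularity of the transformed right-hand side at $w=0$ to pass from a weak to a classical solution. You merely spell out two details the paper leaves implicit, namely the equality of the moduli $|w_n'(\tau_n^{\pm})|$ coming from the energy identity and the constancy of the sign of $\sigma_n'$ on the obstacle.
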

\begin{proof}
The point ($v$) of the previous lemma implies that the sequences $(\tau_n^-)$ and $(\tau_n^+)$ converge to some $\tau_1 \in (0,1)$, such that $\bar w(\tau_1)=0$. This instant $\tau_1$ corresponds to the unique collision time $t_1 \in (c,d)$ of the function $\bar u$. We know that $w_n$ uniformly converges to $\bar w$ over $[0,1]$, and it is not difficult to see that $w_n \to\bar w$ in the $\mathcal{C}^1$-topology in any compact subset of $[0,\tau_1) \cup (\tau_1,1]$ (one can easily follow the proof of Proposition \ref{convergenza per v_n}). Since every $w_n$ is $\mathcal{C}^1$, the vector $w_n(\tau)$ is tangent to the circle $\{w \in \mathbb{C}: |w|=\sqrt{\eps}_n\}$ in the time interval $[\tau_n^-,\tau_n^+]$. So, using the fact that the variation of the angle $\sigma_n$ on the constraint tends to $0$ (we refer to point ($iv$) of Lemma \ref{lem: prop di w_n}), we deduce that
\[
\lim_{\tau \to \tau_1^-}\bar w'(\tau)=\lim_{\tau \to \tau_1^+}  \bar w'(\tau),
\]
that is, $\bar w$ passes trough the origin without any change of direction. As a consequence $\bar w \in \mathcal{C}^1((0,1))$, and it turns out to be a (weak, and by regularity strong) solution of \eqref{equazione per q tilda}.
\end{proof}

\begin{proof}[Conclusion of the proof of point ($ii$) of Proposition \ref{prop: core prop in 2D}]
We wish to show that if we are not in case ($ii$)-($a$), then $\bar u(t_1+t)= \bar u(t_1-t)$. Let us consider the functions
\[
\bar w_1(\tau)=\bar w(\tau_1+\tau), \qquad \bar{w}_2(\tau)= -\bar{w}(\tau_1-\tau).
\]
They are both solutions of \eqref{equazione per q tilda}: for $\bar w_1$ this is immediate, for $\bar w_2$ it is not difficult to check, observing that $w \mapsto (V_0(w^2)+h)|w|^2$ is an even function, and as a consequence 
$w \mapsto\nabla_w [(V_0(w^2)+h)|w|^2]$ is odd. Thanks to the regularity of \eqref{equazione per q tilda}, the uniqueness theorem for initial value problems implies that
\[
\bar w(\tau_1+\tau)=-\bar w(\tau_1-\tau) ,
\]
that is, recalling the definition of the Levi-Civita transform, $\bar u(t_1+t)= \bar u(t_1-t)$.
\end{proof}

\begin{remark}
We proved that, if the minimum of the restriction of $\mathcal M_h$ over $K_l$ is achieved by a collision function $\bar u$, then $\bar u$ is an ejection-collision minimizer. To do this, we considered the minimizing sequence $(u_n)$, defined by means of the introduction of the obstacle problems, and then we passed to the limit in the Levi-Civita space. Thanks to the regularity of the transformed problem, we obtained an equation satisfied by the limit, and this implied the collision-ejection condition for the function $\bar u$. A natural question is the following: why did we pass to $w_n \in \Lambda(u_n)$ instead of considering directly a function in $\Lambda(\bar{u})$? The answer is that, since $|\bar u(t_1)| = 0$, the set $\Lambda(\bar u)$ has not two connected components, so that it is not so clear to give a variational characterization of an arbitrary function in $\Lambda(\bar{u})$ (and hence to deduce an equation for an element of this set). On the other hand, the fact that we fixed the choice $w_n= \sqrt{u_n}$ and the uniform convergence of $u_n$ to $\bar u$ allows to show that the sequence $(w_n)$ converges to a uniquely determined $\bar w \in \Lambda(\bar u)$.
\end{remark}

%

\section*{Appendix: Variational principles}

For the reader's convenience, we collect here some known results (without proofs) concerning the action and the Maupertuis functionals. We refer to \cite{AmCZ,FeTe,Vethesis, SoTe}, and to the references therein. This part can be skipped by the expert reader.

In what follows, $V: \R^N \setminus \Sigma \to \R$ is a singular potential which is smooth in the configuration space $\R^N \setminus \Sigma$, and such that $V(x) \to +\infty$ as $\dist(x,\Sigma) \to 0$. We consider the following fixed ends problem:
\beq\label{fixed ends problem}
\begin{cases}
\ddot{q}(t) = \n V(q(t)) & t \in (a,b),\\
q\left(a\right)=p_1, \qquad q\left(b\right)=p_2,
\end{cases}
\eeq
where the time interval $[a,b]$ can be prescribed a priori, or not. Let
\[
\wh{H}=\wh{H}_{p_1 p_2}([a,b]):= \left\{q \in H^1([a,b],\R^N \setminus \Sigma): \ q(a)=p_1 \text{ and } q(b)=t_2\right\},
\]
and let $H_{p_1 p_2}([a,b])$ be its closure with respect to the weak topology of $H^1$, that is,
\[
H=H_{p_1 p_2}([a,b]) := \left\{q \in H^1([a,b],\R^3): \ q(a)=p_1 \text{ and } q(b)=t_2\right\}.
\]

\appendix

\section{Prescribing the time-interval: the action functional}

The action functional (also called \emph{Lagrangian action}) is
\[
\mathcal{A}_{[a,b]}: H^1([a,b],\R^3) \to \R \cup \{+\infty\}, \quad
\mathcal{A}_{[a,b]}(q):= \int_a^b \left(\frac{1}{2} |\dot{q}(t)|^2 + V(q(t)) \right)\,dt.
\]
It is differentiable in $\wh{H}([a,b])$ (seen as an affine space on $H_0^1$) and its critical points are solution to a fixed time-interval problem.

\begin{theorem}\label{action principle}
Let $q \in \wh{H}_{p_1 p_2}\left([a,b]\right)$ be a critical point of $\mathcal A_{[a,b]}$, i.e.
\[
d\mathcal A_{[a,b]}\left(q\right)[\f]=0 \qquad  \forall \f \in H_0^1\left([a,b]\right).
\]
Then $q$ is a classical solution of
\beq \label{P_[a,b]}
\begin{cases}
\ddot{q}(t) = \n V(q(t)) \qquad &t \in \left[a,b\right],\\
q\left(a\right)=p_1, \qquad q\left(b\right)=p_2.
\end{cases}
\eeq
\end{theorem}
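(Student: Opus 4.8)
The plan is to derive the weak form of the Euler--Lagrange equation from the vanishing of the first variation, and then to bootstrap regularity exploiting the crucial fact that $q$ takes values in $\R^N \setminus \Sigma$, away from the singularities of $V$.

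First I would record the first variation. Since $V \in \mathcal{C}^1(\R^N \setminus \Sigma)$ and $q \in \wh{H}_{p_1 p_2}([a,b])$ takes values in $\R^N \setminus \Sigma$, the functional $\mathcal{A}_{[a,b]}$ is differentiable at $q$ (as already observed before the statement), and for every $\varphi \in H^1_0([a,b])$ one has
\[
d\mathcal{A}_{[a,b]}(q)[\varphi] = \int_a^b \left( \dot q \cdot \dot \varphi + \nabla V(q) \cdot \varphi \right)\,dt = 0 .
\]

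Next I would turn this into a pointwise identity. The key preliminary remark is that, by the one--dimensional Sobolev embedding $H^1([a,b]) \hookrightarrow \mathcal{C}^0([a,b])$, the path $q$ is continuous; hence its image $q([a,b])$ is a compact subset of the open set $\R^N \setminus \Sigma$, so that $\dist(q([a,b]),\Sigma) > 0$ and the function $t \mapsto \nabla V(q(t))$ is continuous, in particular integrable, on $[a,b]$. Setting $G(t) := \int_a^t \nabla V(q(s))\,ds$ and integrating by parts the term $\int_a^b \nabla V(q)\cdot\varphi\,dt = -\int_a^b G \cdot \dot\varphi\,dt$ (the boundary terms vanish because $\varphi(a)=\varphi(b)=0$), the extremality condition becomes
\[
\int_a^b \left( \dot q - G \right) \cdot \dot \varphi \,dt = 0 \qquad \forall \varphi \in H^1_0([a,b]).
\]
By the fundamental lemma of the calculus of variations (Du Bois--Reymond), $\dot q - G$ is constant almost everywhere; thus $\dot q$ coincides a.e. with the absolutely continuous function $G + \mathrm{const}$, and $\ddot q = \dot G = \nabla V(q)$ holds a.e. in $(a,b)$.

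Finally I would bootstrap to classical regularity. Since $t \mapsto \nabla V(q(t))$ is continuous, $G$ is of class $\mathcal{C}^1$, hence so is $\dot q$, and therefore $q \in \mathcal{C}^2([a,b])$ with $\ddot q(t) = \nabla V(q(t))$ at every $t$. The boundary conditions $q(a)=p_1$ and $q(b)=p_2$ hold by the very definition of $\wh{H}_{p_1 p_2}([a,b])$, which proves that $q$ solves \eqref{P_[a,b]} classically. The only genuinely delicate point is the compactness argument ensuring that $q$ stays uniformly away from $\Sigma$, so that $\nabla V(q(\cdot))$ is well defined and continuous; once this is secured, the remainder is the standard variational bootstrap.
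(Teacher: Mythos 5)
Your argument is correct and complete. Note, however, that the paper itself does not prove this statement: Theorem \ref{action principle} appears in the appendix, which the authors explicitly present as a collection of ``known results (without proofs)'' with references to the literature (Ambrosetti--Coti Zelati, Ferrario--Terracini, Venturelli's thesis, Soave--Terracini), so there is no in-paper proof to compare against. What you supply is precisely the standard argument that those references contain: the first variation yields the weak Euler--Lagrange identity, the Du Bois--Reymond lemma upgrades it to $\dot q = G + \mathrm{const}$ a.e.\ with $G(t)=\int_a^t \nabla V(q(s))\,ds$, and the bootstrap gives $q\in\mathcal{C}^2$ with $\ddot q = \nabla V(q)$ pointwise. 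You correctly identify and handle the one point where the singular set matters: since $q\in \wh{H}_{p_1p_2}([a,b])$ takes values in $\R^N\setminus\Sigma$ and is continuous by the one-dimensional Sobolev embedding, its image is a compact subset of the open complement of $\Sigma$, so $\dist(q([a,b]),\Sigma)>0$ and $\nabla V(q(\cdot))$ is continuous and bounded; this is also what justifies the differentiability of $\mathcal{A}_{[a,b]}$ at $q$ along $H^1_0$ directions (small $H^1_0$ perturbations are small in $\mathcal{C}^0$ and hence keep the path in a compact set away from $\Sigma$). The signs are consistent with the paper's convention $\ddot q = \nabla V(q)$ for the action $\int(\tfrac12|\dot q|^2+V(q))$. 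No gaps.
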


The converse of Theorem \ref{action principle} is also true: if $q \in \mathcal{C}^2\left((a,b)\right)$ is a collisions-free solution of \eqref{P_[a,b]}, then $q$ is a critical point of $\mathcal A_{[a,b]}$.\\
In the above statement, if we replace the condition $q \in \wh{H}$ with $q \in H$ we do not obtain a classical solution anymore, because it is possible that a critical point $q$ has some collisions. Of course, trying to use minimization arguments this is a major problem, as $\wh{H}$ is not weakly closed (and consequently in general minimizing sequences do not converge in $\wh{H}$). In general one performs the minimization in some weakly closed set of $H$ and then try to show that, under additional assumptions, the minimizer is collision-free. In this direction, the following result says that, although collision minimizers are not solution of the motion equation, their energy is constant.
\begin{proposition}
If $q \in H$ is a local minimizer of $\mathcal{A}_{[a,b]}$, then
\[
\frac{1}{2} |\dot{q}(t)|^2-V(q(t))= const.  \qquad \text{for a.e.} \ t \in [a,b].
\]
\end{proposition}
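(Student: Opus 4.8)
The plan is to exploit the invariance of the action under time reparametrizations---equivalently, to use inner variations rather than the outer variations that yield the Euler--Lagrange equation. Since the minimizer $q$ may have collisions, I cannot simply integrate the motion equation against test functions; but reparametrizing the time variable moves the path only in time and never alters its image, so the endpoint conditions and any topological constraint defining $H$ are automatically preserved. Morally this is the conservation law attached, via Noether's theorem, to the time-translation symmetry of an autonomous Lagrangian.

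Concretely, I would fix $\phi \in \mathcal{C}^\infty_c((a,b))$ and, for $|\lambda|$ small enough that $|\lambda|\,\|\phi'\|_\infty < 1$, set $s_\lambda(t):=t+\lambda\phi(t)$. As $\phi(a)=\phi(b)=0$, the map $s_\lambda$ is a diffeomorphism of $[a,b]$ onto itself fixing the endpoints, and I put $q_\lambda:=q\circ s_\lambda^{-1}$. Then $q_\lambda(a)=p_1$, $q_\lambda(b)=p_2$, the image of $q_\lambda$ coincides with that of $q$, and $q_\lambda\to q$ in $H^1$ as $\lambda\to 0$; hence $q_\lambda\in H$ and lies, for $\lambda$ small, in the $H^1$-neighbourhood on which $q$ is minimal. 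The change of variables $\tau=s_\lambda^{-1}(t)$ then gives
\[
\mathcal{A}_{[a,b]}(q_\lambda)=\int_a^b\left(\frac{1}{2}\frac{|\dot q(\tau)|^2}{1+\lambda\phi'(\tau)}+V(q(\tau))\bigl(1+\lambda\phi'(\tau)\bigr)\right)d\tau.
\]

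Next I would differentiate in $\lambda$ at $\lambda=0$. Here the finiteness of the action for the minimizer is decisive: it forces $|\dot q|^2\in L^1((a,b))$ and $V(q)\in L^1((a,b))$, and for $|\lambda|$ small one has $1+\lambda\phi'\ge \tfrac12$, so both the integrand and its difference quotients are dominated by integrable functions and differentiation under the integral sign is legitimate in spite of the singularity of $V$. Since $\lambda=0$ is a local minimum of the resulting differentiable map $\lambda\mapsto\mathcal{A}_{[a,b]}(q_\lambda)$, its derivative there vanishes, which reads
\[
\int_a^b\left(-\frac{1}{2}|\dot q(\tau)|^2+V(q(\tau))\right)\phi'(\tau)\,d\tau=0.
\]

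Since this holds for every $\phi\in\mathcal{C}^\infty_c((a,b))$, the function $g:=-\tfrac12|\dot q|^2+V(q)\in L^1((a,b))$ has vanishing distributional derivative, so by the du Bois--Reymond lemma it equals a constant almost everywhere; equivalently $\tfrac12|\dot q|^2-V(q)=\mathrm{const}$ a.e., which is the assertion. I expect the only genuinely delicate points to be the admissibility together with the $H^1$-continuity of the family $(q_\lambda)$, and the dominated-convergence justification near the collisions; both are handled once the finiteness of $\mathcal{A}_{[a,b]}(q)$ is used to place $V(q)$ in $L^1$, after which the computation is routine.
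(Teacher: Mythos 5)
Your proof is correct and follows exactly the route the paper indicates: the appendix states this proposition without proof but remarks that it is ``a consequence of the extremality of $q$ for $\mathcal A_{[a,b]}$ with respect to time re-parameterizations,'' which is precisely your inner-variation argument with $s_\lambda(t)=t+\lambda\phi(t)$. The details you supply (admissibility of $q_\lambda$, domination of the difference quotients by $|\dot q|^2$ and $V(q)$ in $L^1$, and the du Bois--Reymond conclusion) are the standard and correct way to make that remark rigorous.
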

Note that the energy function $t \mapsto  |\dot{q}(t)|^2/2-V(q(t))/\o^2$ a priori is defined only for non-collision times. The previous statement, which is a consequence of the extremality of $q$ for $\mathcal A_{[a,b]}$ with respect to time re-parameterizations, says that if $q$ is a minimizer of $\mathcal A_{[a,b]}$ with a collision, then the energy function can be extended to a constant function (almost everywhere) in the whole time interval $[a,b]$.

\section{Prescribing the energy: the Maupertuis' Principle}

Recall the definition of the Maupertuis' functional: 
\[
\mathcal M_h\left([a,b];\cdot\right):  H_{p_1 p_2}\left([a,b]\right) \to \R \cup \left\lbrace +\infty\right\rbrace, \quad
\mathcal  M_h\left([a,b];q\right) := \frac{1}{2}\int_a^b |\dot{q}(t)|^2 \,dt \int_a^b \left(V(q(t))+h\right)\,dt.
\]
If $\mathcal M_h([a,b];q)>0$ both its factors are strictly positive and it makes sense to set
\beq\label{omega}
\o^2:=\frac{\int_a^b \left(V(q)+h\right)}{\frac{1}{2} \int_a^b |\dot{q}|^2}.
\eeq
The Maupertuis' functional is differentiable in $\wh{H}$, and its critical points, suitably re-parametrized, are solutions to a fixed energy problem.

\begin{theorem}\label{teorema 4.1}
Let $q \in \wh{H}_{p_1 p_2}\left([a,b]\right)$ be a critical point of $\mathcal M_h$ at a positive level, that is,
\[
d\mathcal M_h\left([a,b];q\right)[\f]=0 \quad  \forall \f \in H_0^1\left([a,b]\right), \quad \text{and} \quad \mathcal M_h\left([a,b];q\right)>0;
\]
let $\o$ be given by \eqref{omega}.  Then $x(t):=q(\o t)$ is a classical solution of
\beq \label{P_h}
\begin{cases}
\ddot{x}(t) = \n V(x(t)) \qquad &t \in \left[\frac{a}{\o},\frac{b}{\o}\right],\\
\frac{1}{2}|\dot{x}(t)|^2-V(x(t))=h \qquad  &t \in \left[\frac{a}{\o},\frac{b}{\o}\right],\\
x\left(\frac{a}{\o}\right)=p_1, \qquad x\left(\frac{b}{\o}\right)=p_2,
\end{cases}
\eeq
while $q$ itself is a classical solution of
\beq \label{P_u}
\begin{cases}
\o^2 \ddot{q}(t) = \n V(q(t)) \qquad &t \in [a,b],\\
\frac{1}{2}|\dot{q}(t)|^2-\frac{V(q(t))}{\o^2}=\frac{h}{\o^2} \qquad &t \in [a,b],\\
q(a)=p_1, \qquad q(b)=p_2.
\end{cases}
\eeq
\end{theorem}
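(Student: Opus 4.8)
The plan is to exploit the product structure of the Maupertuis' functional. Write $\mathcal{M}_h(q) = A\cdot B$, where $A := \frac{1}{2}\int_a^b |\dot{q}|^2\,dt$ and $B := \int_a^b (V(q)+h)\,dt$; since $\mathcal{M}_h(q)>0$, both factors are strictly positive, and by \eqref{omega} we have $\omega^2 = B/A$. First I would compute the first variation along an arbitrary $\varphi \in H_0^1([a,b])$, using the product rule:
\[
d\mathcal M_h(q)[\varphi] = B \int_a^b \dot{q}\cdot\dot{\varphi}\,dt + A \int_a^b \nabla V(q)\cdot\varphi\,dt = 0.
\]
Dividing by $A>0$ and recalling $B/A=\omega^2$ gives the weak equation
\[
\omega^2 \int_a^b \dot{q}\cdot\dot{\varphi}\,dt + \int_a^b \nabla V(q)\cdot\varphi\,dt = 0 \qquad \forall\, \varphi \in H_0^1([a,b]).
\]
Since $q \in \wh{H}_{p_1 p_2}$ stays in $\R^N\setminus\Sigma$ and $V\in\mathcal{C}^1(\R^N\setminus\Sigma)$, the map $t\mapsto \nabla V(q(t))$ is continuous; a standard bootstrap then upgrades this weak solution to a classical one, $q\in\mathcal{C}^2((a,b))$, solving $\omega^2\ddot{q}=\nabla V(q)$, which is the first line of \eqref{P_u}.

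Next I would establish the energy identity. Differentiating the candidate energy and using the equation just derived,
\[
\frac{d}{dt}\left(\frac{1}{2}|\dot{q}|^2 - \frac{V(q)}{\omega^2}\right) = \dot{q}\cdot\ddot{q} - \frac{\nabla V(q)\cdot\dot{q}}{\omega^2} = 0,
\]
so this quantity is a constant $E$ on $(a,b)$. To identify $E$, I would integrate over $[a,b]$ and invoke the definition of $\omega$: from $\omega^2 A = B$ one gets $\int_a^b \bigl(\tfrac{\omega^2}{2}|\dot{q}|^2 - V(q) - h\bigr)\,dt = 0$, which after dividing by $\omega^2$ reads $\int_a^b (E - h/\omega^2)\,dt = 0$ and forces $E = h/\omega^2$. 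This is exactly the second line of \eqref{P_u}. I expect this to be the one genuinely delicate point: the correct energy level is pinned down not by the Euler--Lagrange equation alone, but by the specific normalization built into the definition of $\omega^2$.

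Finally, passing to $x(t):=q(\omega t)$ is a direct chain-rule computation. One has $\ddot{x}(t)=\omega^2\ddot{q}(\omega t)=\nabla V(q(\omega t))=\nabla V(x(t))$, giving the first line of \eqref{P_h}, while multiplying the energy identity for $q$ by $\omega^2$ and evaluating at the rescaled time yields $\frac{1}{2}|\dot{x}|^2 - V(x) = h$. The endpoint conditions $x(a/\omega)=q(a)=p_1$ and $x(b/\omega)=q(b)=p_2$ are immediate from $q\in\wh{H}_{p_1 p_2}$. Beyond the energy-level identification, I do not anticipate any real obstacle: the regularity step is routine precisely because $q$ remains at positive distance from $\Sigma$, and the time rescaling is purely algebraic.
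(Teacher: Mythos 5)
Your argument is correct and complete: the product-rule computation of the first variation, the identification $\omega^2=B/A$, the bootstrap to a classical solution (valid since $q\in\wh{H}$ keeps $q([a,b])$ in a compact subset of $\R^N\setminus\Sigma$ where $\nabla V$ is continuous), and the normalization argument pinning the energy constant at $h/\omega^2$ are exactly the standard proof of the Maupertuis principle. The paper itself states this theorem in the appendix without proof, deferring to the cited references, and your proposal reproduces the classical argument found there with no gaps.
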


The converse of Theorem \ref{teorema 4.1} is also true: if $x \in \mathcal{C}^2\left((a',b')\right)$ is a collisions-free solution of \eqref{P_h}, setting $\o=1/(b'-a')$ and $q(t):= x(t/\o)$, it is not difficult to check that $q$ is a classical solution of \eqref{P_u} defined in $[a'/(b'-a'),b'/(b'-a')]=:[a,b]$ and hence a critical point of $\mathcal M_h\left([a,b];\cdot\right)$ at a strictly positive level. Also, the identity
\[
\omega^2=\frac{\int_a^b \left(V(q)+h\right)}{\frac{1}{2}\int_a^b |\dot{q}|^2}
\]
is fulfilled. In the above statement the fact that $q \in \wh{H}$ automatically rules out the possibility that $q$ has a collision. Although collision minimizers are not true critical points of the Maupertuis' functional in $H$, one can recover the conservation of the energy, as for minimizers of the Lagrangian action.

\begin{proposition}\label{conservazione dell'energia}
If $q \in H$ is a local minimizer of $\mathcal M_h$ at a strictly positive level, then
\[
\frac{1}{2} |\dot{q}(t)|^2-\frac{V(q(t))}{\o^2}= \frac{h}{\o^2}  \qquad \text{for a.e.} \ t \in [a,b].
\]
\end{proposition}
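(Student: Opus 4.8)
The plan is to exploit the extremality of $q$ with respect to \emph{inner variations}, i.e.\ time-reparametrizations that fix the endpoints $a,b$. Since $\mathcal M_h$ is a product of two factors, stationarity will produce a pointwise (du Bois--Reymond) identity, and a subsequent global integration will pin down the otherwise free integration constant to be exactly $0$.

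First I would fix $\eta \in \mathcal{C}^\infty_c((a,b))$ and set $\psi_\lambda(t) := t + \lambda\eta(t)$, which for $|\lambda|$ small is an orientation-preserving diffeomorphism of $[a,b]$ fixing both endpoints. Define $q_\lambda := q\circ\psi_\lambda$. Because a reparametrization leaves the image $q([a,b])$ unchanged and preserves $q(a)=p_1$, $q(b)=p_2$, the competitor $q_\lambda$ lies in the same admissible set as $q$ and converges to $q$ in $H^1$ as $\lambda\to 0$; hence the local minimality of $q$ forces
\[
\left.\frac{d}{d\lambda}\mathcal M_h(q_\lambda)\right|_{\lambda=0}=0.
\]
Next I would compute the two factors separately. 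Writing $A(q):=\tfrac12\int_a^b|\dot q|^2$ and $B(q):=\int_a^b(V(q)+h)$, the substitution $s=\psi_\lambda(t)$ gives $A(q_\lambda)=\tfrac12\int_a^b|\dot q(s)|^2\,\phi_\lambda(s)\,ds$ and $B(q_\lambda)=\int_a^b(V(q(s))+h)\,\phi_\lambda(s)^{-1}\,ds$, where $\phi_\lambda(s):=\psi_\lambda'(\psi_\lambda^{-1}(s))$ satisfies $\phi_0\equiv 1$ and $\partial_\lambda\phi_\lambda|_{\lambda=0}=\dot\eta$. Differentiating at $\lambda=0$ yields
\[
\left.\frac{d}{d\lambda}A(q_\lambda)\right|_{\lambda=0}=\frac12\int_a^b|\dot q|^2\dot\eta,\qquad
\left.\frac{d}{d\lambda}B(q_\lambda)\right|_{\lambda=0}=-\int_a^b(V(q)+h)\dot\eta.
\]
Combining these through the product rule, dividing by $A(q)>0$ (the level is strictly positive) and recalling $\o^2=B(q)/A(q)$, I obtain
\[
\int_a^b\left[\frac{\o^2}{2}|\dot q(t)|^2-\bigl(V(q(t))+h\bigr)\right]\dot\eta(t)\,dt=0\qquad\forall\,\eta\in\mathcal{C}^\infty_c((a,b)).
\]
By the du Bois--Reymond lemma the bracketed function is constant a.e., say equal to $E$. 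Finally, integrating the identity $\tfrac{\o^2}{2}|\dot q|^2-V(q)-h=E$ over $[a,b]$ and using $\tfrac12\int|\dot q|^2=A(q)$ together with $\int(V(q)+h)=B(q)=\o^2A(q)$ gives $\o^2A(q)-\o^2A(q)=E(b-a)$, whence $E=0$. Dividing by $\o^2$ returns the asserted conservation law.

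The step I expect to require most care is the rigorous justification of the differentiation under the integral sign in the factor $B$, since $q$ may collide and $V(q)$ is only known to be integrable (not bounded) on $[a,b]$. The point is that $\mathcal M_h(q)<+\infty$ makes $V(q)\in L^1([a,b])$, and multiplying by the bounded, compactly supported $\dot\eta$ together with the uniform (for small $\lambda$) bounds $\tfrac12\le\phi_\lambda\le 2$ supplies an integrable dominating function, so dominated convergence applies; the factor $A$ is unproblematic because $\dot q\in L^2$. This is precisely where it matters that we variate by reparametrization rather than by an additive perturbation, the latter being liable to move $q$ across the singular set $\Sigma$.
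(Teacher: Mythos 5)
Your proof is correct and takes precisely the route the paper intends: the proposition appears in the appendix of results stated without proof, but the paper repeatedly attributes the conservation of energy for (possibly colliding) minimizers to ``the extremality of $q$ with respect to time re-parametrizations keeping the ends fixed,'' which is exactly your inner-variation argument, completed correctly by the du Bois--Reymond lemma and the normalization $E=0$ coming from the definition of $\o^2$. Your attention to dominated convergence for the factor involving $V(q)\in L^1([a,b])$ is the right point to be careful about, and the argument goes through as written.
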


\paragraph{The Jacobi metric.} Solutions of the fixed energy problem \eqref{P_h} can be obtained, after a suitable re-parametrization, also as non-constant critical points of the functional
\[
\mathcal L_h(q)= \mathcal L_h\left([a,b];q\right):= \int_a^b \sqrt{\left(V(q(t))+h\right)} |\dot{q}(t)|    \,dt,
\]
which is defined and differentiable for those $q \in H_{p_1 p_2}\left([a,b]\right)$ such that $V(q(t)) > -h$ for every $t \in [a,b]$. We set
\[
H_h=H_h^{p_1 p_2}\left([a,b]\right):= \left\lbrace q \in H : V(q(t)) > -h, |\dot{q}(t)|>0 \text{ for every $t \in [a,b]$}\right\rbrace;
 \]
the domain of $\mathcal{L}_h$ is the closure of $H_h^{p_1 p_2}\left([a,b]\right)$ in the weak topology of $H^1$.

\begin{theorem}\label{teorema su L}
Let $q \in H_h^{p_1 p_2}\left([a,b]\right) \cap \wh{H}_{p_1 p_2}\left([a,b]\right)$ be a non-constant critical point of $\mathcal L_h\left([a,b];\cdot\right)$. Then there exist a re-parametrization $x$ of $q$ which is a classical solution of $\eqref{P_h}$ in a certain time-interval $[0,T]$.
\end{theorem}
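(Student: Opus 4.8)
The plan is to exploit the invariance of $\mathcal{L}_h$ under orientation-preserving reparametrizations of time: its critical points are the geodesics of the Jacobi metric $(V(q)+h)\,|dq|^2$, and the whole point of the reparametrization is to turn such a geodesic into a genuine solution of the mechanical system with the prescribed energy $h$. First I would record that, since $q \in H_h^{p_1 p_2}([a,b])$, one has $V(q(t)) > -h$ and $|\dot q(t)| > 0$ for every $t$; in particular $q$ avoids the singular set $\Sigma$, so the Lagrangian $L(q,\dot q)=\sqrt{V(q)+h}\,|\dot q|$ is smooth along $q$ and the weak criticality condition $d\mathcal{L}_h(q)[\varphi]=0$ for all $\varphi \in H^1_0$ is the classical Euler--Lagrange equation. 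A standard bootstrap then upgrades $q$ to a $\mathcal{C}^2$ curve on $(a,b)$.

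Next I would write out this Euler--Lagrange equation explicitly. With $\nabla_v L = \sqrt{V(q)+h}\,\dot q/|\dot q|$ and $\nabla_q L = \nabla V(q)\,|\dot q|/(2\sqrt{V(q)+h})$, criticality reads
\[
\frac{d}{dt}\left(\sqrt{V(q)+h}\,\frac{\dot q}{|\dot q|}\right) = \frac{\nabla V(q)}{2\sqrt{V(q)+h}}\,|\dot q| \qquad \text{in }(a,b).
\]
This is the geodesic equation, and, being reparametrization invariant, it constrains only the \emph{image} of $q$, not its speed.

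The key step is to fix the speed by the energy-normalizing reparametrization. I would introduce the increasing change of variable
\[
s(t):=\int_a^t \frac{|\dot q(\tau)|}{\sqrt{2\,(V(q(\tau))+h)}}\,d\tau, \qquad T:=s(b),
\]
which is well defined and of class $\mathcal{C}^1$ precisely because $V(q)+h>0$ and $|\dot q|>0$ on $[a,b]$; letting $t(s)$ be its inverse and setting $x(s):=q(t(s))$, a direct computation gives $|x'(s)|^2 = |\dot q(t(s))|^2\,(dt/ds)^2 = 2\,(V(q(t(s)))+h)$, i.e. the energy relation $\tfrac12|x'|^2 - V(x)=h$ holds by construction, together with $x(0)=p_1$ and $x(T)=p_2$. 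Substituting $|x'|=\sqrt{2}\,\sqrt{V(x)+h}$ into the Euler--Lagrange equation written in the variable $s$ (where it still holds by invariance) makes the prefactors collapse: the left-hand side reduces to $x''/\sqrt2$ and the right-hand side to $\nabla V(x)/\sqrt2$, so that $x''=\nabla V(x)$. Hence $x$ solves \eqref{P_h} on $[0,T]$, which is the assertion.

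I expect the main obstacle to be the rigorous handling of the reparametrization rather than the formal computation: one must verify that $s(\cdot)$ is a genuine $\mathcal{C}^1$ diffeomorphism onto $[0,T]$, so that $x$ is an admissible reparametrization and the Euler--Lagrange equation transports correctly, and that the weak criticality condition may indeed be read as the pointwise equation above along the collision-free curve $q$. A cleaner equivalent route that sidesteps differentiating $\dot q/|\dot q|$ is to invoke the Cauchy--Schwarz inequality $\mathcal{L}_h(q)^2 \le 2\,\mathcal{M}_h(q)$, with equality if and only if $V(q)+h$ is proportional to $|\dot q|^2$; the normalization above is exactly the equality case, so the reparametrized curve is a critical point of $\mathcal{M}_h$ at a positive level, and Theorem \ref{teorema 4.1} then produces the solution of \eqref{P_h} directly.
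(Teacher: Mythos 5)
The paper states Theorem \ref{teorema su L} in its appendix explicitly \emph{without proof} (the appendix announces that it ``collects known results (without proofs)'', deferring to \cite{AmCZ,FeTe,Vethesis,SoTe}), so there is no internal argument to compare against; your proposal is the standard Jacobi--Maupertuis correspondence used in those references, and its core computation is correct. With $s(t)=\int_a^t |\dot q|/\sqrt{2(V(q)+h)}\,d\tau$ one indeed gets $|x'|^2=2(V(x)+h)$, i.e.\ the energy relation of \eqref{P_h}, and substituting $|x'|=\sqrt{2}\sqrt{V(x)+h}$ into the reparametrization-invariant geodesic equation collapses it to $x''=\nabla V(x)$. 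Two points deserve more care than you give them. First, the claim that a ``standard bootstrap'' makes $q$ itself $\mathcal{C}^2$ is too strong: the Lagrangian $\sqrt{V(q)+h}\,|v|$ is degenerate (its Hessian in $v$ vanishes along $v$), so the weak criticality condition only yields that $w:=\sqrt{V(q)+h}\,\dot q/|\dot q|$ is absolutely continuous with $w'=\nabla V(q)\,|\dot q|/\bigl(2\sqrt{V(q)+h}\bigr)$ a.e., while $|\dot q|$ may remain merely $L^2$; the correct order is to derive this weak identity, then reparametrize (using $|\dot q|>0$ a.e.\ so that $s$ is strictly increasing and absolutely continuous with absolutely continuous inverse), and only then bootstrap regularity for $x$ --- you flag this obstacle yourself, which is appropriate. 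Second, your cleaner alternative route through $\mathcal{L}_h^2\le 2\mathcal{M}_h$ needs one extra observation to pass from ``critical point of $\mathcal{L}_h$ normalized to the equality case'' to ``critical point of $\mathcal{M}_h$'': the paper only records the minimizer version of this transfer (Proposition \ref{minimo M<->L}). The gap is easily filled --- since $2\mathcal{M}_h-\mathcal{L}_h^2\ge 0$ on the relevant open set and vanishes at the normalized path, its differential vanishes there, whence $d(2\mathcal{M}_h)(q)=2\mathcal{L}_h(q)\,d\mathcal{L}_h(q)=0$ and Theorem \ref{teorema 4.1} applies --- but it should be stated.
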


The functional $\mathcal L_h$ has an important geometric meaning: the value $\mathcal L_h(q)$ is the length of the curve parametrized by $q \in H_h$ with respect to the \emph{Jacobi metric}:
\[
g_{ij}(q):=\left(V(q)+h\right) \d_{ij}, \quad
\text{where $\d_{ij}$ is the Kronecker delta}. 
\]
This metric makes the Hill's region $\{V(q)+h>0\}$ a Riemannian manifold. Since $\mathcal L_h$ is a length, it is invariant under re-parametrization.

\paragraph{Relationship between $\mathcal L_h$ and $\mathcal M_h$.}

It is possible to establish a correspondence between minimizers of $\mathcal M_h$ at positive level and minimizers of $\mathcal L_h$. For every $q \in H_h$
\beq\label{5 Maupertuis}
\mathcal L_h^2(q) = \left( \int_a^b \sqrt{\left(V(q)+h\right)}|\dot{q}|\right)^2 \leq \int_a^b |\dot{q}|^2 \int_a^b \left(V(q)+h\right) = 2 \mathcal M_h(q),
\eeq
with equality if and only if there exists $\l \in \R$ such that 
\[
|\dot{q}(t)|^2=\l \left(V(q(t))+h\right) \qquad \text{for almost every $t \in [a,b]$}.
\]
Starting from this fact it is not difficult to show that minimizers of $\mathcal M_h$ "are" minimizers of $\mathcal L_h$. To be precise:

\begin{proposition}\label{minimo M<->L}
Let $q \in H_h \cap H$ be a non-constant minimizer of $\mathcal M_h$. Then $q$ is a minimizer of $\mathcal L_h$ in $H_h \cap H$.
If $q \in H_h \cap H$ is a non-constant minimizer of $\mathcal L_h$ then, up to a re-parametrization, $q$ is a minimizer of $\mathcal M_h$ on $H_h \cap H$.
\end{proposition}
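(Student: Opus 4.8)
The plan is to leverage the inequality \eqref{5 Maupertuis}, $\mathcal L_h^2(q) \le 2\mathcal M_h(q)$, recalling that it becomes an equality exactly when $|\dot q|^2 = \lambda(V(q)+h)$ almost everywhere, and to exploit the fact that $\mathcal L_h$ is invariant under reparametrization while $\mathcal M_h$ is not. The central device I would introduce is the \emph{constant Jacobi-speed reparametrization}: for a non-constant $p \in H_h \cap H$, set
\[
\ell(t) := \int_a^t \sqrt{V(p(s))+h}\,|\dot p(s)|\,ds,
\]
and reparametrize time via $t \mapsto a + (b-a)\ell(t)/\mathcal L_h(p)$. Since $p \in H_h$ forces $V(p)+h>0$ and $|\dot p|>0$ almost everywhere, this is a strictly increasing absolutely continuous change of variable fixing $[a,b]$ and the endpoints; the reparametrized curve $\hat p$ has the same image as $p$, lies again in $H_h \cap H$, and satisfies $\sqrt{V(\hat p)+h}\,|\dot{\hat p}| \equiv \mathrm{const}$. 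Hence equality holds in \eqref{5 Maupertuis} for $\hat p$, i.e. $\mathcal L_h^2(\hat p) = 2\mathcal M_h(\hat p)$, while $\mathcal L_h(\hat p) = \mathcal L_h(p)$ by invariance.

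For the first implication I would argue as follows. Let $q$ minimize $\mathcal M_h$ and let $\tilde q \in H_h \cap H$ be arbitrary; reparametrize it to $\hat q$ as above. Chaining
\[
\mathcal L_h^2(q) \le 2\mathcal M_h(q) \le 2\mathcal M_h(\hat q) = \mathcal L_h^2(\hat q) = \mathcal L_h^2(\tilde q),
\]
where the first step is \eqref{5 Maupertuis}, the second is the minimality of $q$, and the equalities come from the construction, gives $\mathcal L_h(q) \le \mathcal L_h(\tilde q)$. As $\tilde q$ is arbitrary, $q$ minimizes $\mathcal L_h$.

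For the converse, let $q$ minimize $\mathcal L_h$ and let $\hat q$ be its constant Jacobi-speed reparametrization, which carries the same $\mathcal L_h$-value and so also minimizes $\mathcal L_h$. For any competitor $p \in H_h \cap H$ I would write
\[
2\mathcal M_h(\hat q) = \mathcal L_h^2(\hat q) = \mathcal L_h^2(q) \le \mathcal L_h^2(p) \le 2\mathcal M_h(p),
\]
using equality in \eqref{5 Maupertuis} for $\hat q$, the minimality of $q$ for $\mathcal L_h$, and \eqref{5 Maupertuis} again for $p$. Thus $\hat q$, a reparametrization of $q$, minimizes $\mathcal M_h$.

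The algebra above is immediate; the only delicate point — and the main obstacle — is the construction and regularity of the reparametrization. I would need to verify that the Jacobi arclength $\ell$ is strictly increasing and absolutely continuous (guaranteed by $V(p)+h>0$ and $|\dot p|>0$ on the non-constant curve $p$), that its normalized inverse yields again an $H^1$ curve with positive speed, and that membership in $H_h \cap H$ is preserved. Since only the time parametrization is altered, monotonically and with fixed endpoints, the pointwise constraints $V(\cdot)+h>0$ and positivity of the speed transfer together with the boundary conditions; checking that the reparametrized map remains of class $H^1$ is the genuine technical core.
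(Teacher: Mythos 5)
Your overall strategy is the one the paper intends (the appendix states the proposition without proof, but the sentence following \eqref{5 Maupertuis} indicates exactly this route: combine the Cauchy--Schwarz inequality $\mathcal L_h^2 \le 2\mathcal M_h$, its equality case, and the reparametrization invariance of $\mathcal L_h$). However, there is a genuine error in your key construction. You reparametrize by normalized Jacobi \emph{arclength}, $d\tau \propto \sqrt{V(p)+h}\,|\dot p|\,dt$, which produces a curve with $\sqrt{V(\hat p)+h}\,|\dot{\hat p}|\equiv \mathrm{const}$. But the equality case of \eqref{5 Maupertuis} stated in the paper is $|\dot q|^2=\lambda\bigl(V(q)+h\bigr)$ a.e., i.e.\ the \emph{ratio} $|\dot{\hat p}|/\sqrt{V(\hat p)+h}$ must be constant, not the product. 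For your $\hat p$ one computes
\[
2\mathcal M_h(\hat p)=\frac{\mathcal L_h(p)^2}{(b-a)^2}\int_a^b\frac{d\tau}{V(\hat p)+h}\int_a^b\bigl(V(\hat p)+h\bigr)\,d\tau\;\ge\;\mathcal L_h(p)^2,
\]
with equality only when $V(\hat p)+h$ is constant along the curve; so the central identity $\mathcal L_h^2(\hat p)=2\mathcal M_h(\hat p)$ on which both chains of inequalities rest is false in general, and both implications break down as written.

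The repair is straightforward: replace your $\ell$ by $\ell(t):=\int_a^t |\dot p(s)|/\sqrt{V(p(s))+h}\,ds$ and reparametrize via $\tau=a+(b-a)\ell(t)/\ell(b)$. This is the parametrization in which the energy relation of Proposition \ref{conservazione dell'energia} holds, it gives $|\dot{\hat p}(\tau)|=\frac{\ell(b)}{b-a}\sqrt{V(\hat p(\tau))+h}$, and then $\mathcal L_h(\hat p)=\frac{\ell(b)}{b-a}\int_a^b(V(\hat p)+h)\,d\tau$ while $2\mathcal M_h(\hat p)=\frac{\ell(b)^2}{(b-a)^2}\bigl(\int_a^b(V(\hat p)+h)\,d\tau\bigr)^2$, so equality in \eqref{5 Maupertuis} does hold. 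All the technical points you flagged go through for this corrected change of variable: $\ell$ is strictly increasing and absolutely continuous since $|\dot p|>0$ a.e.\ and $V(p)+h$ is bounded above and below by positive constants on the compact image, its inverse is again absolutely continuous, and the new curve lies in $W^{1,\infty}\cap H_h\cap H$ because $|\dot{\hat p}|$ is proportional to $\sqrt{V(\hat p)+h}$, hence bounded and bounded away from zero. With this single substitution your two chains of inequalities become correct.
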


The previous proposition is crucial because, in contrast with $\mathcal M_h$, \emph{the functional $\mathcal L_h$ is additive}. This permits to prove the following result.

\begin{proposition}\label{localizzazione dei minimi_M}
Let $u \in H_{p_1 p_2}\left([a,b]\right)$ be a minimizer of $\mathcal L_h\left([a,b];\cdot\right)$, let $[c,d] \subset [a,b]$. Then $u|_{[c,d]}$ is a minimizer of $\mathcal L_h\left([c,d];\cdot\right)$ in $H_{u(c) u(d)}\left([c,d]\right)$. Moreover,
if $u$ is a minimizer of $\mathcal M_h\left([a,b];\cdot\right)$ in $H_{p_1 p_2}\left([a,b]\right)$, then, for any  subinterval $[c,d] \subset [a,b]$, the restriction $u|_{[c,d]}$ is a minimizer of $\mathcal M_h\left([c,d];\cdot\right)$ in $H_{u(c) u(d)}([c,d])$.
\end{proposition}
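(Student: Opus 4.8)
The plan is to prove the two assertions in turn, establishing the localization for the Jacobi length $\mathcal{L}_h$ first and then transferring it to the Maupertuis' functional through the correspondence of Proposition \ref{minimo M<->L}.

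For the statement on $\mathcal{L}_h$, the decisive structural feature is that this functional is \emph{additive}, so that $\mathcal{L}_h([a,b];u) = \mathcal{L}_h([a,c];u) + \mathcal{L}_h([c,d];u) + \mathcal{L}_h([d,b];u)$. I would argue by contradiction: if $u|_{[c,d]}$ failed to minimize $\mathcal{L}_h([c,d];\cdot)$ in $H_{u(c)u(d)}([c,d])$, there would be a competitor $v$ with the same endpoints, $v(c)=u(c)$ and $v(d)=u(d)$, and $\mathcal{L}_h([c,d];v) < \mathcal{L}_h([c,d];u|_{[c,d]})$. The concatenation
\[
\hat u(t) := \begin{cases} u(t) & t \in [a,c) \cup (d,b] \\ v(t) & t \in [c,d] \end{cases}
\]
lies in $H_{p_1 p_2}([a,b])$ because the endpoints match, and additivity immediately gives $\mathcal{L}_h([a,b];\hat u) < \mathcal{L}_h([a,b];u)$, contradicting the global minimality of $u$.

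For the statement on $\mathcal{M}_h$, I would combine three ingredients. First, by Proposition \ref{minimo M<->L} the $\mathcal{M}_h$-minimizer $u$ is also an $\mathcal{L}_h$-minimizer on $[a,b]$, whence by the previous paragraph $u|_{[c,d]}$ minimizes $\mathcal{L}_h([c,d];\cdot)$. Second, I would invoke the conservation of energy for $u$, Proposition \ref{conservazione dell'energia}, which yields the pointwise identity $|\dot u(t)|^2 = \frac{2}{\o^2}(V(u(t))+h)$ for almost every $t$; this is precisely the equality condition in the Cauchy--Schwarz inequality \eqref{5 Maupertuis}, and because it holds pointwise it is inherited by the subinterval, so that $\mathcal{L}_h([c,d];u|_{[c,d]})^2 = 2\,\mathcal{M}_h([c,d];u|_{[c,d]})$. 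Third, for an arbitrary competitor $w \in H_{u(c)u(d)}([c,d])$ the inequality \eqref{5 Maupertuis} gives $\mathcal{L}_h([c,d];w)^2 \le 2\,\mathcal{M}_h([c,d];w)$. Chaining these,
\[
2\,\mathcal{M}_h([c,d];u|_{[c,d]}) = \mathcal{L}_h([c,d];u|_{[c,d]})^2 \le \mathcal{L}_h([c,d];w)^2 \le 2\,\mathcal{M}_h([c,d];w),
\]
which is exactly the minimality of $u|_{[c,d]}$ for $\mathcal{M}_h$.

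The crux of the argument, and its only delicate point, is the second ingredient: the equality case in \eqref{5 Maupertuis} must survive restriction. This is not true for a generic path, and it is available here only because the equipartition $|\dot u|^2 \propto (V(u)+h)$ comes from the constancy of the energy, hence holds at almost every individual instant rather than merely in the averaged sense over $[a,b]$; equivalently, the factor $\o^2$ computed on any subinterval equals the one computed on $[a,b]$. Beyond this, I would only need to verify the standing hypotheses of Proposition \ref{minimo M<->L} — that $u$ is non-constant and remains in the Hill's region — and that $u(c)$ and $u(d)$ are non-collision points, so that $H_{u(c)u(d)}([c,d])$ is the appropriate target space for the competitors.
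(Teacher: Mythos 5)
Your argument is correct and is precisely the one the paper intends: the appendix states this proposition without proof, but the surrounding remarks single out exactly your two ingredients, namely the additivity of $\mathcal L_h$ (used via concatenation with a hypothetical better competitor) and the fact that energy conservation makes the equality case of \eqref{5 Maupertuis} hold pointwise, hence on every subinterval (compare \eqref{oss su omega}). Your closing caveats (non-constancy, confinement in the Hill's region so that $\mathcal L_h$ and the Cauchy--Schwarz comparison are available for the competitors) are the right ones to check and are satisfied in all the situations where the proposition is invoked in the body of the paper.
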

\footnotesize

\end{document}